\documentclass[a4paper,11pt,reqno,a4paper,oneside]{amsart}

\usepackage{mathtools,amsmath,amssymb,amsfonts,amsthm,mathrsfs}
\usepackage[pdftex]{graphicx}
\usepackage{graphicx}
\mathtoolsset{showonlyrefs}
 \pdfoutput=1

\DeclareGraphicsExtensions{.bmp,.png,.pdf,.jpg,}

\usepackage{bm}
\usepackage{verbatim}
\usepackage{color}
\usepackage{mathrsfs}
\usepackage{pgfplots}

\textwidth=16cm
\oddsidemargin=0cm
\evensidemargin=0cm

\newcommand{\RR}{\mathbb R}
\newcommand{\NN}{\mathbb N}

\newcommand{\sgn}{{ \rm sgn }}
\newcommand{\esssup}{{\rm ess \;sup\,}}
\newcommand{\essinf}{{\rm ess \;inf\,}}

\newcommand{\limess}[1]%
{

\begin{array}[t]{c}
{\rm ess\, lim}\\
{\scriptstyle #1}
\end{array}

}

\newcommand{\supess}[1]%
{

\begin{array}[t]{c}
{\rm ess\, sup}\\
{\scriptstyle #1}
\end{array}

}
\newcommand{\infess}[1]%
{

\begin{array}[t]{c}
{\rm ess\, inf}\\
{\scriptstyle #1}
\end{array}

}

\newcommand{\JJ}{{\mathcal J}}

\def\R{{\mathbb R}}
\def\N{{\mathbb N}}

\def\a{\alpha}
\def\b{\beta}

\def\d{\delta}

\def\r{\varrho}

\def\s{\sigma}

\def\l{\lambda}
\def\p{\partial}
\def\O{\Omega}
\def\e{\varepsilon}
\def\v{\varphi}

\def\o{\omega}
\def\mc{\mathcal}
\def\mf{\mathfrak}

\numberwithin{equation}{section}

\theoremstyle{definition}

\theoremstyle{plain}
\newtheorem{theorem}{Theorem}[section]
\newtheorem{proposition}{Proposition}[section]
\newtheorem{lemma}{Lemma}[section]
\newtheorem{corollary}{Corollary}[section]

\theoremstyle{definition}
\newtheorem{rem}{Remark}[section]



\begin{document}

\title[Regular versus singular solutions]
{Regular versus singular solutions in quasilinear indefinite problems with
sublinear potentials}

\thanks{J. L\'{o}pez-G\'{o}mez has been  supported by the Research Grant PGC2018-097104-B-100 of the Spanish Ministry of Science, Innovation and Universities and by the Institute of Interdisciplinar Mathematics of Complutense University. \\ \indent P. Omari has been  supported by  ``Universit\`a degli Studi di Trieste--Finanziamento di Ateneo per Progetti di Ricerca Scientifica''.
This research has been performed under the auspices of INdAM-GNAMPA}

\author{Juli\'{a}n L\'{o}pez-G\'{o}mez}
\address{Juli\'{a}n L\'{o}pez-G\'{o}mez:
Instituto Interdisciplinar de Matem\'aticas,
Universidad Complutense de Madrid,
Madrid 28040, Spain}
\email{julian@mat.ucm.es}
\author{Pierpaolo Omari}
\address{Pierpaolo Omari:
Dipartimento di Matematica e Geoscienze,
Universit\`a degli Studi di Trieste,
Via A. Valerio 12/1,
34127 Trieste, Italy}
\email{omari@units.it}

\vspace{0.1cm}

\begin{abstract}
The aim of this paper is analyzing  existence,   multiplicity, and regularity issues   for   the positive solutions of the quasilinear Neumann problem
\begin{equation*}
   \left\{ \begin{array}{ll}
   \displaystyle
   - \big(  {u'}/{\sqrt{1+(u')^2}} \, \big)' =\l a(x) f(u), & \quad 0<x<1,
\\ [1mm]
 u'(0)=u'(1)=0. & \end{array}\right.
\end{equation*}
Here, $\displaystyle   \big({u'}/{\sqrt{1+(u')^2 }}\, \big)' $ is the one-dimensional curvature operator,  $\l\in\R$ is     a   parameter, the weight $a(x) $ changes sign, and,  in most occasions,  the function $f (u)$ has a sublinear potential $F(u)$ at $\infty$. Our discussion displays   the manifold  patterns  occurring for   these solutions,  depending on  the behavior   of the potential $F(u)$ at $u=0$,  and, possibly, at infinity, and of the weight function $a(x)$ at its nodal points.
\end{abstract}

\vspace{0.3cm}

\subjclass[2010]{Primary: 35J93, 34B18.  Secondary: 35J15, 35B09, 35B32,  35A15,  35A16.}

\vspace{0.1cm}

\keywords{Quasilinear problem, curvature operator, Neumann boundary condition,  bounded variation solution, regular solution,  singular solution, positive solution, indefinite weight,  bifurcation, topological degree,  sub- and supersolutions, variational methods}
\vspace{0.1cm}

\date{\today}

\maketitle

\section{Introduction}
\label{s1}

\noindent The main aim of this paper is analyzing the set of  positive solutions, regular or singular, of the quasilinear Neumann problem
\begin{equation}
\label{1.1}
   \left\{ \begin{array}{ll}
   \displaystyle
   -\left( \frac{u'}{\sqrt{1+(u')^2}}\right)' =\l a(x) f(u), & \quad 0<x<1, \\[1ex]
   u'(0)=u'(1)=0. & \end{array}\right.
\end{equation}
 Here, $\displaystyle   \big({u'}/{\sqrt{1+(u')^2 }} \, \big)' $ is the one-dimensional curvature operator, $\l\in\R$ is viewed as a parameter, and the functions $a(x) $ and $f (u)$ are generally supposed to satisfy
\smallskip
\begin{itemize}
\it
\item[$(a_1)$] $a\in  L^1(0,1)$, $\displaystyle \int_0^1 a(x)\,dx <0$, and $a(x)>0$ on a set of positive measure,
\end{itemize}
\smallskip
and,  respectively,
\smallskip
\begin{itemize}
\it
\item[$(f_1)$]
$f\in \mc{C}(\R)$,  $f(0)=0$, $f(u)>0$ if $u>0$, and, for some constants $ h>0 $, $p>0$ and $q\in (0,1)$,
\begin{equation}
\label{1.2}
   \lim_{u\to 0^+}\frac{f(u)}{u^{p}}=1\quad \hbox{ and } \quad \lim_{u\to \infty}\frac{f(u)}{u^{-q}}=h.
\end{equation}
\end{itemize}
\smallskip
 Obviously, if $\lim_{u\to 0^+}\frac{f(u)}{u^{p}}=h_0>0$, then, by replacing $f(u)$ with $g(u) = f(u)/h_0$ and $\l$ with $\mu= h_0\l $,  the first condition of \eqref{1.2} is always met; in particular, $f'(0^+)=1$ if $p=1$.
\par
By a  \emph{regular}
solution of \eqref{1.1}, we mean a function $u \in W^{2,1}(0,1),$ which fulfills, for some $\lambda \in \RR$, the equation a.e. in $(0,1)$, as well as  the boundary conditions. It is evident that  $u$ is a regular solution of \eqref{1.1} if and only if it satisfies
\begin{equation}
\label{1.3}
   \left\{ \begin{array}{ll} -u'' = \l a(x)f(u)g(u'), & \quad 0<x<1, \\
  u'(0)=u'(1)=0, & \end{array}\right.
\end{equation}
where
\begin{equation}
\label{1.4}
  g(v)=(1+v^2)^\frac{3}{2}.
\end{equation}
In this paper we will also use, as we previously did in \cite{LOR1, LOR2, LGO-19, LGO-JDE, LGO-20}, the notion of bounded variation solution of \eqref{1.1}. A function $u  \in BV(0,1)$ is  said to be a \emph{bounded variation} solution of    \eqref{1.1} if the next identity holds
\begin{align}
\label{1.5}
\int_0^1 \frac{Du^a  D\phi^a}{\sqrt{1+ (Du^a)^2}}\,dx
+\int_0^1  \frac{Du^s}{ |Du^s | }   \, D\phi^s
=\int_0^1  \l a f(u)   \phi\,dx
\end{align}
for every $\phi \in BV(0,1)$ such that $ |D\phi ^{s}|$ is absolutely continuous with respect to
 $|Du^{s}|$. Here, for any $v\in BV(0,1)$, $ Dv =  {D v}^a  dx +  {D v}^s$ is the Lebesgue--Nikodym decomposition, with respect to the Lebesgue measure $dx$ in $\RR$,  of the Radon measure $Dv$ in its absolutely continuous part $  {Dv}^a dx$, with density function ${Dv}^a$,  and  its singular part  ${D v}^s $. Further,  $ \frac{ {Dv}^s}{ |{Dv}^s|}$  denotes the density function of  ${Dv}^s$ with respect to its absolute variation $|{Dv}^s|$.  We refer, e.g.,  to  \cite{Anz83, AmFuPa, LOR1} for additional details on these concepts.
\par
It is   apparent that   any regular solution  is a bounded variation solution.  When a bounded variation solution is not regular, it is called \emph{singular}. Such  solutions may exhibit jumps and, in principle, even more complex behaviors. Throughout this paper, all solutions will be bounded variation solutions, even if not emphasized explicitly.
\par
A solution $u$ of \eqref{1.1} is  said  to be \emph{positive} if $\essinf u \ge 0$ and $\esssup u >0$,  whereas it is said  \emph{strictly positive} if  $\essinf u > 0$.   It is also said  that a pair $(\lambda, u)$ is a positive, or  strictly positive, solution of \eqref{1.1} if $u$ is a positive, or  strictly positive,  solution of \eqref{1.1}, respectively,  for some $\lambda\geq 0$. As this paper focuses attention on positive solutions, all solutions through it will be understood to be positive.
\par
Throughout this paper,  we
denote by $\mc{S}_r^+$ the set of couples $ (\lambda, u)\in[0,\infty)\times\mc{C}^1[0,1]$ such that $(\l,u)$ is a positive regular solution of \eqref{1.1}, together with $(0,0)$ and $(\l_0,0)$, its two possible bifurcation points from the trivial line  $(\l,0)$, $\l\in\RR$. Similarly, we denote by $\mc{S}_{bv}^+$ the set of couples $ (\lambda, u)  \in [0,\infty) \times BV(0,1)$ such that $(\l,u)$ is a positive (bounded variation)  solution of \eqref{1.1}, together with $(0,0)$ and $(\l_0,0)$. By definition, the set of singular positive solutions of \eqref{1.1}, denoted by $\mc{S}_s^+$, is given by
$\mc{S}_s^+=\mc{S}_{bv}^+\setminus \mc{S}_r^+$.
\par
Let us observe that,
according to \eqref{1.2}, the   potential of $f(u)$, defined by $F(u)= \int_0^u f(s)\,ds$
for all $u\in \R$, satisfies
\begin{equation}
\label{Fp}
  \lim_{u\to0^+} \frac{F(u)}{u^{p+1}}
  =\lim_{u\to 0^+} \frac{f(u)}{(p+1) u^p}=
   \frac{1}{p+1}>0
\end{equation}
and, hence, it is \emph{quadratic} at zero if $p=1$, \emph{subquadratic} if $0<p<1$,  and \emph{superquadratic} if $p>1$. Similarly, we have that
\begin{equation}
\label{Fq}
    \lim_{u\to  \infty} \frac{F(u)}{u^{1-q}}
    =\lim_{u\to \infty}\frac{f(u)}{(1-q)u^{-q}}
    =\frac{h}{1-q}>0
\end{equation}
and, therefore, $F(u)$ is \emph{sublinear} at infinity, because $0<1-q<1$.
\par
As the main goal of this paper is analyzing   the existence and the  interplay between the regular and the singular solutions of \eqref{1.1} under  $(f_1)$, this work can be viewed as a natural continuation of \cite{LOR1, LOR2, LGO-19, LGO-JDE, LGO-20} to cover the case where $F(u)$ is sublinear at infinity.
There are strong motivations for studying this problem. A rather thorough discussion is presented in \cite{LOR1, LOR2, LGO-19, LGO-JDE, LGO-20}, together with a wide list of relevant references, including
\cite{LU, Se,  BDGM, Te, CF, Fi, Em, Giu, Ge, Ge85,  GMT, Hu85, Ge85, NS2, CaDMLePa, Se88, Na, CZ91, Ma, CMM96, KuRo, BuGr, Le, BoHaObOmJDE, BoHaObOmTS, ObOmDIE,  ObOmDCDS, ObOmRi, COZ, CDCO,  CDCOOS, ObOm20}.
\par
Note that the condition \eqref{1.2} implies that
\begin{equation}
\label{1.7}
\lim_{u\to \infty}f(u)=0
\end{equation}
and hence $\|f\|_\infty =\max_{u\geq 0} f(u) <\infty$. For the validity of some of the results found in this paper we shall however impose a stronger condition than $(f_1)$. Since $f\in \mc{C}(\R)$ and $f(0)=0$ it follows from \eqref{1.7} that there exists a maximal $M>0$ such that $f(M)=\|f\|_\infty$. The next assumption incorporates into $(f_1)$ the monotonicity of $f(u)$ on each of the intervals $(0,M)$ and $(M,\infty)$:

\smallskip
\begin{itemize}
\it
\item[$(f_2)$] $f(u)$ satisfies { $(f_1)$}, $f \in \mc{C}^1[M,\infty)$,  and it is increasing in $(0,M)$ and decreasing in $(M,\infty)$.
\end{itemize}
\smallskip
For any given $p>0$, $q \in (0,1)$ and $M>0$, the  function
\begin{equation}
\label{1.9}
  f(u)= \left\{ \begin{array}{ll} u^p & \quad \hbox{if} \;\; 0 \leq u \leq M,\\[1ex]
  \tfrac{M^{p+q}}{u^q}
  & \quad \hbox{if} \;\; u > M
\end{array}\right.
\end{equation}
provides us with a paradigmatic example of function satisfying $(f_2)$. By regularizing it around $M$ it is very easy
to construct a family of functions satisfying $(f_2)$, with the same shape as \eqref{1.9} at $u=0$ and $u=\infty$, and such that $f \in\mc{C}^1(0,\infty)$. In some cases, when using bifurcation methods,  more regularity will be   necessary, as to require that
\smallskip
\begin{itemize}
\it
\item[$(f_3)$] $f(u) $  satisfies  $f\in  \mc{C}^1(\RR)$, $f(0)=0$, $f'(0)=1$, $f(u)>0$ if $u>0$, and, for some constants $ h>0 $ and $q\in (0,1)$, $\lim_{u\to \infty}\frac{f'(u)}{-q u^{-q-1}}=h>0$ holds.
\end{itemize}
\smallskip
Moreover, in some circumstances we will   replace  $(a_1)$ with the stronger condition
\smallskip
\begin{itemize}
\it
\item[$(a_2)$]
$a\in L^\infty(0,1)$, $\displaystyle \int_0^1a(x)  \, dx<0$, and there is $z\in (0,1)$ such that $a(x)>0$ for a.e. $x\in (0,z)$ and $a(x)<0$ for a.e.  $x\in (z,1)$.
\end{itemize}
\smallskip
\noindent
When
$(a_2)$ holds,  by \cite[Cor. 3.7]{LGO-19},  any positive singular solution $(\l,u)$ of \eqref{1.1} satisfies
\begin{equation*}
\begin{split}
  u|_{[0,z)} & \in W^{2,\infty}_\mathrm{loc}[0,z) \cap W^{1,1}(0,z) \;\; \hbox{and is concave}, \\
   u|_{(z,1]} & \in W^{2,\infty}_\mathrm{loc}(z,1] \cap W^{1,1}(z,1) \;\; \hbox{and is convex}.
\end{split}
\end{equation*}
Moreover, $u'(x)<0$ for every $x \in (0,z)$, $u'(x)\le0$ for every $x \in (z,1)$, $u'(0)=u'(1)=0$ and
$u'(z^-)=u'(z^+)=-\infty$. Therefore, in this case, singular solutions can only develop jumps at $z$, the node of the function $a(x)$.
\par
  Throughout this paper, for any given $r<s$ and $V\in L^\infty(r,s)$,  we  denote by $ \s[-D^2+V(x);\mc{B},(r,s)]$, with $D^2= \tfrac{d^2}{dx^2}$, the lowest eigenvalue of the boundary value problem
\begin{equation*}
   \left\{ \begin{array}{ll} -w''+V(x)w = \tau w, & \quad r<x<s, \\[1ex]
  \mc{B}w(r)=\mc{B}w(s)=0, & \end{array}\right.
\end{equation*}
where $\mc{B}$ stands either for the Neumann boundary operator, $\mc{N}$, or the Dirichlet
boundary operator, $\mc{D}$.  If $(f_1)$ holds with $p\ge1$, and $u$ is a strictly positive regular solution of  \eqref{1.1}, i.e.,  \eqref{1.3} holds, then $u$ must be a principal eigenfunction associated with  the eigenvalue
\begin{equation*}
   \s[-D^2-\l a(x)\tfrac{f(u)}{u}g(u');\mc{N},(0,1)] =0
\end{equation*}
and hence, e.g., by \cite[Thm. 7.10]{LG13}, $\min u >0$, i.e., $u$ is strictly positive.
Moreover, if $f(u)$ satisfies $f\in \mc{C}^1(0,\infty)$ and $(f_1)$ with $p\ge1$, and $(a_1)$ holds, then,  from \cite[Prop. 1.1]{LOR1} (see also \cite[Lem. 2.1]{LOR2}), we find that  $\l\geq 0$  if \eqref{1.1} possesses a strictly positive solution. Actually, the solution is constant in $[0,1]$ if $\l=0$. Thus, non-constant positive solutions of \eqref{1.1} can only arise for $\l>0$. However,
the situation is quite different if $(f_1)$ holds with $0< p <1$. Indeed, as   pointed out in \cite[Rem. 1.8]{LOR1}, in this case {\it dead core} solutions may occur, thus provoking the possible existence of positive regular solutions even for $\lambda <0$. This  can be shown by slightly modifying \cite[Ex. 2]{LOR1} as follows.
In this work we anyhow restrict our analysis   to   the case $\lambda\ge0$.
\par
\smallskip
\noindent {\bf Example.} Let  $f\in \mc{C}(\R) $  be such that $f(u) =  \sqrt{u}$ if $0 \le u \le 1$.
Then, the function   defined  by
\begin{equation*}
u(x) =
\begin{cases}
\displaystyle
\tfrac{1}{12^2}{( \tfrac{2}{3^4} -x^4)}  &\quad  \text{if } 0\le x \le \tfrac{1}{3},
\\[2mm]
\displaystyle
\tfrac{1}{12^2}{(x-\tfrac{2}{3})^4}  &\quad  \text{if } \tfrac{1}{3} < x \le  \tfrac{2}{3},
 \\[2mm]
\displaystyle
0 &\quad  \text{if } \tfrac{2}{3} < x \le 1,
\end{cases}
\end{equation*}
satisfies $u\in W^{2,\infty}(0,1)$ and it is a positive regular solution of \eqref{1.1}
with   $\lambda =-1$ and   $a\in L^\infty(0,1) $ defined  by
\begin{equation*}
 a(x)=
\begin{cases}
\displaystyle
\left( \frac{u'}{\sqrt{1+(u')^2}}\right)' \frac{1}{\sqrt{u(x)}}  & \text{if }0\le x < \tfrac{2}{3}, \; x\neq \tfrac{1}{3},
 \\[1ex]
\displaystyle
A
 & \text{if } \tfrac{2}{3} \le x \le 1,
\end{cases}
\end{equation*}
for every constant $A\in\R$. In particular,   so that  $(a_1)$ holds,  the constant $A$ can be chosen  so that $\int_0^1 a(x) \,dx <0$. It is worthy observing  that, for this choice of the functions $f(u)$ and $a(x)$, by \cite[Thm. 9.1]{LOR2}, the problem \eqref{1.1} also admits positive regular solutions for sufficiently small $\lambda >0$.
\par
\smallskip
We describe below the main findings of this paper which concern, in the following order,  with  non-existence, existence, multiplicity and regularity properties of the positive solutions  of \eqref{1.1}.
  \par
In Section \ref{s2} we establish that \eqref{1.1} cannot admit a singular solution for sufficiently small $\l>0$ under conditions $(f_1)$ and $(a_1)$, regardless the size of $p>0$. Actually, \eqref{1.1} cannot admit any solution, neither  singular nor regular,  for sufficiently small $\l>0$ if $p\geq 1$. These conclusions are optimal, because, due to
 \cite[Thm. 9.1]{LOR2},
 the problem \eqref{1.1} has at least one    positive regular solution  $(\l,u_\l)$,   for sufficiently small $\l > 0$, if $p\in (0,1)$.
\par
In Section \ref{s3} we show, for quadratic potentials at the origin ($p=1$),  the existence of two components of regular  solutions of \eqref{1.1} bifurcating from the trivial line
at $\l=0$ and at $\l =\l_0$. Throughout this paper, $\l_0>0$ stands for the  positive principal eigenvalue of the linear weighted eigenvalue problem
\begin{equation}
\label{i.10}
\left\{ \begin{array}{l} -\v'' = \l a(x)\v,  \quad 0<x<1,
\\[1ex] \v'(0)=\v'(1)=0. \end{array}\right.
\end{equation}
According to a classical result of Brown and Lin \cite{BL} (see
\cite[Ch. 9]{LG13} for the general theory), besides $\l=0$, the  problem \eqref{i.10} admits under $(a_1)$ a unique positive eigenvalue $\l_0>0$, with  a strictly positive eigenfunction $\v$,  unique up to a positive multiplicative constant.
\par
By the bifurcation theorems in \cite{LGO-19}, these components are subcomponents of some components of the set $\mc{S}_{bv}^+$ of the positive bounded variation solutions of \eqref{1.1}. For superlinear and linear potentials at infinity it is
already known from  \cite{LGO-JDE, LGO-20} that the regular solutions can develop singularities along these components. The existence of positive bounded variation solutions of \eqref{1.1}  for all $\l >\l_0$ is guaranteed by Theorem  \ref{thiii.1}.
\par
Section \ref{s4}  deals with  subquadratic potentials at the origin ($0<p<1$).
In this case the existence of positive bounded variation solutions of \eqref{1.1} for all $\l >0$ follows from \cite[Thm. 1.2]{LOR1}.
The main goal of  Section \ref{s4}  is   establishing the existence of a component of   the set  $\mc{S}_r^+$ of  positive regular solutions bifurcating from $(0,0)$,  which is done  in Theorem \ref{thiv.2}. This result   complements \cite[Thm. 9.1]{LOR2}.   As the proof of  \cite[Thm. 9.1]{LOR2} is based on the direct method  of   calculus of variations it does not guarantee such   structure information,  provided  instead by Theorem \ref{thiv.2} which relies  on the construction of sub- and supersolutions and the use of topological degree methods.
\par
Section \ref{s5} focuses on superquadratic potentials at the origin ($p>1$).
From \cite[Thm. 1.5]{LOR1} and \cite[Thm. 10.1]{LOR2}  the existence of two positive solutions, one of them regular and small, can be inferred for  sufficiently large $\l >0$.
Theorem \ref{thv.2} establishes    the existence of a component of   the set  $\mc{S}_r^+$ of  positive  regular  solutions of \eqref{1.1} bifurcating from $0$ as $\l \to \infty$.
 The proof of Theorem \ref{thv.2} relies on some elementary topological techniques based on the theory of superlinear indefinite problems developed in \cite{AL98}.
\par
Section  \ref{s6} ascertains, for every $p>0$, the limiting profile of the regular solutions
of \eqref{1.1} that are separated away from zero as $\l\to  \infty$, should they exist,
when $f(u)$ and $a(x)$ satisfy  $(f_2)$ and $(a_2)$. The assumption $(a_2)$ entails  that any regular solution  $u$ of \eqref{1.1}  is decreasing in $[0,1]$, and thus
$ u(0)=\|u\|_{L^\infty(0,1)}. $
These solutions satisfy $u(0)>M$ (see $(f_2)$) for sufficiently large $\l>0$ and
grow up to infinity on $[0,z)$ at least at the rate $C_1 \l^{1/q}$, while they decay to zero on $(z,1]$ at least as $C_2 \l^{-1/p}$, for some constants $C_1>0$, $C_2>0$, as sketched in
Figure \ref{Fig01}.
\par
\begin{figure}[!ht]
\centering{
  \includegraphics[scale=0.6]{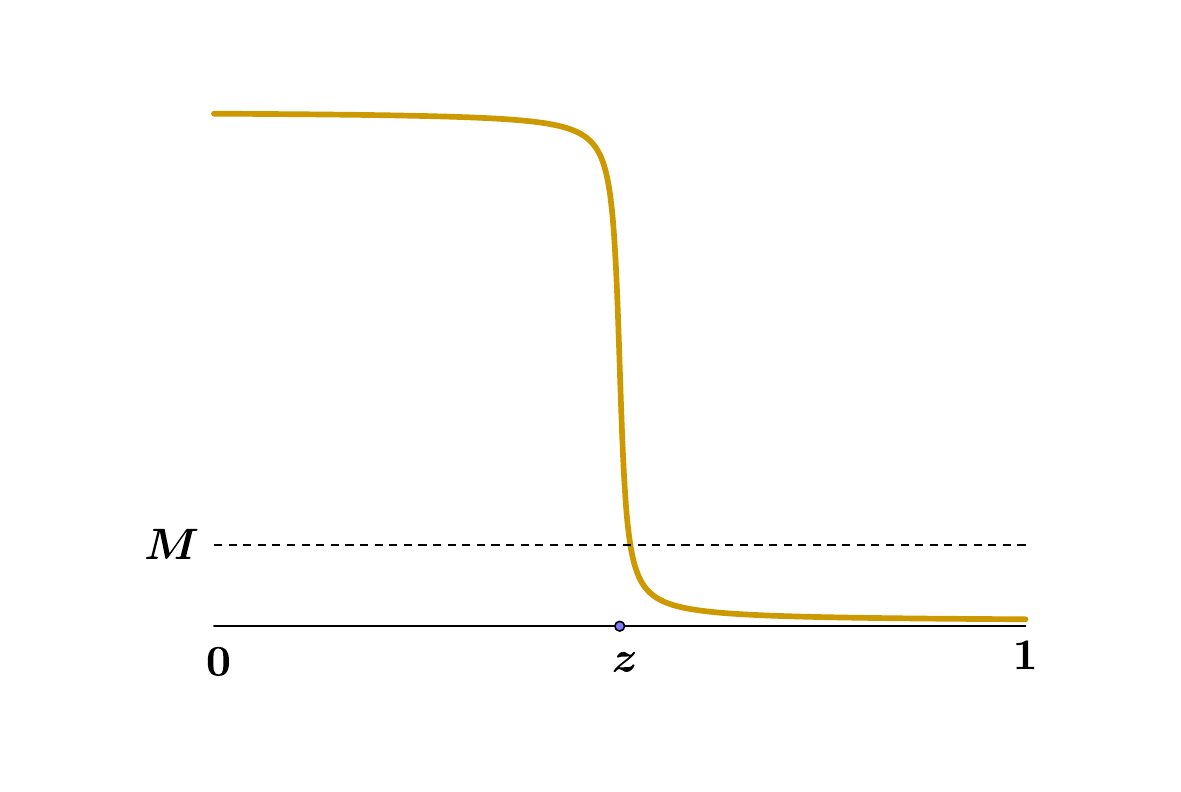}  }
{\caption{The profile of the solutions for large $\l>0$.}
\label{Fig01}}
\end{figure}
 As, according to Theorem \ref{thv.2}, \eqref{1.1} possesses a subcontinuum of the set of regular solutions of \eqref{1.1} consisting of small solutions,  these solutions  must be left outside the mathematical analysis of Section \ref{s6} for the validity of all our results therein.
\par
Section \ref{s7}    carries out a detailed discussion of the existence, and non-existence,  of singular  solutions.   Precisely, Theorem~\ref{th7.1} establishes  a general    criterion that allows to  ascertain the local regularity of the bounded variation solutions of the equation
\begin{equation}
   -\left( \frac{u'}{\sqrt{1+(u')^2}}\right)' =h(x),
\end{equation}
where $h\in  L^1(0,1)$ satisfies   $h(x)\ge 0$ a.e. in $(z-\d_1,z)$ and $h(x) \le 0$ a.e. in $(z,z+\d_2)$ for some  $z \in (0,1)$ and $\d_1, \d_2>0$, according to the behavior of $h(x)$ near its nodal point $z$. Based on this result,   Theorem \ref{th7.2} shows  that, under condition $(a_2)$, the problem \eqref{1.1} cannot admit   singular solutions   if
\begin{equation}
\label{i.12}
\hbox{either}\;\; \int_{0}^z \left( \int_x^za(t)\,dt\right)^{-\frac{1}{2}}dx =\infty,\;\;
\hbox{or}\;\; \int_z^{1} \left( \int_x^za(t)\,dt\right)^{-\frac{1}{2}}dx =\infty.
\end{equation}
This condition measures how smooth is  the function $a(x)$ on the left, or on the right,
of $z$; it is easily seen that   it holds  whenever $a(x) $ is differentiable at $z$.
  Surprisingly, Theorem \ref{th7.2}  holds true regardless the particular behavior
of $f(u)$ at zero and at infinity, just requiring   $f(u)$ to be continuous and positive.  Thus, Theorem \ref{th7.1} is a quite general and versatile result, applying   to a large variety of situations.  In particular, it completes and sharpens, very substantially,  some of our previous findings in  \cite{LOR1,  LOR2,  LGO-19, LGO-JDE, LGO-20}.   As a direct consequence of this regularity result, the global bifurcation diagrams of the set of positive solutions   of \eqref{1.1}  look like  those superimposed in Figure \ref{Fig02} according to the decay rate of the potential at the origin, measured by $p>0$,   and at infinity. In the global bifurcation diagrams  plotted   in Figure \ref{Fig02}, as well as in the remaining figures,  we are plotting the value of
parameter $\l$ versus the $L^\infty$-norm of $u$. Thus, each point on the plotted curves stands for a particular solution  $(\l,u)$ of \eqref{1.1}. In these bifurcation diagrams, continuous lines are filled in by regular solutions, while dashed lines consist of singular solutions.
Thanks to Theorem \ref{th7.2}, the problem \eqref{1.1}
cannot admit singular solutions not only for sublinear potentials at infinity but also for superlinear, or asymptotically linear,  potentials at infinity.
Thus, these findings complete, when $0<p\neq1$, the  regularity result of \cite{LGO-20} as well as the main theorem  of \cite{LGO-JDE}, where the non-existence of singular solutions was only established for sufficiently small $\l>0$.

\begin{figure}[!ht]
\centering{
	\includegraphics[scale=0.475]{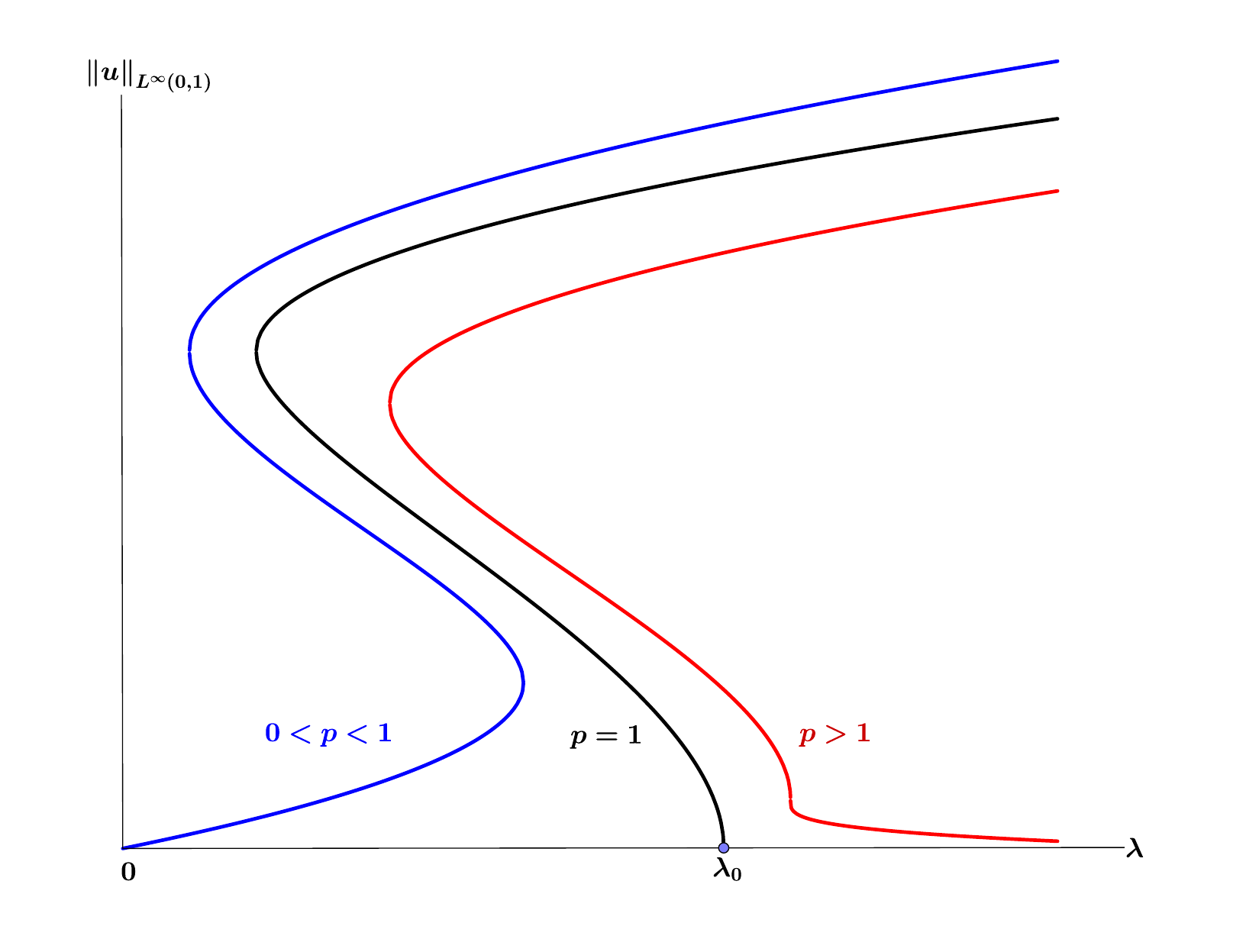}
\includegraphics[scale=0.475]{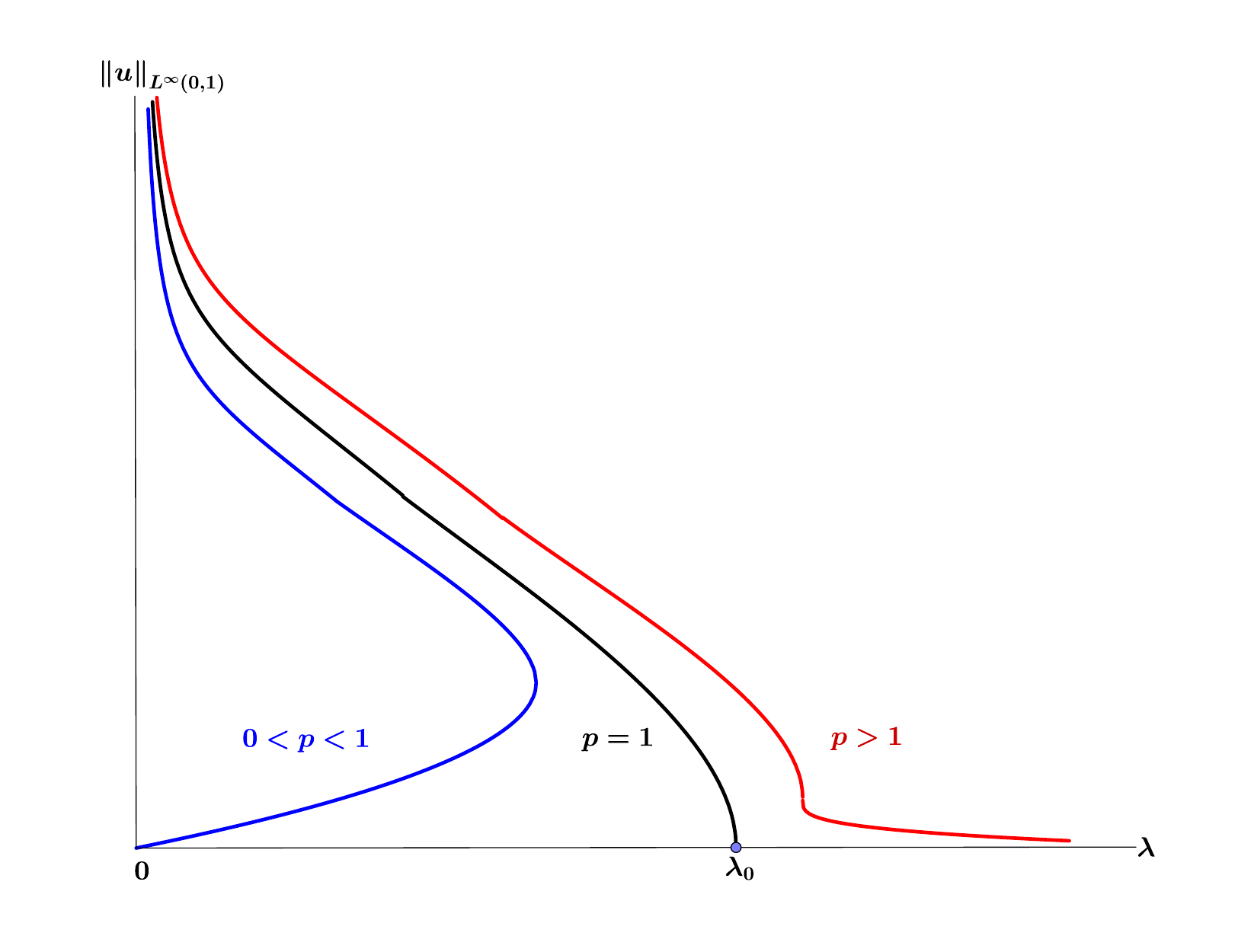}}
{\caption{Global bifurcation diagrams when $F(u)$ is sublinear at infinity   (left), or $F(u)$ is superlinear at infinity (right), and $a(x)$ satisfies $(a_2)$ and \eqref{i.12}, according to the nature of $F(u)$ at the origin: subquadratic (blue, $0<p<1$), quadratic (black, $p=1$),   or superquadratic (red, $p>1$).}
\label{Fig02}}
\end{figure}
\par
The global bifurcation diagrams   of Figure \ref{Fig02} are in  full agreement with the existence and nonexistence results of \cite{LOR1} and \cite{LOR2}, as well as with   the   new findings of this paper.
\par
According to Theorem \ref{th7.2}, the  condition \begin{equation}
\label{i.13}
\int_0^z \left( \int_x^za(t)\,dt\right)^{-\frac{1}{2}}dx <\infty\quad \hbox{and}\quad
\int_z^{1} \left( \int_x^za(t)\,dt\right)^{-\frac{1}{2}}dx<\infty
\end{equation}
is necessary for the existence of a singular solution.
This condition holds, for example, if
\begin{equation}
\label{i.14}
  \limess{x\to  z^-} a(x)
  >0>
  \limess{x\to z^+}a(x).
\end{equation}
According to the results of \cite{LOR2},      the small solutions of \eqref{1.1}
must be regular. So, a further goal of Section \ref{s7} is analyzing the formation of singularities from these
small regular solutions as  $\l$ varies.   By  Theorem~\ref{th7.3}, under conditions $(a_2)$ and \eqref{i.13}, there are examples of functions $f(u)$ satisfying $(f_1)$ for which \eqref{1.1} possesses singular solutions. Moreover, regardless $f(u)$, when $a(x)$ satisfies $(a_2)$ and \eqref{i.14}, then, any sufficiently large solution of \eqref{1.1} for sufficiently large $\l$
must be singular. Therefore, the solutions of \eqref{1.1} whose existence is guaranteed by \cite[Thm. 1.4, Rem. 1.9]{LOR1} for  sufficiently large $\lambda >0$ must be singular. Figure \ref{Fig03} provides us with   six  admissible bifurcation diagrams when the function $a(x)$ satisfies $(a_2)$ and \eqref{i.14},   according to the nature of $F(u)$ at infinity.
\par
\begin{figure}[!ht]
\centering{
\includegraphics[scale=0.475]{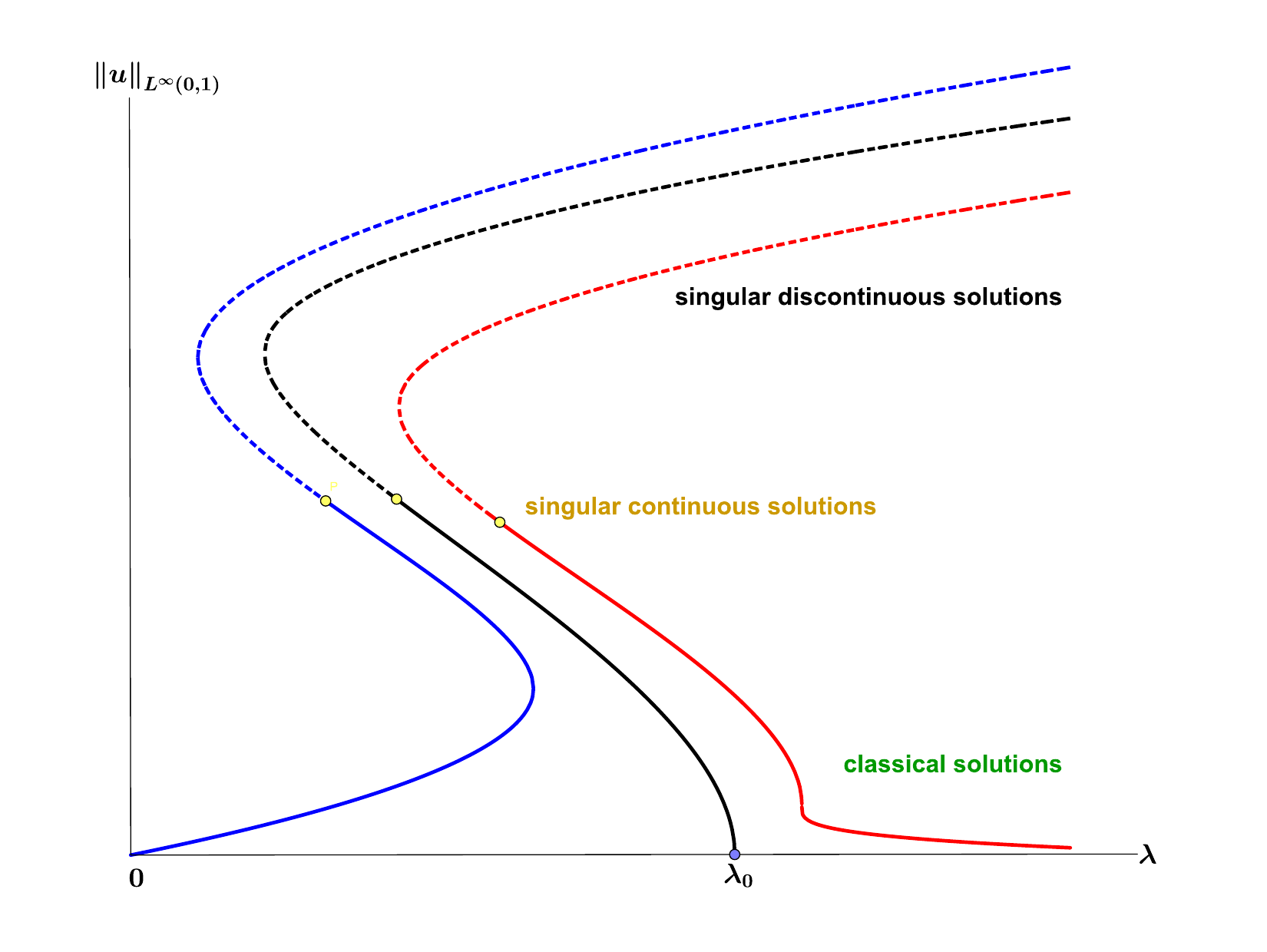}
\includegraphics[scale=0.475]{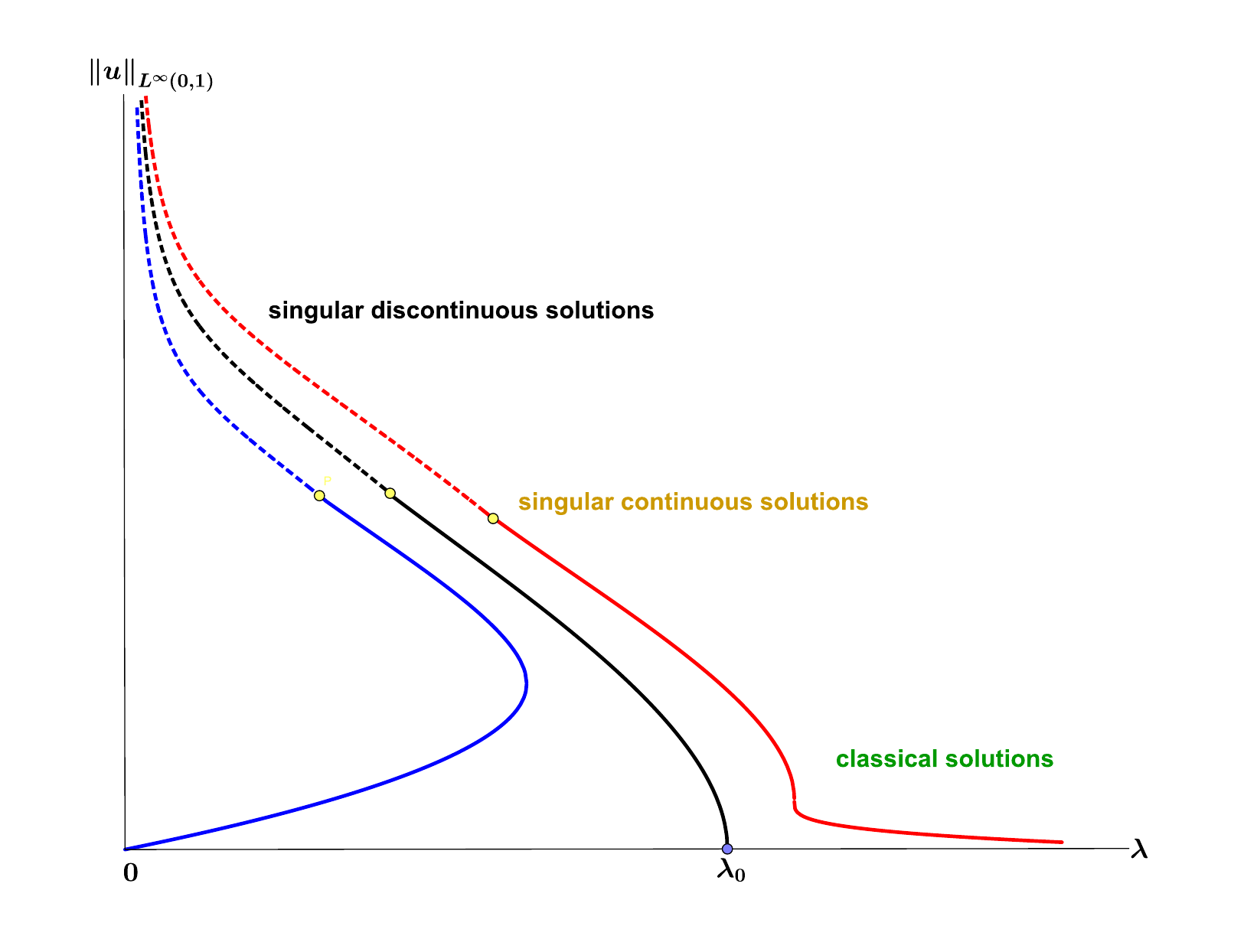}}
{\caption{Global bifurcation diagrams when $F(u)$ is sublinear   at infinity (left), or superlinear at infinity (right), and  $a(x)$ satisfies $(a_2)$ and \eqref{i.13}, according to the nature of $F(u)$ at the origin: subquadratic (blue, $0<p<1$), quadratic (black, $p=1$),  or superquadratic  (red, $p>1$).}
\label{Fig03}}
\end{figure}

In strong contrast with the   situations described in Figure \ref{Fig02}, under condition \eqref{i.14}, the small solutions of \eqref{1.1} are regular, whereas the solutions far away from zero, for sufficiently large $\l>0$, may become singular. Therefore, the small regular solutions on each of the
components plotted in Figure \ref{Fig03}  develop singularities as they become sufficiently large at the points of the bifurcation diagrams separating continuous and  dashed lines. Although the results of Section \ref{s7} guarantee the existence of singular solutions for sufficiently large $\l>0$ for sublinear potential at infinity, we were unable to guarantee the formation of singularities from the small regular solutions along the solution components of \eqref{1.1} just assuming the weaker condition \eqref{i.13}.
 So, this problem remains open here.

\section{Non-existence of   solutions  for small $\lambda >0$  when $p\ge 1$}
\label{s2}

\noindent
This section establishes the non-existence of positive solutions (regular or singular) for sufficiently small $\l > 0$ when $p\ge 1$. Recall that in this context the  positive solutions of \eqref{1.1}  are actually strictly positive. Our first result establishes  the non-existence of   singular solutions of \eqref{1.1} when $f(u)$ is globally bounded in $[0,\infty)$ and $\l\ge0$ is sufficiently small.

\begin{lemma}
\label{leii.1}
Assume $(f_1)$ and $(a_1)$.
Then, the problem \eqref{1.1} has no  positive singular solution for sufficiently small $\l\ge 0$.
\end{lemma}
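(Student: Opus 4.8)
The key object is the flux function associated with a bounded variation solution. Assume $(\lambda,u)$ is a positive bounded variation solution of \eqref{1.1} and set
\[
  w:=\frac{Du^a}{\sqrt{1+(Du^a)^2}},
\]
so that $w\in L^\infty(0,1)$ with $\|w\|_{L^\infty(0,1)}\le1$. First I would recover from the weak formulation \eqref{1.5}, tested against $\phi\in\mc{C}^\infty_c(0,1)$ (for which $D\phi^s=0$), that the distributional derivative of $w$ equals $-\lambda a f(u)$; since $a\in L^1(0,1)$ by $(a_1)$ and $\|f\|_\infty<\infty$ by \eqref{1.7}, this right-hand side lies in $L^1(0,1)$, whence $w\in W^{1,1}(0,1)\hookrightarrow\mc{C}[0,1]$ and $-w'=\lambda a f(u)$ a.e.\ in $(0,1)$. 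Testing \eqref{1.5} against $\phi\in\mc{C}^1[0,1]$ and integrating by parts then forces the Neumann conditions $w(0)=w(1)=0$. All of this is the by now standard reformulation of the one-dimensional curvature problem already exploited in \cite{LOR1,LOR2,LGO-19}.

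Next I would use the structural fact that $Du^s$ is concentrated on the contact set $\{\,|w|=1\,\}$. Plugging $-w'=\lambda a f(u)$ back into \eqref{1.5} for a general admissible $\phi$ and integrating by parts the term $\int_0^1 w'\phi\,dx$ — legitimate since $w\in\mc{C}[0,1]$ vanishes at the endpoints — all absolutely continuous contributions cancel and one is left with $\int_0^1\big(\tfrac{Du^s}{|Du^s|}-w\big)\,dD\phi^s=0$ for every $\phi\in BV(0,1)$ with $|D\phi^s|$ absolutely continuous with respect to $|Du^s|$; since $\big|\tfrac{Du^s}{|Du^s|}\big|=1$ $\,|Du^s|$-a.e., this yields $|w|=1$ $\,|Du^s|$-a.e. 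Consequently, if $\|w\|_{L^\infty(0,1)}<1$ then $Du^s=0$, while $Du^a=w/\sqrt{1-w^2}$ is a $\mc{C}^1$ function of $w$ (which stays away from $\pm1$) and so belongs to $W^{1,1}(0,1)$; hence $u\in W^{2,1}(0,1)$, i.e.\ $u$ is regular. Contrapositively, every singular solution has $\|w\|_{L^\infty(0,1)}=1$, so there is an interior point $x_0\in(0,1)$ (interior because $w(0)=w(1)=0$) with $|w(x_0)|=1$.

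The conclusion is then immediate. Using $w\in W^{1,1}(0,1)$ and $w(0)=0$,
\[
  1=|w(x_0)|=\Bigl|\int_0^{x_0}w'(t)\,dt\Bigr|
  =\lambda\,\Bigl|\int_0^{x_0}a(t)\,f(u(t))\,dt\Bigr|
  \le\lambda\,\|f\|_\infty\int_0^1|a(t)|\,dt,
\]
whence $\lambda\ge\big(\|f\|_\infty\int_0^1|a|\,dx\big)^{-1}=:\lambda_\ast>0$ for any $\lambda$ carrying a positive singular solution; here $\|f\|_\infty>0$ because $f>0$ on $(0,\infty)$, and $\int_0^1|a|\,dx>0$ by $(a_1)$. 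Thus \eqref{1.1} admits no positive singular solution whenever $0\le\lambda<\lambda_\ast$, which is the assertion (the value $\lambda=0$ being subsumed). Observe that the argument is indifferent to the size of $p>0$ and to the precise behaviour of $f$, using only that $f$ is continuous, positive and bounded.

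I expect the only genuinely delicate point to be the justification of the product-rule and integration-by-parts identities involving $w\in W^{1,1}(0,1)$ and $\phi\in BV(0,1)$ that lead to the identification $|w|=1$ on the support of $Du^s$, together with the equivalence ``$u$ regular $\iff\|w\|_{L^\infty(0,1)}<1$''; everything else is elementary once $w$ has been introduced. Fortunately, this structure theory for bounded variation solutions of the one-dimensional curvature equation is available in \cite{Anz83,AmFuPa,LOR1}, so it can be quoted rather than reproved.
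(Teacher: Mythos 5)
Your proof is correct and follows essentially the same route as the paper: both arguments come down to the estimate $\l\,\|a f(u)\|_{L^1(0,1)}\le \l\,\|f\|_\infty\|a\|_{L^1(0,1)}<1$ for small $\l$, which keeps the flux away from $\pm1$ and hence excludes a singular part. The only difference is that the paper invokes this regularity criterion as a black box (\cite[Cor.~3.5]{LGO-19} applied to $h=af(u)$), whereas you reprove it inline via the flux function $w$; the resulting threshold $\l_*=\big(\|f\|_\infty\|a\|_{L^1(0,1)}\big)^{-1}$ is the same.
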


\begin{proof}
Let $u$ be a positive bounded variation solution of \eqref{1.1} for some $\l\ge0$.
Set $h(x) = a(x)  f(u(x)), $ for a.e. $x \in [0,1]$.
Hence, $u$ is  a solution of the problem
\begin{equation}
   \left\{ \begin{array}{ll}
   \displaystyle
   -\left( \frac{u'}{\sqrt{1+(u')^2}}\right)' =\l h(x), & \quad 0<x<1, \\[1ex]
   u'(0)=u'(1)=0. & \end{array}\right.
\end{equation}
Since $\|h\|_{L^1(0,1)} \le\| f\|_\infty   \| a \|_{L^1(0,1)}$, there exists  $\overline \l>0$ such that
$\l \|h\|_{L^1(0,1)} <1$, for all $\l\in [0,\overline \l)$.   Thus, by the regularity result \cite[Cor. 3.5]{LGO-19}, $u \in W^{2,1}(0,1)$ and  therefore $u$ is a regular  solution of \eqref{1.1}.
\end{proof}

The next result provides information on the asymptotic behavior of the positive, necessarily regular, solutions as $\l \to 0$.

\begin{lemma}
\label{leii.2}
Assume $(f_1)$ and $(a_1)$.
Let  $\{(\l_n,u_n)\}_{n\geq 1}$ be a sequence of positive  regular solutions of   \eqref{1.1}  such that $\l_n>0$  for all   $n\geq 1$ and
\begin{equation}
\label{ii.1}
  \lim_{n\to \infty}\l_n=0.
\end{equation}
 Then,  one has that
 \begin{equation}
\label{ii.2}
  \lim_{n\to \infty}u_n=0 \quad \hbox{in } W^{2,1}(0,1).
\end{equation}
\end{lemma}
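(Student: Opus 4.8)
The plan is to establish the convergence in two stages: first obtaining a uniform bound on $\|u_n\|_{W^{2,1}(0,1)}$, and then showing that any convergent subsequence must tend to zero, so that the whole sequence converges.

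\emph{Step 1: uniform bounds and compactness.} Since each $u_n$ is a regular solution, it satisfies
\begin{equation*}
   -\left( \frac{u_n'}{\sqrt{1+(u_n')^2}}\right)' =\l_n a(x) f(u_n), \qquad u_n'(0)=u_n'(1)=0.
\end{equation*}
Integrating the equation over $[0,1]$ and using the boundary conditions gives $\int_0^1 a(x)f(u_n(x))\,dx=0$, which (via $\int_0^1 a<0$ and $f\ge0$) forces $u_n$ to take values where $f$ is small and where $a>0$; more importantly, integrating from $0$ to $x$ and bounding $f$ by $\|f\|_\infty$ (finite by \eqref{1.7}) yields
\begin{equation*}
  \left| \frac{u_n'(x)}{\sqrt{1+(u_n'(x))^2}} \right| \le \l_n \|f\|_\infty \|a\|_{L^1(0,1)} =: \varepsilon_n,
\end{equation*}
with $\varepsilon_n\to0$ by \eqref{ii.1}. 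As long as $\varepsilon_n<1$ (true for large $n$), this inverts to $|u_n'(x)|\le \varepsilon_n/\sqrt{1-\varepsilon_n^2}\to0$, so $\|u_n'\|_\infty\to0$. Hence the first term in the equation, $-\big(u_n'/\sqrt{1+(u_n')^2}\big)'=-u_n''\,g(u_n')^{-1}$ with $g$ as in \eqref{1.4}, shows $\|u_n''\|_{L^1}\le \|g(u_n')\|_\infty\,\l_n\|f\|_\infty\|a\|_{L^1}\to0$, since $g(u_n')$ stays bounded (indeed $\to1$). This already gives $\|u_n''\|_{L^1(0,1)}\to0$ and $\|u_n'\|_\infty\to0$.

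\emph{Step 2: controlling $\|u_n\|_\infty$.} It remains to show $\|u_n\|_\infty\to0$, i.e.\ that the solutions do not drift off to a large constant. Here I would exploit the structure of $f$ near $0$ and near $\infty$ encoded in \eqref{1.2}, together with the identity $\int_0^1 a(x)f(u_n(x))\,dx=0$. Writing $m_n=\min u_n$ and $m_n^+=\max u_n$, the bound $\|u_n'\|_\infty\to0$ gives $m_n^+-m_n\to0$, so it suffices to bound, say, $m_n$. Suppose along a subsequence $m_n\to c\in[0,\infty]$. If $c\in(0,\infty)$, then $f(u_n)\to f(c)>0$ uniformly and $\int_0^1 a(x)f(u_n)\,dx\to f(c)\int_0^1 a<0$, contradicting the integral identity; if $c=\infty$ then $f(u_n)\to0$ uniformly by \eqref{1.7}, which is consistent with the integral identity but then one revisits the equation more carefully: dividing the equation by $f(u_n)$ (legitimate since $u_n$ is strictly positive and bounded away from $0$ for large $n$), $u_n$ is a principal eigenfunction for $\s[-D^2-\l_n a(x)f(u_n)g(u_n')/u_n;\mc{N},(0,1)]=0$, but as $u_n\to\infty$ and $\l_n\to0$ the weight $\l_n a f(u_n)g(u_n')/u_n\to0$ in $L^1$, forcing the principal eigenvalue to be positive, a contradiction. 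Hence $c=0$, and since every subsequential limit of $m_n$ is $0$, $\|u_n\|_\infty\to0$.

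\emph{Step 3: conclusion.} Combining $\|u_n\|_\infty\to0$, $\|u_n'\|_\infty\to0$ and $\|u_n''\|_{L^1}\to0$ gives precisely \eqref{ii.2}. \textbf{The main obstacle} is Step 2, ruling out the escape $\|u_n\|_\infty\to\infty$: the crude bound $\|f\|_\infty<\infty$ is too weak by itself, and one genuinely needs either the sign condition $\int_0^1 a<0$ through the integral identity, or a principal-eigenvalue argument as in the paragraph following \eqref{1.3}, to close this case. One should be careful that the positivity of solutions is a priori only in the sense $\essinf u_n\ge0$, so the strict positivity (needed to divide by $f(u_n)$ or $u_n$) must first be recovered from the fact, recalled before Lemma~\ref{leii.1}, that under $p\ge1$ positive solutions are strictly positive — but if $0<p<1$ one instead argues directly with the integral identity and the uniform smallness of $u_n'$, which does not require strict positivity.
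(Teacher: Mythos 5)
Your Step 1 and Step 3, and the case $c\in(0,\infty)$ of Step 2, are correct and coincide with the paper's argument (the paper writes $u_n(x)=u_n(x_n)+\int_{x_n}^x u_n'$ at a maximum point $x_n$ and splits into the bounded and unbounded cases). The genuine gap is in the case $c=\infty$. Your proposed eigenvalue argument does not close it: it is true that $u_n$ is a positive Neumann eigenfunction, with eigenvalue $0$, of $-D^2-V_n$ where $V_n=\l_n a f(u_n)g(u_n')/u_n$, but the fact that $V_n\to0$ in $L^1(0,1)$ does not force $\s[-D^2-V_n;\mc{N},(0,1)]$ to be positive; it only forces it to converge to $\s[-D^2;\mc{N},(0,1)]=0$, which is perfectly compatible with its being exactly $0$ for every $n$ (for $V$ equal to a small negative constant the Neumann principal eigenvalue is negative, so no sign can be inferred from smallness of the potential alone). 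Hence no contradiction is obtained, and the possibility $\|u_n\|_{L^\infty(0,1)}\to\infty$ remains open in your proof.

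The paper closes this case by using the second limit in \eqref{1.2} quantitatively, rather than the crude consequence $f(u_n)\to0$. Since $\|u_n'\|_{L^\infty(0,1)}\to0$ and $u_n(x_n)\to\infty$, one has $u_n/\|u_n\|_{L^\infty(0,1)}\to1$ uniformly on $[0,1]$, whence
\begin{equation*}
\frac{f(u_n(x))}{\|u_n\|_{L^\infty(0,1)}^{-q}}
=\frac{f(u_n(x))}{u_n^{-q}(x)}\cdot\frac{u_n^{-q}(x)}{\|u_n\|_{L^\infty(0,1)}^{-q}}
\longrightarrow h
\quad\hbox{uniformly in } [0,1].
\end{equation*}
Dividing the identity $\int_0^1 a(x)f(u_n(x))\,dx=0$ by $\|u_n\|_{L^\infty(0,1)}^{-q}$ and letting $n\to\infty$ then gives $h\int_0^1a(x)\,dx=0$, contradicting $h>0$ and $(a_1)$. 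This is exactly the point where the precise decay $f(u)\sim hu^{-q}$ at infinity, and not merely $\|f\|_\infty<\infty$, is needed; you correctly flagged this as the main obstacle, but the device you chose to overcome it does not work, whereas the normalized integral identity does.
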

\begin{proof}
Let  $\{(\l_n,u_n)\}_{n\geq 1}$  be any  sequence of  positive regular solutions of \eqref{1.1} such that  $\l_n>0$, for all $n\ge 1$, and \eqref{ii.1} holds.
Let us set, for every $n$,
$$
\displaystyle  \psi_n  =  \frac{-u_n'}{\sqrt{1+ (u_n')^2} }\in W^{1,\infty}(0,1).
$$
Pick any $x  \in (0,1]$. Integrating the  equation   of  \eqref{1.1} in $[0,x]$ yields
$$
 \psi_n(x)  =\l_n \int_0^{x} a(t)f(u_n(t))\,dt
$$
and hence
\[
  \|\psi_n\|_{L^\infty(0,1)}   \leq \l_n \|f\|_\infty \|a\|_{L^1(0,1)}.
\]
Consequently,   by \eqref{ii.1}, we find  that $\lim_{n\to \infty} \|\psi_n\|_{L^\infty(0,1)}= 0$ and therefore
\begin{equation}
\label{ii.3}
 \lim_{n\to \infty} \|u'_n\|_{L^\infty(0,1)}=0 .
\end{equation}
For each $n$, let $x_n \in [0,1] $ be such that
\begin{equation}
\label{ii.4}
  u_n(x_n)=\|u_n\|_{L^\infty(0,1)}.
\end{equation}
Let us write, for all $n\geq 1$ and $x\in [0,1]$,
\begin{equation}
\label{ii.5}
u_n(x) = u_n(x_n) + \int_{x_n}^x u'_n (t) \, dt .
\end{equation}
For a subsequence, still labeled by $n$,  we have that either
\begin{equation}
\label{ii.6}
\lim_{n\to \infty} u_n(x_n) = \infty,
\end{equation}
or
\begin{equation}
\label{ii.7}
 \lim_{n\to \infty} u_n(x_n) =  u_\omega \in [0,\infty).
\end{equation}
In the former  case,  thanks to \eqref{ii.3}, \eqref{ii.4}  and \eqref{ii.6}, we infer from \eqref{ii.5} that
\begin{equation*}
  \lim_{n\to \infty}\frac{u_n(x)}{\|u_n\|_{L^\infty(0,1)}}   = 1 \quad \hbox{uniformly in}\;\; [0,1].
\end{equation*}
  By $(f_1)$, this implies that
\[
  \lim_{n\to \infty} \frac{f(u_n(x))}{\|u_n\|_{L^\infty(0,1)}^{-q}}=\lim_{n\to \infty}
  \frac{f(u_n(x))}{u_n^{-q}(x)}\lim_{n\to \infty} \frac{u_n^{-q}(x) }{\|u_n\|_{L^\infty(0,1)}^{-q}} = h
  \quad \hbox{uniformly in}\;\; [0,1].
\]
On the other hand, integrating the equation of \eqref{1.1} in $[0,1]$ yields, for all $n\geq 1$,
\begin{equation}
\label{ii.8}
  \int_0^1 a(x)f(u_n(x))\,dx=0
 \end{equation}
and, hence, $\int_0^1a(x)\frac{f(u_n(x))}{\|u_n\|_{L^\infty(0,1)}^{-q}}\,dx=0$. Thus, letting $n\to \infty$ and using $(f_1)$, we obtain that $ h \int_0^1a(x) \, dx=0$. As $h>0$, this contradicts $(a_1)$, which  requires  $\int_0^1a(x)  \, dx <0$.  So, \eqref{ii.6} cannot occur. Consequently, the condition \eqref{ii.7} holds. In this case,  we   infer from \eqref{ii.5} and \eqref{ii.3}  that $\{u_n\}_{n\geq 1}$ converges to $ u_\omega$ in $C^1[0,1]$. Hence, letting $n\to \infty$ in \eqref{ii.8} yields $f(u_\o)\int_0^1a(x)\,dx =0$. Consequently, since  $\int_0^1a(x) \, dx <0$, we get $f(u_\o)=0$. By $(f_1)$, we necessarily have that $u_\o =0$. Therefore,  we can conclude from  \eqref{ii.5} that $\{u_n\}_{n\ge 1}$ converges to $ 0$ in $C^1[0,1]$, and actually, by \eqref{1.3}, in $W^{2,1}(0,1)$. This ends the proof.
\end{proof}

The next result establishes the non-existence of positive solutions of \eqref{1.1} if  $p\ge 1$
when $\l>0$ is small.  Whereas in case $0<p<1$, by   \cite[Thm. 9.1]{LOR2} and Lemma \ref{leii.1},    \eqref{1.1} possesses only regular solutions for sufficiently small $\l>0$.

\begin{theorem}
\label{thii.1}
Assume $(f_1)$, with $p\ge1$, and $(a_1)$.
Then, the problem \eqref{1.1} has no positive solutions for sufficiently small $\l>0$.
\end{theorem}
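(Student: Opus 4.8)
The plan is to argue by contradiction, reducing everything to the two preceding lemmas and then exploiting a rescaling that works precisely because $p\ge1$. Suppose the conclusion fails; then there is a sequence $\{(\l_n,u_n)\}_{n\ge1}$ of positive solutions of \eqref{1.1} with $\l_n>0$ and $\l_n\to0$. By Lemma~\ref{leii.1}, for all sufficiently large $n$ the solution $u_n$ is regular, hence it satisfies \eqref{1.3}; and then Lemma~\ref{leii.2} yields $u_n\to0$ in $W^{2,1}(0,1)$, so, by the embedding $W^{2,1}(0,1)\hookrightarrow C^1[0,1]$, one has $\rho_n:=\|u_n\|_{L^\infty(0,1)}\to0$ and $\|u_n'\|_{L^\infty(0,1)}\to0$.

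Next I would rescale. Since $u_n$ is a positive regular solution, $\rho_n>0$; set $v_n:=u_n/\rho_n$, so that $v_n\ge0$, $\|v_n\|_{L^\infty(0,1)}=1$, $v_n'(0)=v_n'(1)=0$, and, from \eqref{1.3} and \eqref{1.4},
\[
-v_n'' = \l_n\, a(x)\,\frac{f(u_n)}{\rho_n}\,g(u_n')\qquad\text{a.e. in }(0,1).
\]
The crucial estimate is where $p\ge1$ enters: since $f(t)/t^p\to1$ as $t\to0^+$ by $(f_1)$, for $n$ large we have $f(u_n(x))\le 2\,u_n(x)^p\le 2\rho_n^{p}$, whence $f(u_n)/\rho_n\le 2\rho_n^{p-1}$, and this stays bounded exactly because $p\ge1$. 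Together with $\|g(u_n')\|_{L^\infty(0,1)}\to g(0)=1$ this gives $\|v_n''\|_{L^1(0,1)}\le 2\l_n\rho_n^{p-1}\|g(u_n')\|_{L^\infty(0,1)}\|a\|_{L^1(0,1)}\to0$. Since $v_n'(0)=0$, integrating yields $\|v_n'\|_{L^\infty(0,1)}\le\|v_n''\|_{L^1(0,1)}\to0$; and if $x_n$ is a point of maximum of $u_n$, so $v_n(x_n)=1$, then $v_n(x)=1+\int_{x_n}^x v_n'$ shows $v_n\to1$ uniformly. Hence $v_n\to1$ in $C^1[0,1]$; in particular $v_n\ge\tfrac12$ and so $u_n(x)\ge\tfrac12\rho_n>0$ for every $x$ once $n$ is large, i.e.\ $u_n$ is strictly positive.

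Finally I would pass to the limit in an integral identity. Integrating the equation in \eqref{1.1} over $(0,1)$ and using $u_n'(0)=u_n'(1)=0$ gives $\int_0^1 a(x)f(u_n(x))\,dx=0$ for every $n$. Dividing by $\rho_n^{p}$ and writing $\frac{f(u_n(x))}{\rho_n^{p}}=\frac{f(u_n(x))}{u_n(x)^p}\,v_n(x)^p$ (legitimate since $u_n>0$ for $n$ large) we obtain
\[
\int_0^1 a(x)\,\frac{f(u_n(x))}{u_n(x)^p}\,v_n(x)^p\,dx=0 .
\]
By $(f_1)$ and $u_n\to0$ uniformly, $\tfrac{f(u_n)}{u_n^p}\to1$ uniformly, and $v_n^p\to1$ uniformly, so the integrand converges to $a(x)$ while staying dominated by a constant multiple of $|a|\in L^1(0,1)$; dominated convergence then forces $\int_0^1 a(x)\,dx=0$, contradicting $(a_1)$. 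As for the difficulty: there is no serious obstacle, the substantive work being already packaged in Lemmas~\ref{leii.1}--\ref{leii.2}; the one point to handle with care is the rescaling, namely checking that $f(u_n)/\rho_n$ remains bounded — which is exactly where $p\ge1$ is used, and fails when $0<p<1$, consistently with the existence of small solutions as $\l\to0^+$ recorded in \cite[Thm. 9.1]{LOR2} — and that the limit profile of $v_n$ is the constant $1$, so that the sign condition $\int_0^1a<0$ is genuinely violated in the limit.
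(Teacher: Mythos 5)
Your proposal is correct and follows essentially the same route as the paper's own proof: reduce to regular solutions via Lemmas~\ref{leii.1} and~\ref{leii.2}, rescale $v_n=u_n/\|u_n\|_{L^\infty(0,1)}$, use $p\ge1$ to force $v_n''\to0$ and hence $v_n\to1$, and pass to the limit in $\int_0^1 a\,f(u_n)\,dx=0$ to contradict $\int_0^1 a\,dx<0$. The only cosmetic difference is that the paper packages the coefficient as $a_n=a\,[1+(u_n')^2]^{3/2}f(u_n)/u_n^p$ and shows $a_n\to a$ in $L^1(0,1)$, whereas you estimate $f(u_n)/\rho_n\le 2\rho_n^{p-1}$ directly; both hinge on the same use of $p\ge1$.
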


\begin{proof}
Suppose by contradiction that there exists a sequence $\{(\l_n,u_n)\}_{n\geq 1}$ of
(strictly) positive solutions   of \eqref{1.1} with $\l_n >0$ for all $n$ and such that
$\lim_{n\to\infty}\l_n=0$. By Lemmas \ref{leii.1} and \ref{leii.2},  we can suppose that all these solutions are regular and \eqref{ii.2} holds. Let us set, for every  $n\geq 1$  and a.e. $ x\in [0,1]$,
 \begin{equation*}
  a_n(x) =  a(x)\left[1+(u_n'(x))^2\right]^\frac{3}{2}\frac{f(u_n(x))}{u_n^p(x)}.
\end{equation*}
Then, due  to \eqref{1.3},   each $u_n$ satisfies
\begin{equation}
\label{ii.9}
  \left\{ \begin{array}{l} -u_n''=\l_na_n(x) u_n^p,\quad 0<x<1, \\[1ex]
  u_n'(0)=u_n'(1)=0.\end{array}\right.
\end{equation}
By \eqref{ii.2}, we have that $\lim_{n\to\infty} \left[1+(u_n'(x))^2\right]^\frac{3}{2} = 1$ uniformly in $[0,1]$, and, by \eqref{1.2},
\begin{equation}
\label{ii.10}
  \lim_{n\to \infty} \frac{f(u_n(x))}{u_n^p(x)}=1\quad \hbox{uniformly in}\;\; [0,1].
\end{equation}
Thus,  from these facts, we  infer that
\begin{equation}
\label{ii.11}
  \lim_{n\to \infty}a_n = a \quad \text{in } L^1(0,1) .
\end{equation}
Subsequently, we define, for every  $n\geq 1$,  $v_n  =  \frac{u_n}{\|u_n\|_{L^\infty(0,1)}}$. By \eqref{ii.9},   each $v_n$ satisfies
\begin{equation*}
  \left\{ \begin{array}{l} -v_n''= \l_n a_n(x) u_n^{p-1}v_n,\quad 0<x<1, \\
  v_n'(0)=v_n'(1)=0.\end{array}\right.
\end{equation*}
and thus
$$
  \|v''_n\|_{L^1(0,1)} \le \l_n \|a_n\|_{L^1(0,1)} \|u_n\|^{p-1}_{L^\infty(0,1)}.
$$
Hence, from \eqref{ii.1}, \eqref{ii.11}, \eqref{ii.2} and  the assumption  $p\geq 1$, we find that
$\lim_{n\to \infty}v''_n=0$ in $L^1(0,1)$. Writing down, for every    $n\geq 1 $   and $x\in [0,1]$,
$$
v'_n(x) = v'_n(0) + \int_0^x v_n''(t) \, dt
\quad \hbox{and} \quad v_n(x) = v_n(x_n)+ \int_{x_n}^x v_n'(t) \, dt,
$$
where $x_n\in [0,1]$ is taken so that $v_n(x_n) = \|v_n\|_{L^\infty(0,1)} =1$, it is easily seen that
\begin{equation}
\label{ii.12}
  \lim_{n\to \infty}v_n=1 \quad  \hbox{in }\; {W}^{2,1}(0,1).
\end{equation}
As $\int_0^1 a(x) f(u_n(x)) \, dx =0$ holds for every   $n\geq 1$, by  \eqref{ii.10} and  \eqref{ii.12}, we get
\begin{align*}
0 &= \lim_{n\to\infty}\int_0^1 a(x) f(u_n(x)) \, dx
\\&= \lim_{n\to\infty} \frac{1} {\|u_n\|^p_{L^\infty(0,1)} } \int_0^1 a(x) f(u_n(x)) \, dx
\\
&= \lim_{n\to\infty}  \int_0^1 a(x) \frac{f(u_n(x))}{u^p_n(x)} \frac{u^p_n(x)} {\|u_n\|^p_{L^\infty(0,1)}  } \, dx
\\
&=\int_0^1 a(x) \left( \lim_{n\to\infty}   \frac{f(u_n(x))}{u^p_n(x)} \right) \left(  \lim_{n\to\infty}   v_n^p(x)  \right) dx
=\int_0^1 a(x) \, dx,
\end{align*}
which is impossible, because we are assuming, by $(a_1)$, that $\int_0^1a(x)\,dx<0$. This
contradiction ends the proof.
\end{proof}

\section{  Global Bifurcation from $(\l,u)=(0,0)$ and  $(\l,u)=(\l_0,0)$ when $p=1$} \label{s3}

\noindent  Our main goal in this section is to prove, under assumptions  $(a_1)$ and $(f_1)$, with $p=1$,  the existence of connected components of the set of the positive   solutions of \eqref{1.1}, which   are indeed strictly positive if they are regular, or  if condition $(a_2)$ holds. Thus,
we  generally suppose that the functions $a(x)$ and $f(u)$  satisfy $(a_1)$  and $(f_1)$, with $p=1$, except in the last theorem, where $(a_2)$  and $(f_3)$, with $p=1$, are assumed. In the subsequent analysis  the weighted eigenvalue problem \eqref{i.10} plays a pivotal role.
\par
We start recalling that,  thanks to   \cite[Thm. 1.4, Rem. 1.9]{LOR1}, the problem \eqref{1.1} admits positive solutions   for  sufficiently large $\l>0$.
Some changes in the proof yield   the following sharper result, which seems optimal in
the sense that \eqref{1.1} might not admit any positive solution for $\l\leq \l_0$.

\begin{theorem}
\label{thiii.1}
Assume $(f_1)$, with $p=1$, and $(a_1)$. Then, for every $\l > \l_0$,  the problem \eqref{1.1} has at least one   positive  solution.
\end{theorem}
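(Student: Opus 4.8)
The plan is to produce, for each fixed $\l>\l_0$, a positive bounded variation solution of \eqref{1.1} by the method of sub- and supersolutions, exploiting the fact that $f$ is bounded. The crucial structural features are that $\int_0^1 a<0$ (so large constants are supersolutions) and that $\l_0$ is the principal eigenvalue of \eqref{i.10} with a strictly positive eigenfunction $\varphi$ (so small multiples of $\varphi$ can be pushed to be subsolutions once $\l>\l_0$, using $f(u)/u\to 1$ as $u\to0^+$, i.e. $p=1$). Once a sub/supersolution pair $\un u\le \bar u$ is in hand, the existence of a solution in the order interval $[\un u,\bar u]$ for the curvature operator with Neumann conditions follows from the variational/monotone-iteration machinery already developed in the cited works \cite{LOR1,LOR2,LGO-19}; I would invoke that rather than rebuild it.

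First I would construct the supersolution. Since $f(0)=0$, $f>0$ on $(0,\infty)$ and $\|f\|_\infty<\infty$ by \eqref{1.7}, pick a constant $\bar u=C$ large. For a constant, $-\big(\bar u'/\sqrt{1+(\bar u')^2}\big)'=0$, so $\bar u$ is a (weak) supersolution provided $\l a(x) f(C)\le 0$ in the appropriate tested sense; but $a$ changes sign, so a bare constant need not work. The standard fix is to take $\bar u$ to be the (regular) solution of the auxiliary torsion-type problem $-\big(\bar u'/\sqrt{1+(\bar u')^2}\big)'=\l\,\|f\|_\infty\,a^+(x)$ with Neumann conditions plus a normalization — this has small $L^1$ right-hand side only if $\l$ is small, so instead I would use that $\|f\|_\infty<\infty$ together with the comparison argument from the proof of \cite[Thm. 1.4]{LOR1}: there one builds a supersolution from the solution of the non-homogeneous curvature equation with datum $\l a^+\|f\|_\infty$, which exists in $BV$ for every $\l$ precisely because the curvature operator is bounded; adding a large enough constant makes it lie above any prescribed subsolution and keeps it a supersolution since $f$ is increasing near where it matters — here is exactly where I would cite the construction in \cite{LOR1} and merely note the modification. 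The subsolution is the genuinely new point: take $\un u=\e\varphi$ with $\varphi>0$ the eigenfunction of \eqref{i.10}, $\|\varphi\|_\infty$ small. Then $-\un u''=\e\l_0 a\varphi=\l_0 a\,\un u$, and for the curvature operator $-\big(\un u'/\sqrt{1+(\un u')^2}\big)'=g(\un u')^{-1}(-\un u'')$ with $g$ as in \eqref{1.4}; since $g(\e\varphi')=1+O(\e^2)$ uniformly, we need $\l a(x) f(\e\varphi)\ge (1+O(\e^2))^{-1}\l_0 a(x)\,\e\varphi$ in the tested sense, which by $f(\e\varphi)=\e\varphi(1+o(1))$ as $\e\to0$ (using $p=1$) reduces to $\l(1+o(1))\ge \l_0(1+O(\e^2))$ — true for $\e$ small since $\l>\l_0$. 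One then checks $\un u\le\bar u$ by taking $\bar u$'s additive constant large, and applies the existence theorem on $[\un u,\bar u]$; the resulting $u$ satisfies $\essinf u\ge 0$ and $\esssup u\ge \esssup(\e\varphi)>0$, hence is positive in the paper's sense.

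The main obstacle I expect is the supersolution construction handled uniformly in $\l$ up to arbitrarily large values: because the curvature operator saturates, the naive constant supersolution fails where $a>0$, and one must be slightly careful that the $BV$ solution of the auxiliary problem with datum $\l a^+\|f\|_\infty$ genuinely dominates $\e\varphi$ and is compatible with the sub/supersolution scheme for \eqref{1.5} (in particular that it is an ordered pair in the $BV$ sense and that the monotone iteration does not leave the order interval). A secondary subtlety is that for $\l$ only slightly above $\l_0$ the subsolution $\e\varphi$ must be chosen with $\e$ depending on $\l-\l_0$ through the $o(1)$ in $f(\e\varphi)=\e\varphi(1+o(1))$; this is harmless pointwise but one should be sure the estimate is uniform in $x\in[0,1]$, which it is because $\varphi$ is bounded away from $0$ and $\infty$ on $[0,1]$ by the Brown–Lin result. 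I would finally remark, as the statement's "might not admit any positive solution for $\l\le\l_0$" suggests, that the threshold $\l_0$ is the natural bifurcation value from Section \ref{s3}, so no claim is made below it.
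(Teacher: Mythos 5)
Your proposal has a genuine gap at its central new step: $\un u=\e\varphi$ is \emph{not} a subsolution on the set where $a<0$. The subsolution inequality is $(1+(\e\varphi')^2)^{-3/2}\,\l_0\,a(x)\,\e\varphi(x)\le \l\,a(x)\,f(\e\varphi(x))$ for a.e.\ $x$. Where $a(x)>0$ this reduces, as you say, to $\l_0(1+O(\e^2))^{-3/2}\le \l\,f(\e\varphi)/(\e\varphi)=\l(1+o(1))$, which holds for small $\e$ because $\l>\l_0$. But where $a(x)<0$ — a set of positive measure, since $\int_0^1 a<0$ — dividing by $a\,\e\varphi<0$ \emph{reverses} the inequality, and the requirement becomes $\l_0(1+O(\e^2))^{-3/2}\ge \l(1+o(1))$, which fails for small $\e$ precisely because $\l>\l_0$. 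Your reduction ``$\l(1+o(1))\ge\l_0(1+O(\e^2))$'' silently divides by $a(x)$ as if it were positive everywhere. This is not a removable technicality: with $f(u)/u\to1$ at $0$, no small multiple of the eigenfunction at $\l_0$ can satisfy the subsolution inequality on $\{a<0\}$ when $\l>\l_0$. (A fix along these lines would need, e.g., the principal eigenfunction $\psi_\l$ of $-\psi''-\l a\psi=\s\psi$ with $\s=\s[-D^2-\l a;\mc{N},(0,1)]<0$ for $\l>\l_0$, so that the strict negativity of $\s$ absorbs the $o(1)$ error uniformly; but that requires $a\in L^\infty$, whereas $(a_1)$ only gives $a\in L^1$.)

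The supersolution is also not secured: the auxiliary Neumann problem with right-hand side $\l\|f\|_\infty a^+(x)$ has no solution at all, since solvability of a Neumann problem for the curvature operator forces the datum to have zero mean, while $\int_0^1 a^+>0$; and a bare constant fails where $a>0$, as you note. The paper avoids all of this by taking a completely different, variational route: it minimizes the functional $\JJ(u)=\int_0^1(\sqrt{1+(Du^a)^2}-1)\,dx+\int_0^1|Du^s|-\l\int_0^1 aF(u)\,dx$ over $BV(0,1)$, obtaining coercivity and boundedness from below from the sublinearity of $F$ at infinity (exponent $1-q<1$) together with $\int_0^1a<0$, nontriviality of the minimizer by showing $\JJ(s\varphi)<0$ for small $s>0$ using exactly the hypothesis $\l>\l_0$, and positivity from $\JJ(|u|)=\JJ(u)$. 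If you want to keep a sub/supersolution scheme you must replace both barriers; as written, neither one exists.
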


\begin{proof}
Fix any $\l  >\l_0$. We will find a positive bounded variation solution $u$ of \eqref{1.1}  as a global minimizer of
 the functional $\JJ : BV(0,1) \to \RR$  defined by
$$
\JJ(u)  = \int_0^1 (\sqrt{1+ (Du^a(x))^2}- 1) \,dx
+\int_0^1   |Du^s |   - \l \int_0^1 a(x)\, F(u(x)) \, dx.
$$
It is plain that, without  loss of generality,  we can suppose that $F(u)$ is an even function.

We first prove that, under   $(f_1)$ and $(a_1)$, $\JJ$ is coercive and bounded from below in $BV(0,1)$. Indeed, setting   $\kappa = \frac{h}{1-q} $, the condition
\eqref{Fq} entails that
for every $\varepsilon >0$ there exists $c_\varepsilon >0$ such that
\begin{equation}
\label{iii.1}
    \left|F(u) - \kappa |u|^q \right| \le \varepsilon  \, |u|^q + c_\varepsilon \quad \hbox{for all}\;\; u\in\RR.
\end{equation}
Hence, setting
\[
  r = \int_0^1 u(x) \, dx\quad \hbox{and}\quad w=u-r \quad \hbox{for every}\;\; u\in BV(0,1),
\]
it follows from the Jensen inequality that
\begin{equation}
\label{iii.2}
\begin{split}
 \JJ(u) = &\int_0^1 (\sqrt{1+ (Dw^a(x))^2} -1) \, dx   +\int_0^1   |Dw^s |  -
  \l\int_0^1 a(x) \, F(u(x)) \, dx
\\ \ge &  \,  \sqrt{1+\|Dw^a\|_{L^1(0,1)}^2}   -1 +  \int_0^1   |Dw^s |
   - \l \int_0^1 a(x) \, F(u(x)) \, dx.
\end{split}
\end{equation}
On the other hand, since $q\in (0,1)$,    by the Poincar\' e-Wirtinger  inequality, we find that, for a.e. $x\in [0,1]$,
\begin{equation*}
|u(x)|^q-|r|^q \leq \big| |w(x)+r|^q-|r|^q \big|
   \leq |w(x) |^q \leq \|w\|_\infty^q \leq \|Dw\|^q,
\end{equation*}
where
$$
  \|Dw\| = \int_0^1 |Dw^a(x)| \, dx  + \int_0^1 |D^sw|
$$
is the variation of $w$.  Thus,   thanks to \eqref{iii.1}, we  find that
\begin{align*}
  \int_0^1 a (x) F(u(x)) \, dx & = \int_0^1 a (x) \left(F(u(x)) - \kappa |u(x)|^q \right)\, dx \\ &
\quad +\kappa  \int_0^1 a (x)  \left( |u(x)|^q -  |r|^{q} \right) \, dx+\kappa   |r|^{q}\int_0^1 a(x) \, dx \\ & \leq \int_0^1 |a(x)|   \left( \e|u(x)|^q+c_\varepsilon \right)\,dx +\kappa \|a\|_{L^1(0,1)}  \|Dw\|^q + \kappa   |r|^{q}\int_0^1 a(x)\, dx
\\ &  \leq \|a\|_{L^1(0,1)} \, \left( (\varepsilon+\kappa)\|Dw\|^q +\e  |r|^q +c_\varepsilon \right)+
\kappa   |r|^{q}\int_0^1 a(x) \, dx .
\end{align*}
Consequently, applying this estimate to \eqref{iii.2} easily yields
\begin{align*}
\JJ(u)  \geq \;   &   \|Dw\| - \l \|a\|_{L^1(0,1)}  ( \varepsilon +\kappa)  \|Dw\|^q \\
    & - \l  \left( \kappa \int_0^1  a(x)\, dx  + \varepsilon\|a\|_{L^1(0,1)} \right)   |r|^{q}- \l c_\varepsilon \|a\|_{L^1(0,1)} -1.
\end{align*}
Thus, as we are assuming that $\int_0^1a(x) \, dx <0$, we can take    $\varepsilon >0$ so small that
\begin{equation}
 \kappa \int_0^1  a(x)\, dx  + \varepsilon\|a\|_{L^1(0,1)}<0.
\end{equation}
Hence, it is plain  that we can find  two
 constants $A>0, B>0$  such that
\begin{equation}
\label{iii.3}
\JJ(u) \ge
 A \big(\|Dw\| +  |r|^q \big) - B.
 \end{equation}
Condition \eqref{iii.3} implies that
\begin{equation*}
\lim_{\|u\|_{BV(0,1)} \to +\infty} \JJ(u)  =+\infty
\quad \text{and} \quad
\inf_{u\in BV(0,1) } \JJ(u) >- \infty .
\end{equation*}
Since  the functional $\JJ$ is lower semicontinuous with respect to the $L^1$-convergence in $BV(0,1)$, it is a classical fact (see, e.g., \cite{Em}) that $\JJ$ admits a global minimizer $u  \in BV(0,1)$. Moreover, by \cite{Anz83},   any minimizer of $\JJ$ is a bounded variation solution of the problem \eqref{1.1}.
\par
Next, we will prove that, thanks to the choice  $\l >\l_0$, $u$ is non-trivial. To this end, it suffices to show that $\JJ(u)<0$. Condition \eqref{Fp}, with $p=1$,  implies that,  for every sequence $\{s_n\}_{n\ge1}$, with $s_n>0$ for all $n\ge 1$,  such that
\begin{equation*}
\lim_{n\to\infty} s_n = 0
\qquad\text{and}\qquad
\lim_{n\to\infty} \frac{F(s_n)}{s_n^2} = \frac{1}{2},
\end{equation*}
one has that
\begin{equation*}
\lim_{n\to\infty} \frac{F( s_n\varphi(x))}{s_n^2\varphi^2(x)} = \frac{1}{2} \quad \text{uniformly in } x \in [0,1].
\end{equation*}
 Thus, we get
\begin{align*}
\lim_{n\to\infty}
\int_0^1  & \Big( \frac{(D^a\varphi(x))^2}{1+ \sqrt{1+s_n^2 (D^a\varphi(x))^2}}
   -  \l a(x)  \, \frac{F(s_n\varphi(x))}{s_n^2\varphi^2(x)} \varphi^2(x) \Big) \,  dx
\\
&
= \frac{1}{2} \int_0^1 \Big( (D^a\varphi(x))^2   -\l  a(x) \varphi^2(x) \Big) \,  \mathrm{d}x
  = \frac{1}{2} \int_0^1 \Big( 1 -\frac{\l}{\l_0}  \Big) (D^a\varphi(x))^2(x)  \,  dx  < 0.
\end{align*}
We therefore  can  conclude that
\[
\JJ(s_n\varphi ) = s_n^2
\int_0^1 \Big( \frac{(D^a\varphi(x))^2}{1+ \sqrt{1+s_n^2 (D^a\varphi(x))^2}}
-  \l a(x)  \, \frac{F(s_n\varphi(x))}{s_n^2\varphi^2(x)} \varphi^2(x) \Big) \,  dx <0,
\]
  for large $n$. This clearly implies that  $\JJ(u)<0$.
\par
Finally, we  show that $u$ can be chosen to be positive. Indeed, since
$$
\JJ(|u|) = \JJ(u) \quad \hbox{for all}\;\; u\in BV(0,1),
$$
we see that if $u$ is a global minimizer of $\JJ$, then $|u|$ is a global minimizer too.
\end{proof}

We recall that $\mc{S}_{bv}^+$ denotes  the set of couples $ (\lambda, u)  \in [0,\infty) \times BV(0,1)$ such that $(\l,u)$ is a positive (bounded variation) solution of \eqref{1.1},  together with $(0,0)$ and $(\l_0,0)$, its two possible bifurcation points from the trivial line  $(\l,0)$, $\l\in\RR$. Similarly, $\mc{S}_r^+$ stands for
  the set of couples $ (\lambda, u)~\in~[0,\infty) \times \mc{C}^1[0,1]$ such that $(\l,u)$ is a positive regular solution of \eqref{1.1}, together with $(0,0)$ and $(\l_0,0)$. Finally, $ \mc{S}_s^+=\mc{S}_{bv}^+\setminus \mc{S}_r^+$ is the set of the singular positive solutions of \eqref{1.1}.

 The following result, going back to \cite[Thm. 3.1 and 3.2]{LOR2},
establishes the existence of two  components of $\mc{S}_r^+$ bifurcating
from $(\l,0)$ at $\l=0$ and at $\l=\l_0$. By a {\em component} of $\mc{S}^+_r$ it is meant a
closed connected subset  of $\mc{S}^+_r$, equipped with the topology of  $\RR\times   \mc{C}^1[0,1]$,
which is  maximal for the inclusion. Note that the regularity
requirements on $f(u)$ in the next result have been slightly relaxed with respect to those imposed in \cite{LOR2}; they anyhow allow to apply the results in \cite{LG01}, in particular \cite[Thm. 6.4.3]{LG01}, to achieve the conclusions. Subsequently, we denote by $\mc{P}_\l$ the $\l$-projection operator,
$\mc{P}_\l(\l,u)=\l$.

\begin{theorem}
\label{thiii.2} Assume that
$f\in\mc{C}(\R) \cap \mc{C}^1(-\eta,\eta)$, for some $\eta>0$, $f'(0)=1$,
and   { $(a_1)$}. Then, the following assertions hold:
\begin{enumerate}
\item[{\rm (a)}] there exists an unbounded  component $\mf{C}_{r,\l_0}^+$
of $\mc{S}^+_r$ such that
\begin{itemize}
\item $(\l_0,0)\in  {\mf{C}}_{r,\l_0}^+$;
\item $\mc{P}_\l(\mf{C}_{r,\l_0}^+)\subseteq [0,\infty)$;
\item $\l=\l_0$ if $(\l,0)\in \mf{C}_{r,\l_0}^+$ with $\l \neq 0$;
\item $\min u >0$  if $(\l,u)\in \mf{C}_{r,\l_0}^+\setminus\{(0,0),(\l_0,0)\}$.
\end{itemize}
\item[{\rm (b)}]  there exists an unbounded component $\mf{C}_{r,0}^+$ of $\mc{S}^+_r$
such that
\begin{itemize}
\item $\{0\} \times [0, \infty) \subseteq \mf{C}_{r,0}^+$;
\item $\mc{P}_\l(\mf{C}_{r,0}^+)\subset [0,\infty)$;
\item $\l=\l_0$ if $(\l,0)\in \mf{C}_{r,0}^+$ with $\l \neq 0$;
\item $\min u >0$   if $(\l,u)\in \mf{C}_{r,0}^+\setminus\{(0,0),(\l_0,0)\}$.
\end{itemize}
\end{enumerate}
  Moreover,
when $(f_1)$ holds, and hence $F(u)$ is sublinear at infinity, we have that
\begin{equation}
\label{2.1}
  \mf{C}_{r,0}^+\cap \mf{C}_{r,\l_0}^+=\emptyset,
\end{equation}
and, in particular, $(0,0)\notin \mf{C}_{r,\l_0}^+$ and $(\l_0,0)\notin \mf{C}_{r,0}^+$.
\end{theorem}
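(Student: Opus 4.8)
The plan is to deduce the existence and basic properties of the two components $\mf{C}_{r,\l_0}^+$ and $\mf{C}_{r,0}^+$ from the abstract bifurcation theory of \cite{LG01}, applied to the regular formulation \eqref{1.3}, and then to establish the separation \eqref{2.1} by a direct argument exploiting the sublinearity \eqref{Fq} of the potential at infinity. For parts (a) and (b), I would first recast \eqref{1.3} as a fixed point equation $u = \l\,\KK(a f(u)g(u'))$ in $\mc{C}^1[0,1]$, where $\KK$ is the solution operator of $-u''=\cdot$ with Neumann conditions acting on the mean-zero subspace (together with the compatibility relation $\int_0^1 a f(u) g(u')\,dx=0$); the hypotheses $f\in\mc{C}(\R)\cap\mc{C}^1(-\eta,\eta)$ and $f'(0)=1$ give that the linearization at $u=0$ is exactly the weighted Neumann problem \eqref{i.10}, whose positive principal eigenvalue is $\l_0$ by the Brown--Lin theorem. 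Then $\l=0$ (the boundary of the parameter range, with the whole line $\{0\}\times[0,\infty)$ of constant solutions) and $\l=\l_0$ are the only candidates for bifurcation from the trivial line $(\l,0)$; at $\l_0$ the principal eigenvalue is algebraically simple with a strictly positive eigenfunction $\v$, so Crandall--Rabinowitz/Rabinowitz global bifurcation (as packaged in \cite[Thm. 6.4.3]{LG01}) yields an unbounded component emanating from $(\l_0,0)$, and a parallel construction (or the treatment of $\l=0$ as a bifurcation-from-a-vertical-segment point) yields the component through $\{0\}\times[0,\infty)$. Positivity of $u$ along these components away from the bifurcation points follows from the eigenvalue characterization already recalled in the Introduction: a strictly positive regular solution forces $\s[-D^2-\l a f(u)u^{-1}g(u');\mc{N},(0,1)]=0$ with $u$ a principal eigenfunction, hence $\min u>0$ by \cite[Thm. 7.10]{LG13}; a standard connectedness/continuity argument propagates $\min u>0$ along the component once it holds near the bifurcation point, ruling out $u$ touching zero in $(0,1)$. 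The statement $\l=\l_0$ whenever $(\l,0)\in\mf{C}^+_{r,\cdot}$ with $\l\ne0$ is just the fact that $\l_0$ is the unique nonzero bifurcation value.

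For the final claim \eqref{2.1}, I would argue by contradiction: suppose $\mf{C}_{r,0}^+\cap\mf{C}_{r,\l_0}^+\ne\emptyset$; then, since both are unbounded connected subsets of $\mc{S}^+_r$ meeting the trivial line only at $(0,0)$ and $(\l_0,0)$, their union $\mf C$ is a connected set containing both $(0,0)$ and $(\l_0,0)$, and in particular there is a connected subset of $\mc S^+_r$ joining a neighborhood of the segment $\{0\}\times[0,\infty)$ to $(\l_0,0)$. The key is then an a priori bound forcing $\mc{P}_\l$ to stay bounded away from the relevant regime: under $(f_1)$ with $p=1$, integrating \eqref{1.3} over $[0,1]$ gives $\int_0^1 a(x) f(u) g(u')\,dx=0$, and on any sequence of positive regular solutions with $\|u_n\|_\infty\to\infty$ one has $f(u_n(x))/u_n^{-q}(x)\to h$ and (after normalizing and controlling $u_n'$ via the integrated curvature identity, exactly as in Lemma \ref{leii.2}) $u_n\|u_n\|_\infty^{-1}\to$ const and $g(u_n')\to$ something of one sign, whence the sign-definite character of $\int_0^1 a\,dx<0$ yields a contradiction unless $\|u_n\|_\infty$ stays bounded — so along any component, large-norm solutions force $\l$ into a restricted range. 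Combined with Lemma \ref{leii.2} (small $\l$ forces $u\to0$ in $W^{2,1}$) and Theorem \ref{thii.1} (no positive solutions at all for small $\l>0$ when $p=1$), the component $\mf{C}_{r,0}^+$ issuing from $\{0\}\times[0,\infty)$ can only be connected to the trivial line again at $(0,0)$ or $(\l_0,0)$; an elementary degree/Whyburn-type argument then shows that if it reached $(\l_0,0)$ it would have to pass through the linearized problem at $\l_0$ while simultaneously supporting arbitrarily large solutions at $\l=0$, which is incompatible with the a priori structure. Hence $\mf{C}_{r,0}^+$ and $\mf{C}_{r,\l_0}^+$ are disjoint, and in particular $(0,0)\notin\mf{C}_{r,\l_0}^+$ and $(\l_0,0)\notin\mf{C}_{r,0}^+$.

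I expect the main obstacle to be the separation argument \eqref{2.1}, specifically making rigorous the claim that sublinearity at infinity prevents the two components from merging. The subtlety is that a merged component could in principle wander through large-$\l$, large-norm solutions and come back; the right tool is presumably a combination of (i) the a priori bound sketched above, showing that on $\mf{C}_{r,0}^+$ the solutions with $u(0)$ small must have $\l$ near $\l_0$ (by the same normalization argument that underlies Lemma \ref{leii.2}, run near the bifurcation point rather than near $\l=0$), versus (ii) the fact that $\mf{C}_{r,0}^+$ contains the entire vertical segment $\{0\}\times[0,\infty)$, i.e.\ arbitrarily large solutions at $\l=0$, which cannot be continuously connected inside $\mc S^+_r$ to the small solutions near $(\l_0,0)$ without violating the structure of the bifurcation diagram forced by $p=1$ sublinear-at-infinity. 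The routine parts — the fixed-point reformulation, compactness of $\KK$, verification of the Crandall--Rabinowitz transversality condition at $\l_0$, and the propagation of $\min u>0$ — I would treat briskly, citing \cite{LG01, LG13, BL}.
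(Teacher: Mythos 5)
For parts (a) and (b) your route is essentially the paper's: the paper gives no independent proof of these assertions, referring instead to \cite[Thms. 3.1 and 3.2]{LOR2} and observing that the relaxed regularity hypotheses still permit the application of \cite[Thm. 6.4.3]{LG01}; your sketch (fixed-point reformulation, simplicity of the principal eigenvalue $\l_0$ via Brown--Lin, global bifurcation, propagation of $\min u>0$) is the expected content of those references, so there is nothing to object to there.

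The problem is your treatment of \eqref{2.1}, which is both over-engineered and, in its key step, incorrect. The a priori bound you propose --- that on any sequence of positive regular solutions with $\|u_n\|_{L^\infty(0,1)}\to\infty$ one gets $u_n/\|u_n\|_{L^\infty(0,1)}\to\mathrm{const}$ and hence a contradiction with $\int_0^1 a\,dx<0$ --- is false in general. The normalization argument of Lemma \ref{leii.2} produces $u_n/\|u_n\|_{L^\infty(0,1)}\to1$ only because $\l_n\to0$ forces $\|u_n'\|_{L^\infty(0,1)}\to0$ via the integrated curvature identity; for $\l$ bounded away from $0$ there is no such flatness, and indeed Theorem \ref{thiii.1} and the whole of Section \ref{s6} show that arbitrarily large regular solutions do exist for large $\l$ (growing like $\l^{1/q}$ on $[0,z)$). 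So "large norm forces $\l$ into a restricted range'' cannot be extracted from sublinearity in the way you describe, and the subsequent ``degree/Whyburn-type argument'' is left too vague to repair this. None of it is needed: the paper obtains \eqref{2.1} as a one-line consequence of Theorem \ref{thii.1}. Under the standing hypothesis $f'(0)=1$, adding $(f_1)$ forces $p=1$, so Theorem \ref{thii.1} yields $\e\in(0,\l_0)$ such that \eqref{1.1} has no positive solution with $\l\in(0,\e)$. Hence $\mc{S}^+_r\cap\big((0,\e)\times\mc{C}^1[0,1]\big)=\emptyset$, and $\mc{S}^+_r$ splits into the two relatively open and closed pieces $\mc{S}^+_r\cap(\{0\}\times\mc{C}^1[0,1])$ and $\mc{S}^+_r\cap([\e,\infty)\times\mc{C}^1[0,1])$. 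Any connected subset of $\mc{S}^+_r$ lies entirely in one piece; $\mf{C}_{r,0}^+$ meets the first (it contains $\{0\}\times[0,\infty)$) and $\mf{C}_{r,\l_0}^+$ meets the second (it contains $(\l_0,0)$), so they are disjoint, and in particular $(0,0)\notin\mf{C}_{r,\l_0}^+$ and $(\l_0,0)\notin\mf{C}_{r,0}^+$. You had the right ingredient in hand --- you cite Theorem \ref{thii.1} --- but you did not use it as the separating device it is, and the machinery you built around it does not stand on its own.
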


 The last assertion of Theorem \ref{thiii.2} is a direct consequence of Theorem \ref{thii.1}  and shows that,  much like in the cases when $F(u)$ is superlinear at infinity, or asymptotically linear at infinity, which have been previously treated in \cite{LOR2}, \cite{LGO-19} and \cite{LGO-20}, also when $F(u)$ is sublinear at infinity the two components $\mf{C}_{r,0}^+$ and $ \mf{C}_{r,\l_0}^+$ are disjoint.

When, in addition, $f \in \mc{C}^2(-\eta,\eta)$, then one can invoke
\cite[Thm. 1.7]{CR} in order to complement Theorem \ref{thiii.2} with the next result, of a local nature, which basically goes back to
\cite[Thms. 4.1 and 4.2]{LOR2}. Theorem \ref{thiii.3}  also corrects a wrong assertion made in \cite[Thm 4.2]{LOR2} concerning the bifurcation directions.

\begin{theorem}
\label{thiii.3}
Assume that $f\in\mc{C}(\R)\cap \mc{C}^\nu(-\eta,\eta)$, for some $\eta >0$ and $\nu\geq 2$, $f'(0)=1$,   and $(a_1)$. Then, in a neighborhood of $(\l,u)=(0,0)$, the component
$\mf{C}_{r,0}^+$ consists of the curve $\{(0,\kappa) : \kappa \in [0,\kappa_0)\}$ for some $\kappa_0>0$. Similarly, setting
$$
  V  =  \left\{ v \in \mc{C}^1[0,1]\; : \; \int_0^1 v(x)\v(x) \,dx =0\right\},
$$
where $\v$ is any positive eigenfunction associated with \eqref{i.10}, there exist $\e>0$ and two maps of class $\mc{C}^{\nu-1}$, $\l : (-\e,\e)\to \R$ and $v: (-\e,\e)\to V$, such that
\begin{itemize}
\item[{\rm (i)}] $\l(0)=\l_0$ and $v(0)=0$;
\item[{\rm (ii)}] $(\l(s),s(\v+v(s)))$ solves \eqref{1.1} for all $s \in (-\e,\e)$;
\item[{\rm (iii)}] in a neighborhood of $(\l,u)=(\l_0,0)$, $\mf{C}_{r,\l_0}^+$ consists of the smooth arc of curve $(\l(s),s(\v+v(s)))$, with $s\in [0, \varepsilon)$.
\end{itemize}
Moreover, the following holds:
\begin{align}
\label{iii.4}
  \l'(0)
  =  -  \l_0  f''(0)\frac{\int_0^1\v(x) (\v'(x))^2  \,dx}{\int_0^1 (\v'(x))^2  \,dx}
\end{align}
and, if $\nu\geq 3$ and $f''(0)=0$,
\begin{align}
\label{iii.5}
  \l''(0)   = - \frac{ f'''(0)   \int_0^1 \v^2(x)(\v'(x))^2\,dx+
  \int_0^1  (\v'(x))^4 \,dx} {\int_0^1 (\v'(x))^2  \,dx}.
\end{align}
Thus, the component $\mf{C}_{r,\l_0}^+$ bifurcates subcritically at $\l=\l_0$ if $f''(0)> 0$, or if $f''(0)=0$ and
$$
f'''(0)  > - \frac{\int_0^1 (\v(x)')^4\,dx} {\int_0^1 \v^2(x) (\v'(x))^2 \,dx},
$$
 while it does it supercritically if $f''(0)<0$, or if $f''(0)=0$ and
$$
f'''(0)  < - \frac{\int_0^1 (\v(x)')^4\,dx} {\int_0^1 \v^2(x) (\v'(x))^2 \,dx}.
$$
\end{theorem}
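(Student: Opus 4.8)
The plan is to split the argument into the purely local bifurcation analysis at $\l=\l_0$ (parts (i)--(iii) and formulas \eqref{iii.4}--\eqref{iii.5}) and the trivial-line description near $(0,0)$, the latter being a soft consequence of Theorem~\ref{thii.1}. First I would handle the behaviour near $(\l,u)=(0,0)$: by Theorem~\ref{thii.1} the problem has no positive solution for small $\l>0$, hence in a neighbourhood of $(0,0)$ the component $\mf{C}_{r,0}^+$ can only contain points with $\l=0$; since $\{0\}\times[0,\infty)\subseteq\mf{C}_{r,0}^+$ by Theorem~\ref{thiii.2}(b) and every $(0,\kappa)$ with $\kappa\ge0$ is a (constant) solution of \eqref{1.1}, the component locally coincides with the segment $\{(0,\kappa):\kappa\in[0,\kappa_0)\}$ for $\kappa_0$ small.

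For the local picture at $(\l_0,0)$ I would set up \eqref{1.3} as an abstract equation $\mc{G}(\l,u)=0$ in $X=\{u\in\mc{C}^2[0,1]:u'(0)=u'(1)=0\}$ (or $\mc{C}^1$ with the obvious fixed-point reformulation), $\mc{G}(\l,u)=u+\l(-D^2)^{-1}\!\big(a\,f(u)g(u')\big)$ suitably interpreted, whose linearisation at $u=0$ is $I-\l(-D^2)^{-1}(a\,\cdot\,)$ because $f'(0)=1$ and $g(0)=1$. By the Brown--Lin result recalled after \eqref{i.10}, $\l_0$ is the unique positive eigenvalue of \eqref{i.10} and it is simple, with eigenfunction $\v>0$; transversality holds since $\int_0^1 a\v^2\,dx\ne0$ (indeed $\int_0^1 a\v^2<0$ is exactly what makes $\l_0$ the unique positive eigenvalue). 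Hence the Crandall--Rabinowitz theorem \cite{CR} (here \cite[Thm. 1.7]{CR}, using $f\in\mc{C}^\nu(-\eta,\eta)$ with $\nu\ge2$, which gives $\mc{G}\in\mc{C}^{\nu-1}$) yields a $\mc{C}^{\nu-1}$ curve $s\mapsto(\l(s),s(\v+v(s)))$ through $(\l_0,0)$ with $v(s)\in V$, $\l(0)=\l_0$, $v(0)=0$, and this curve is the only nontrivial solution set in a neighbourhood of $(\l_0,0)$; intersecting with the cone of positive functions and invoking the strong positivity ($\min u>0$) from Theorem~\ref{thiii.2}(a) gives that the part with $s\in[0,\e)$ is exactly $\mf{C}_{r,\l_0}^+$ locally. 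This establishes (i)--(iii).

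The bifurcation-direction formulas are the computational heart. I would differentiate the identity $\mc{G}(\l(s),s(\v+v(s)))=0$, or equivalently plug $u_s=s\v+s\,v(s)$ into \eqref{1.3} and expand in powers of $s$, using the Taylor expansions $f(u)=u+\tfrac12 f''(0)u^2+\tfrac16 f'''(0)u^3+o(u^3)$ and $g(u')=(1+(u')^2)^{3/2}=1+\tfrac32(u')^2+o((u')^2)$, so that $f(u_s)g(u_s')=u_s+\tfrac12 f''(0)u_s^2+\big(\tfrac16 f'''(0)u_s^3+\tfrac32 u_s(u_s')^2\big)+\cdots$. Projecting the $s^2$-coefficient onto $\v$ (Fredholm alternative: the equation at order $s^2$ is $-v_2''-\l_0 a v_2 = \l_0 a(\tfrac12 f''(0)\v^2)+\l'(0)a\v$, solvable iff the right side is $L^2$-orthogonal to $\v$) and integrating by parts — using $-\v''=\l_0 a\v$ to rewrite $\int a\v^3$ and $\int a\v$-type terms via $\v'$ — produces \eqref{iii.4}; note the $(\v')^2$ in the numerator comes precisely from the $g$-term $\tfrac32 u_s(u_s')^2$ at order $s^3$ contributing at order $s^2$ only if... actually the $g$-contribution is order $s^3$, so at order $s^2$ only $f''(0)$ enters, and an integration by parts turns $\l_0\int a\v^3$ into $-\int \v''\v^2=\,2\int\v(\v')^2$ up to the correct constant, matching \eqref{iii.4}. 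When $f''(0)=0$ one has $\l'(0)=0$, $v(s)=O(s^2)$, and repeating at order $s^3$ with the solvability condition involving $f'''(0)\int a\v^4$ and $\tfrac32\int a\v(\v')^2\cdot\v$-type terms, again integrating by parts via $-\v''=\l_0 a\v$, yields \eqref{iii.5}; the sign discussion (subcritical vs.\ supercritical) is then immediate by reading off the sign of $\l'(0)$, resp.\ $\l''(0)$. The main obstacle I expect is bookkeeping in this order-by-order expansion: correctly tracking which terms of $f$ and of $g$ contribute at each order, keeping the $v(s)$-corrections (which are not needed for $\l'(0)$ but do matter — actually they don't, by the $V$-orthogonality and self-adjointness the $v_2$-term drops out of the order-$s^3$ solvability condition, which is the clean reason \eqref{iii.5} has no $v$ in it), and getting the integration-by-parts constants right so that the stated quotients emerge exactly; the correction of the erroneous direction statement in \cite[Thm. 4.2]{LOR2} presumably amounts to the sign or a missing factor in precisely this computation, so care there is essential.
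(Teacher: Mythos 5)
Your proposal is correct in substance and follows essentially the same route as the paper: the paper likewise obtains (i)--(iii) from the Crandall--Rabinowitz theorem (by citing \cite[Thms. 4.1 and 4.2]{LOR2}) and derives \eqref{iii.4}--\eqref{iii.5} by exactly the order-by-order expansion of \eqref{1.3} along $u(s)=s(\v+v(s))$, projection onto $\v$ (self-adjointness kills the $v_1$, $v_2$ terms), and integration by parts via $-\v''=\l_0 a\v$ to convert $\int_0^1 a\v^3\,dx$, $\int_0^1 a\v^4\,dx$ and $\int_0^1 a\v^2(\v')^2\,dx$ into the stated quotients. Two small corrections: the transversality integral satisfies $\int_0^1 a\v^2\,dx>0$, not $<0$ (this is \eqref{iii.8}, since $\l_0\int_0^1 a\v^2\,dx=\int_0^1(\v')^2\,dx>0$), though only $\neq 0$ is needed; and your derivation of the local structure of $\mf{C}_{r,0}^+$ near $(0,0)$ from Theorem~\ref{thii.1} is not available under the hypotheses of this theorem, which do not include $(f_1)$ with $p\ge 1$ --- the paper instead takes this local description directly from \cite[Thm. 4.1]{LOR2}.
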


\begin{proof}
Since assertions (i)--(iii) follow from \cite[Thms. 4.1 and 4.2]{LOR2},
we only provide the proof of formulas \eqref{iii.4} and \eqref{iii.5}. In the course of this proof,
  in order to simplify the notation, the dependence on $x$ is not indicated. Set
\begin{equation*}
     u(s)=s(\v+v(s)) \qquad \hbox{for all}\;\; s\in (-\e,\e).
\end{equation*}
Substituting   $(\l,u)=(\l(s),u(s))$  in \eqref{1.3} and dividing by $s$, we find that
\begin{align*}
  -(\v+sv_1 + o(s))''  =(\l_0+s\l_1&+o(s)) a(x) (\v+sv_1+o(s)) \cr & \cdot
  \Big[ 1+ \frac{f''(0)}{2}s(\v+sv_1+o(s))+o(s)\Big]
  \Big[1+\frac{3}{2}(\v')^2s^2+o(s^2) \Big]
\end{align*}
for sufficiently small $s$,  where
$$ \l_1 = \l'(0), \quad v_1=\frac{dv}{ds}(0).$$
Particularizing at $s=0$, we get
\begin{equation}
\label{iii.6}
  -\v'' = \l_0 a \v,
\end{equation}
which is true by the definition of $\l_0$ and $\v$. Identifying terms of order $s$ yields
\[
  -v_1''=\l_0 a v_1 +\l_0 \frac{f''(0)}{2}a\v^2+ \l_1 a \v.
\]
Multiplying this equation by $\v$ and integrating by parts in $(0,1)$, we find from \eqref{iii.6} that
\begin{equation}
\label{iii.7}
  \frac{1}{2}\l_0f''(0)\int_0^1a\v^3 \, dx + \l_1 \int_0^1 a \v^2 \, dx =0.
\end{equation}
On the other hand, multiplying \eqref{iii.6} by $\v$ and $\v^2$, respectively, and integrating by parts in $(0,1)$, we get
\begin{equation}
\label{iii.8}
 \l_0 \int_0^1 a \v^2 \, dx=-\int_0^1 \v''\v \, dx=\int_0^1 (\v')^2\, dx >0
\end{equation}
and
\[
  \l_0 \int_0^1 a \v^3 \, dx=-\int_0^1 \v''\v^2 \, dx=\int_0^1 \v' (\v^2)'\, dx=2\int_0^1\v (\v')^2 \, dx,
\]
as $\v$ is positive and not constant.
Hence,  by eliminating $\l_1$ in \eqref{iii.7}, thanks to \eqref{iii.8},  we find that
\begin{equation}
\l'(0) = \l_1 = - \frac{1}{2}\l_0 f''(0) \frac{\int_0^1a\v^3 \, dx} {\int_0^1 a \v^2 \, dx}
=-\l_0 f''(0)  \frac{\int_0^1\v  (\v' )^2  \,dx}{\int_0^1 (\v' )^2  \,dx},
\end{equation}
thus proving  \eqref{iii.4}.
\par
Subsequently, we suppose $\nu \geq 3$ and $f''(0)=0$. Then, by \eqref{iii.4} we have that $\l_1=0$ and hence $-v_1''=\l_0 a v_1$. Thus, there exists $\a\in\R$ such that $v_1=\a\v$. Therefore, since $v_1\in V$, we find that $\a=0$, which implies $v_1=0$. Consequently, substituting $(\l(s),u(s))$ in \eqref{1.3} and dividing by $s$  yields
\begin{align*}
  -(\v+s^2v_2  + o(s^2))''  = (\l_0&+s^2\l_2+o(s^2)) a(x) (\v+s^2v_2+o(s^2)) \cr & \cdot
  \Big[ 1
  + \frac{f'''(0)}{6}s^2(\v +s^2v_2+o(s^2))^2+ o(s^2)\Big]
   \Big[1+\frac{3}{2} s^2(\v')^2+o(s^2) \Big],
\end{align*}
  where
$$
  \l_2 = \frac{1}{2} \frac{d^2\l}{d s^2}(0),\quad v_2= \frac{1}{2} \frac{d^2 v}{d s^2}(0).
$$
Consequently, identifying terms of order $s^2$, we obtain that
\begin{equation}
\label{iii.9}
  -v_2''=\l_0 a v_2 +\frac{3}{2} \l_0 a \v (\v')^2+ \l_2 a \v + \frac{f'''(0)}{6} \l_0a \v^3.
\end{equation}
Thus,  multiplying \eqref{iii.9} by $\v$ and integrating by parts in $(0,1)$ gives
\[
  \frac{3}{2}\l_0 \int_0^1 a \v^2(\v')^2 \, dx+\l_2 \int_0^1 a \v^2 \, dx
  +  \frac{f'''(0)}{6} \l_0 \int_0^1 a  \v^4 =0
\]
and hence, as  $ \int_0^1  a \v^2\, dx>0$ by \eqref{iii.8},
\begin{equation}
\frac{1}{2}\l''(0) = \l_2=  -  \frac{ \frac{1}{6}f'''(0) \l_0 \int_0^1a \v^4\,dx
+
 \frac{3}{2}\l_0  \int_0^1 a  \v^2(\v')^2\,dx}{\int_0^1 a \v^2\, dx}.
\end{equation}
On the other hand, multiplying \eqref{iii.6} by $\v^3$ and $\v (\v')^2$, respectively, and integrating by parts in $(0,1)$, we  get
\begin{equation}
 \l_0 \int_0^1 a \v^4 \, dx= -\int_0^1 \v''\v^3 \, dx = 3 \int_0^1 \v^2 (\v')^2 \, dx
\end{equation}
and
\[
  \l_0 \int_0^1 a \v^2 (\v')^2 \, dx
  =   -\int_0^1 \v (\v')^2\v''  \, dx
   =    -\int_0^1 \v   \frac{d}{dx}\Big(\frac{1}{3}(\v')^3\Big) \, dx
  = \frac{1}{3} \int_0^1 (\v')^4 \, dx>0.
\]
Thus,   by using \eqref{iii.8}, we can conclude that
\begin{equation}
\l''(0) =    -  \frac{  f'''(0) \int_0^1 \v^2 (\v')^2 \, dx +
\int_0^1 (\v')^4 \, dx}{\int_0^1 (\v' )^2  \,dx}
\end{equation}
and, therefore, \eqref{iii.5} is proven. The statements concerning the bifurcation directions are obvious consequences of  \eqref{iii.4} and \eqref{iii.5}.
\end{proof}

The next global bifurcation result holds true for bounded variation solutions of \eqref{1.1}.

\begin{theorem}
\label{thiii.4}
Assume    $(a_2)$ and $(f_3)$ with $p=1$.  Then, there  exist two  subsets  of  $\mc{S}_{bv}^+$, $\mf{C}^+_{bv,0}$ and $\mf{C}^+_{bv,\lambda_0}$, such that, for every $\rho>2$,

\begin{itemize}
\item $   \mf{C}^+_{bv,0} = \{0\} \times [0,\infty)$;

\item
$\mf{C}^+_{bv,0} \cap \mf{C}^+_{bv,\lambda_0} =\emptyset$;

\item
$\mf{C}^+_{bv,\lambda_0}$ is maximal in $\mc{S}^+_{bv}$ with respect to the inclusion, is connected in  $\RR \times BV(0,1)$,  having endowed $ BV(0,1)$ with the topology of the strict convergence  (cf. \cite[Def. 3.14]{AmFuPa}), and is  unbounded in $\R\times L^\rho(0,1)$;

\item
$ (\lambda,0) \in
\mf{C}^+_{bv,\lambda_0}$ if and only if $\l=\l_0$;

\item $\essinf u >0$ if $(\lambda, u) \in
\mf{C}^+_{bv,\lambda_0}$ with $u\neq0$;

\item there exists  a neighborhood $U$ of $(\lambda_0,0)$ in   $\RR \times L^\rho(0,1)$
such that $\mf{C}^+_{bv,\lambda_0} \cap U$ consists of regular solutions of \eqref{1.1}, i.e.,
\begin{equation}
\label{iii.10}
   \mf{C}^+_{bv,\l_0} \cap U =  \mf{C}^+_{r,\l_0} \cap U.
\end{equation}
\end{itemize}
\end{theorem}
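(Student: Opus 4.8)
The plan is to obtain $\mf{C}^+_{bv,\l_0}$ as the connected component emanating from $(\l_0,0)$ furnished by the global bifurcation theory for the curvature operator developed in \cite{LGO-19}, to identify $\mf{C}^+_{bv,0}$ by a direct argument at $\l=0$, and then to check the listed properties one by one. For $\mf{C}^+_{bv,0}$, first I would observe that at $\l=0$ the only positive bounded variation solutions of \eqref{1.1} are the positive constants: testing the identity \eqref{1.5} with $\phi=u$, which is admissible since $|Du^s|$ is trivially absolutely continuous with respect to itself, and $\l=0$ gives $\int_0^1(Du^a)^2/\sqrt{1+(Du^a)^2}\,dx+\int_0^1|Du^s|=0$, whence $Du=0$. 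Thus $\{0\}\times[0,\infty)\subseteq\mc{S}^+_{bv}$, it is connected in the strict topology, and the connected component of $(0,0)$ in $\mc{S}^+_{bv}$ reduces to it: indeed its $\l$-projection is a connected subset of $[0,\infty)$ containing $0$, and if it contained some $\l'>0$ it would contain every $\mu\in(0,\min\{\l',\l_0\})$, producing a positive solution of \eqref{1.1} for arbitrarily small $\l>0$ (recall that $(\mu,0)\notin\mc{S}^+_{bv}$ for $\mu\in(0,\l_0)$), which contradicts Theorem \ref{thii.1}. Hence $\mf{C}^+_{bv,0}=\{0\}\times[0,\infty)$.

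For $\mf{C}^+_{bv,\l_0}$, since $(f_3)$ yields $f\in\mc{C}^1(\RR)$, $f(0)=0$ and $f'(0)=1$, and $g(0)=1$, the linearization of the regularized equation in \eqref{1.3} at $u=0$ is the weighted eigenvalue problem \eqref{i.10}; by the Brown--Lin theorem \cite{BL} (see also \cite[Ch. 9]{LG13}) its only characteristic values possessing a positive eigenfunction are $0$ and $\l_0$. Applying the global bifurcation theorem of \cite{LGO-19} for positive solutions, formulated in $\RR\times L^\rho(0,1)$ with $\rho>2$, to the fixed point formulation of \eqref{1.1} associated with the curvature operator, one obtains a closed connected subset $\mf{C}^+_{bv,\l_0}$ of $\mc{S}^+_{bv}$, maximal for the inclusion, containing $(\l_0,0)$, connected also in $\RR\times BV(0,1)$ with the topology of strict convergence (the solutions along it enjoy the local $BV$-bounds which make this meaningful, see \cite{LGO-19}), and such that it is either unbounded in $\RR\times L^\rho(0,1)$ or it meets the trivial line at a characteristic value other than $\l_0$. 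Since the only couples $(\l,0)$ lying in $\mc{S}^+_{bv}$ are $(0,0)$ and $(\l_0,0)$, we get $(\l,0)\in\mf{C}^+_{bv,\l_0}$ only if $\l\in\{0,\l_0\}$. Now the $\l$-projection $\mc{P}_\l(\mf{C}^+_{bv,\l_0})$ is a connected subset of $[0,\infty)$ containing $\l_0$; if it contained $0$ it would contain $[0,\l_0]$, again yielding positive solutions of \eqref{1.1} for arbitrarily small $\l>0$ and contradicting Theorem \ref{thii.1}. Hence $0\notin\mc{P}_\l(\mf{C}^+_{bv,\l_0})$; in particular $\mf{C}^+_{bv,\l_0}$ does not reach $(0,0)$, so the second alternative is excluded, $\mf{C}^+_{bv,\l_0}$ is unbounded in $\RR\times L^\rho(0,1)$, it is disjoint from $\mf{C}^+_{bv,0}=\{0\}\times[0,\infty)$, and $(\l,0)\in\mf{C}^+_{bv,\l_0}$ if and only if $\l=\l_0$.

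It remains to prove the strict positivity and the local regularity \eqref{iii.10}. For the former, I would use \cite[Cor. 3.7]{LGO-19}: under $(a_2)$ every positive solution of \eqref{1.1} is nonincreasing on $[0,1]$ and belongs to $W^{1,1}$ near $x=1$, so $\essinf u=u(1)$. If $u(1)=0$ then, since $a(x)<0$ on $(z,1)$, the equation shows that $u'/\sqrt{1+(u')^2}$ is nondecreasing on $(z,1)$ and vanishes at $x=1$, so $u'\le0$ there; using $f(s)\ge\tfrac12 s$ for small $s\ge0$ (because $f'(0)=1$), a double-integration argument of Gronwall type near $x=1$ forces $u\equiv0$ on a left neighborhood of $1$, and then, by the Lipschitz character of $f$ and uniqueness for the corresponding ordinary differential equation, $u\equiv0$ on $(z,1]$ and hence on $[0,1]$, against $u\neq0$. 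Therefore $\essinf u=u(1)>0$ for every $(\l,u)\in\mf{C}^+_{bv,\l_0}$ with $u\neq0$.

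For the local regularity, observe that if $(\l,u)\in\mf{C}^+_{bv,\l_0}$ is close to $(\l_0,0)$ in $\RR\times L^\rho(0,1)$, then $\|u\|_{L^\rho(0,1)}$ is small, hence, by continuity of $f$, $f(0)=0$ and $L^\rho(0,1)\hookrightarrow L^1(0,1)$, one has $\l\,\|a\,f(u)\|_{L^1(0,1)}<1$; by the regularity result \cite[Cor. 3.5]{LGO-19} such a $u$ is then regular. Thus, for a suitable neighborhood $U$ of $(\l_0,0)$, $\mf{C}^+_{bv,\l_0}\cap U$ consists of regular solutions, and since, by Theorem \ref{thiii.3} (that is, \cite[Thms. 4.1 and 4.2]{LOR2} together with \cite[Thm. 6.4.3]{LG01}), the regular positive solutions of \eqref{1.1} in a neighborhood of $(\l_0,0)$ form exactly the local branch that constitutes $\mf{C}^+_{r,\l_0}$ there, shrinking $U$ if necessary gives \eqref{iii.10}. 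The step I expect to demand the most care is the strict positivity along the whole component, since it must be established for genuinely singular solutions too: ruling out a dead core at the convex endpoint $x=1$ relies on combining the fine boundary structure granted by $(a_2)$ with the nondegeneracy $f'(0)=1$, whereas the global bifurcation and the identification \eqref{iii.10} amount to a careful transcription of the machinery of \cite{LGO-19} and \cite{LOR2}.
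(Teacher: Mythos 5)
Your proposal follows essentially the same route as the paper: the component $\mf{C}^+_{bv,\l_0}$ is produced by the global bifurcation theory of \cite{LGO-19} in the $BV$ setting, and Theorem \ref{thii.1} is what excludes a return to the trivial line at $\l=0$, yielding at once the disjointness from $\mf{C}^+_{bv,0}$ and the unboundedness. The paper's proof is, however, much shorter than yours: it consists solely of verifying the hypothesis of \cite[Thm. 1.1]{LGO-19}, namely that $(f_3)$ implies $|f'(u)|\le \kappa\,(|u|^{\rho-2}+1)$ for every $\rho>2$ (immediate, since under $(f_3)$ the derivative $f'$ is continuous and tends to $0$ at infinity, hence is bounded), and then citing that theorem together with Theorem \ref{thii.1}; all the remaining bullet points --- the identification of the component through $(0,0)$, the strict positivity, and the local regularity near $(\l_0,0)$ --- are read off from \cite[Thm. 1.1]{LGO-19} rather than re-proved. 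Your additional direct arguments are sound in outline and buy some self-containedness, but note three small points. First, you never check the growth hypothesis, which is the one explicit step of the paper's proof. Second, in the strict-positivity argument the inequality you need is the upper bound $f(s)\le Ls$ near $s=0$ (local Lipschitz continuity of $f$, available from $(f_3)$), not the lower bound $f(s)\ge \tfrac12 s$; the cleanest conclusion is uniqueness for the Carath\'eodory initial value problem at $x=1$ with data $u(1)=u'(1)=0$, followed by the observation that $u'(z^+)=0$ forces regularity across $z$ and then backward uniqueness on $[0,z]$. Third, the final identification of $\mf{C}^+_{bv,\l_0}\cap U$ with $\mf{C}^+_{r,\l_0}\cap U$ should not be routed through Theorem \ref{thiii.3}, which requires $f\in\mc{C}^{2}$ near the origin, a regularity not guaranteed by $(f_3)$; Theorem \ref{thiii.2}, or simply the maximality of $\mf{C}^+_{r,\l_0}$ combined with your \cite[Cor. 3.5]{LGO-19} argument, suffices.
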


\begin{proof}
Condition $(f_3)$ implies that, for every $\rho>2$, there exists a constant $\kappa
>0
$ such that
$$
   |f'(u)| \le \kappa \, ( |u|^{\r-2} + 1)\qquad \hbox{for all}\;\; u \in\RR.
$$
Therefore, Theorem \ref{thiii.4} is a direct consequence of \cite[Thm.1.1]{LGO-19} and of  Theorem \ref{thii.1}.
\end{proof}

\begin{rem}
\label{reiii.1}
According to
\eqref{iii.10}, the small bounded variation solutions of \eqref{1.1}  must be regular solutions, and thus
$\mf{C}^+_{r,\lambda_0} \subseteq \mf{C}^+_{bv,\lambda_0}.$
One of the main goals of this paper is ascertaining, whether, or not, $ \mf{C}^+_{r,\lambda_0}$
is a proper subcomponent of $\mf{C}^+_{bv,\lambda_0}$. Note that, whenever  $\mf{C}^+_{r,\lambda_0} \varsubsetneq \mf{C}^+_{bv,\lambda_0}$,
regular solutions develop singularities  along the same component.
\end{rem}

\section{Bifurcation   from   $(\l,u)= (0,0)$  when $0<p<1$}
\label{s4}

\noindent
Throughout this section, we   assume that the functions $a(x)$ and $f(u)$   satisfy $(a_2)$ and $(f_1)$ with $0<p<1$, respectively.  The main goal of this section is establishing the existence of a component of the set  $\mc{S}_r^+$   of positive regular solutions bifurcating from $(0,0)$.  Our starting point is the  next result which is a consequence of \cite[Thm. 9.1]{LOR2}.

\begin{theorem}
\label{thiv.1}
Assume $(f_1)$, with $p\in (0,1)$, and $(a_2)$. Then,  there exists $\eta>0$ such that,
for every $\l\in (0,\eta)$, the problem \eqref{1.1} has at least one  positive regular solution, $u_\l$.
Moreover, one has that
\begin{equation}
\label{iv.1}
 \lim_{\l\to 0} \|u_\l\|_{\mc{C}^1[0,1]}=0,
\end{equation}
regardless each particular choice of $u_\l$.
\end{theorem}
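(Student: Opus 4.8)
The plan is to read off the existence part directly from \cite[Thm. 9.1]{LOR2} and to obtain the uniform decay \eqref{iv.1} from Lemmas \ref{leii.1} and \ref{leii.2}, using the compactness built into the latter.

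First I would invoke \cite[Thm. 9.1]{LOR2}. Under $(f_1)$ with $p\in(0,1)$ and $(a_2)$ (indeed the weaker $(a_1)$ already suffices here), the relevant functional is treated by the direct method of the calculus of variations: the subquadratic behavior \eqref{Fp} of $F$ at the origin, combined with $\int_0^1 a\,dx<0$, forces its global minimizer to be nontrivial for all small $\lambda>0$, while positivity is obtained, as usual, by replacing $u$ with $|u|$. This furnishes a constant $\eta_1>0$ such that for every $\lambda\in(0,\eta_1)$ problem \eqref{1.1} admits at least one positive (bounded variation) solution $u_\lambda$. Next I would set $\eta:=\min\{\eta_1,\overline\lambda\}$, where $\overline\lambda$ is the constant produced in the proof of Lemma \ref{leii.1} (recall that $\|f\|_\infty<\infty$ by \eqref{1.7}); then Lemma \ref{leii.1} guarantees that, for $\lambda\in(0,\eta)$, the solution $u_\lambda$ cannot be singular, hence $u_\lambda\in W^{2,1}(0,1)$ is a positive \emph{regular} solution of \eqref{1.1}. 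In particular $u_\lambda\in\mc{C}^1[0,1]$ by the continuous embedding $W^{2,1}(0,1)\hookrightarrow\mc{C}^1[0,1]$. This settles the first assertion.

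To establish \eqref{iv.1} I would argue sequentially, so as to capture the words ``regardless each particular choice of $u_\lambda$''. Let $\{\lambda_n\}_{n\ge1}\subset(0,\eta)$ be any sequence with $\lambda_n\to0$, and for each $n$ let $u_{\lambda_n}$ be any positive regular solution of \eqref{1.1} at $\lambda=\lambda_n$. Then $\{(\lambda_n,u_{\lambda_n})\}_{n\ge1}$ is a sequence of positive regular solutions with $\lambda_n>0$ and $\lambda_n\to0$, so Lemma \ref{leii.2} applies verbatim and gives $u_{\lambda_n}\to0$ in $W^{2,1}(0,1)$; by the embedding $W^{2,1}(0,1)\hookrightarrow\mc{C}^1[0,1]$ this yields $\|u_{\lambda_n}\|_{\mc{C}^1[0,1]}\to0$. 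Since the sequence $\{\lambda_n\}$ and the selections $u_{\lambda_n}$ were arbitrary, a standard subsequence argument shows $\lim_{\lambda\to0}\|u_\lambda\|_{\mc{C}^1[0,1]}=0$ for every choice of positive regular solution, which is precisely \eqref{iv.1}.

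The substance of the proof is entirely carried by the already available results: \cite[Thm. 9.1]{LOR2} for existence and Lemmas \ref{leii.1}--\ref{leii.2} for regularity and decay. Consequently I do not expect a genuine obstacle; the only point needing a brief check is the compatibility of the variational construction of \cite{LOR2} with the present hypotheses — in particular that $F$ is subquadratic at the origin, i.e.\ \eqref{Fp} with $p<1$ — which is exactly what makes the minimizer nontrivial and positive for all sufficiently small $\lambda>0$.
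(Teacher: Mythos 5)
Your proposal is correct and follows exactly the route the paper intends: the paper states Theorem \ref{thiv.1} without proof as a consequence of \cite[Thm. 9.1]{LOR2}, with the regularity for small $\l$ supplied by Lemma \ref{leii.1} and the decay \eqref{iv.1} by Lemma \ref{leii.2}, which is precisely your argument. Your sequential handling of the phrase ``regardless each particular choice of $u_\l$'' via Lemma \ref{leii.2} and the embedding $W^{2,1}(0,1)\hookrightarrow \mc{C}^1[0,1]$ is the right way to make that claim precise.
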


As the proof of  \cite[Thm. 9.1]{LOR2} is based on the direct method  of   calculus of variations, Theorem \ref{thiv.1} does not guarantee the existence of a component of $\mc{S}_r^+$ containing   these solution pairs $(\l,u_\l)$. By relying instead on the  construction of sub- and supersolutions and on the use of the topological degree,  we can complement  Theorem \ref{thiv.1} as follows.

\begin{theorem}
\label{thiv.2}
Assume $(f_1)$, with $p \in (0,1)$, and $(a_2)$.
Then, there is a component $\mf{C}_{r,0}^+$ of  $\mc{S}_r^+ $  such that
$[0,\l_*)\subseteq \mc{P}_\l(\mf{C}_{r,0}^+)$, for some $\l_*>0$,
and
\eqref{iv.1}  holds, for every $(\l,u_\l)\in\mf{C}_{r,0}^+$.
\end{theorem}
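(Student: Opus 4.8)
The plan is to build the component $\mf{C}^+_{r,0}$ by combining an a priori construction of ordered sub- and supersolutions for small $\l>0$, a topological degree computation in a suitable cone, and a standard global bifurcation/continuation argument in the space $[0,\infty)\times\mc{C}^1[0,1]$. First I would fix the functional-analytic setting: write \eqref{1.3} as a fixed point equation $u = \l \, \mc{K}(a\, f(u)\, g(u'))$ in $\mc{C}^1[0,1]$, where $\mc{K}$ denotes the solution operator of $-w''=\cdot$ under Neumann conditions composed with a projection onto the space orthogonal to the constants (so as to handle the one-dimensional kernel of the Neumann Laplacian), together with the compatibility condition $\int_0^1 a f(u)g(u')\,dx=0$. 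This operator is completely continuous, so Leray--Schauder degree is available. Because $0<p<1$, the potential $F$ is subquadratic at the origin and this is precisely what allows dead-core-type subsolutions that are genuinely positive for $\l>0$ small; this is the mechanism behind Theorem \ref{thiv.1}.

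Next I would construct, for each small $\l>0$, an explicit ordered pair $\un u_\l \le \overline u_\l$. For the supersolution one can take a large constant, or more precisely a constant $\overline u_\l = C\l^{1/(1-p)}$ chosen so that $\l a(x) f(\overline u_\l) g(0) \le 0$ in the weak sense required near the positive hump of $a$ — using $(a_2)$, that $a<0$ on $(z,1)$, and $\int_0^1 a<0$ — or, if a nonconstant supersolution is needed, a small perturbation of such a constant solving a linear Neumann problem with the right sign. For the subsolution I would use the dead-core profile: on the region where $a>0$ a small positive bump of size $O(\l^{1/(1-p)})$ built from the explicit solutions of $-u''=\l a(x)u^p$ available for the model nonlinearity (as in the Example in the introduction and in \cite[Ex. 2]{LOR1}), glued to $0$ on $(z,1]$, and checked to be a $W^{2,1}$ (or $BV$) subsolution across the gluing point because the derivative jumps downward there. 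The key quantitative point is that $\|\overline u_\l\|_{\mc{C}^1}\to0$ as $\l\to0$, which forces \eqref{iv.1} for any solution trapped between $\un u_\l$ and $\overline u_\l$; $(f_1)$ with $p<1$ and the local inversion $g(u')\approx1$ for small $u'$ make the sub/supersolution inequalities self-consistent for $\l$ below some $\l_*$.

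With the ordered pair in hand, the standard sub- and supersolution theorem for the Neumann curvature/semilinear problem (invoking the regularity that any such solution is $W^{2,1}$, as quoted in the introduction) yields a solution $u_\l\in[\un u_\l,\overline u_\l]$ and, more importantly, a nontrivial Leray--Schauder index: the degree of $I-\l\mc{K}(a f(\cdot)g((\cdot)'))$ on the order interval $[\un u_\l,\overline u_\l]$ (viewed as an open set in an appropriate translate of the positive cone) equals $1$. Since $\un u_\l\not\equiv 0$, this solution is positive and separated from the trivial branch for each fixed small $\l$. To promote this family to a connected component I would run the usual Whyburn-type continuation argument: consider the closed set $\SSS$ of positive regular solution pairs together with $(0,0)$ in $[0,\infty)\times\mc{C}^1[0,1]$, note that the nonzero index just established, combined with the a priori bounds on $[\un u_\l,\overline u_\l]$ and compactness of $\mc{K}$, prevents the connected component $\mf{C}^+_{r,0}$ of $(0,0)$ in $\SSS$ from being confined to any slice $\{\l=\text{const}\}$ and from collapsing to the trivial line for $\l\in(0,\l_*)$; hence $\mc{P}_\l(\mf{C}^+_{r,0})\supseteq[0,\l_*)$. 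Finally, \eqref{iv.1} along the component follows because every $(\l,u)\in\mf{C}^+_{r,0}$ with $\l<\l_*$ satisfies $0\le u\le\overline u_\l$ by the trapping (or, if one prefers, by a direct integration estimate on $u'$ exactly as in the proof of Lemma \ref{leii.2}), and $\|\overline u_\l\|_{\mc{C}^1}\to0$.

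The main obstacle I anticipate is the construction and verification of the subsolution across the nodal point $z$ of $a(x)$: one must check that the glued dead-core profile is a genuine (weak/$BV$) subsolution of the \emph{curvature} equation, not merely of the semilinear surrogate \eqref{1.3}, and that it stays below the supersolution uniformly in $\l$ while still being nontrivial — this is where the subquadraticity $p<1$ is essential and where the argument is delicate, since for $p\ge1$ no such positive subsolution exists (consistently with Theorem \ref{thii.1}). A secondary technical point is handling the Neumann kernel: because constants are in the kernel of $-D^2$ with Neumann conditions, the degree argument must be set up on the hyperplane $\{\int_0^1 v\,dx = \text{prescribed}\}$ or via the substitution $u = r + w$, $r=\int_0^1 u$, as in the proof of Theorem \ref{thiii.1}, and one must ensure the compatibility constraint $\int_0^1 a f(u)g(u')\,dx=0$ is respected throughout the homotopy.
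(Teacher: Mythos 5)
Your overall architecture (ordered sub- and supersolutions, a Leray--Schauder degree equal to $1$ on the corresponding open set, and the Leray--Schauder continuation theorem to produce a continuum, then Zorn's lemma and Lemma \ref{leii.2} for \eqref{iv.1}) is exactly the paper's strategy, but there are two points where your version either has a gap or is materially harder than what the paper does. The first and most serious is the subsolution. You propose a dead-core profile that is a positive bump on the region where $a>0$, \emph{glued to $0$ on $(z,1]$}. For the degree on the order interval to be well defined and homotopy-invariant you need \emph{strict} sub- and supersolutions, i.e.\ no solution may touch them; but for $0<p<1$ the problem genuinely admits dead-core solutions vanishing on part of $(z,1]$ (this is the whole point of \cite[Rem.~1.8, Ex.~2]{LOR1} and of the Example in the introduction), and such a solution would sit on the boundary of your order interval. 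The paper avoids this by taking the subsolution $\a$ equal to $c\varphi_1$ on $[0,z]$ ($\varphi_1$ the Dirichlet principal eigenfunction on $(0,z)$) extended by the \emph{strictly negative} linear function $c\varphi_1'(z)(x-z)$ on $(z,1]$, and then proving that every solution $v\ge\a$ satisfies $v>\a$ everywhere; positivity of the solutions in the continuum is recovered afterwards by a separate minimum-point argument. Without letting the subsolution dip below zero on $(z,1]$ your degree computation does not go through as stated. (A smaller slip in the same place: at the gluing point of a decreasing bump with the zero function the derivative jumps \emph{upward}, $u'(x_0^-)\le 0=u'(x_0^+)$, which is the correct convex-corner condition for a subsolution; your justification ``because the derivative jumps downward'' is backwards, although the configuration itself is the right one.)

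The second point is structural rather than fatal. The paper first rescales $u=\e v$ with $\e=\l^{1/(1-p)}$, so that \eqref{1.1} becomes the $\e$-perturbation \eqref{iv.3} of the fixed semilinear problem $-v''=a(x)|v|^p\sgn(v)$; the sub- and supersolutions and the degree are then computed \emph{once}, for that limiting problem, on a single $\e$-independent open set $\Omega$, and the continuation runs over the uniform cylinder $[0,\e^*]\times\Omega$. In your unrescaled setting the order intervals shrink like $\l^{1/(1-p)}$ as $\l\to0$, so the ``tube'' on which you want to apply continuation pinches at $\l=0$; connecting the slices and showing the component actually reaches $(0,0)$ then requires extra care that your Whyburn-type sketch does not supply. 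Relatedly, the Neumann kernel issue you worry about (projections, the compatibility constraint $\int_0^1 af(u)g(u')\,dx=0$ along the homotopy) disappears entirely if, as in the paper, one writes the fixed-point operator through $(-D^2+1)^{-1}$, i.e.\ solves $-w''+w=a|v|^p\sgn(v)g(\e v')h(\e v)+v$; I would recommend adopting both the rescaling and this formulation, after which your degree and continuation steps become exactly those of the paper.
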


\begin{proof}
Without loss of generality, we can suppose in the course of this proof that $f\in\mc{C}(\R)$ is an odd function.
By performing the change of variable
\begin{equation}
\label{iv.2}
 u=\e v,\qquad \e= \l^\frac{-1}{p-1},
\end{equation}
the problem \eqref{1.1}, or \eqref{1.3}, can be equivalently written in the form
\begin{equation}
\label{iv.3}
\left\{ \begin{array}{ll}
-v'' =a(x)|v|^{p }\, \sgn(v) \, g(\e v' ) \,  h(\e v ),
\qquad  0<x<1, \\[1ex] v'(0)=v'(1)=0,
\end{array}\right.
\end{equation}
where $g$ is defined in   \eqref{1.4} and
\begin{equation}
\label{h}
 h(u) = \left\{ \begin{array}{ll} \frac{f(u)} {|u|^{p} \sgn(u)}  & \quad \hbox{if} \;\; u\neq0 ,\\[1ex]
 1   & \quad \hbox{if} \;\; u =0. \end{array}\right.
\end{equation}
 According to \eqref{1.2}, the problem \eqref{iv.3} perturbs, as $\e>0$ separates away from $0$, from the semilinear   problem
\begin{equation}
\label{iv.4}
\left\{
\begin{array}{ll} -v'' =a(x) \, |v|^{p} \, \sgn(v),
\qquad 0<x<1, \\[1ex] v'(0)=v'(1)=0. \end{array}\right.
\end{equation}
We claim that the problem \eqref{iv.4} admits a   subsolution $\alpha$ and a supersolution  $\beta$, with $\alpha(x) <  \beta(x) $ for all $x \in [0,1],$ such that every possible solution $v$
of  \eqref{iv.4}, with $v \ge \alpha $ in $[0,1]$,
satisfies $v(x) > \alpha (x) $ for all $x\in [0,1]$ and, similarly,
every possible solution $v$
of  \eqref{iv.4}, with $v \le \beta $ in $[0,1]$,
satisfies $v(x) < \beta(x) $ for all $x\in [0,1]$.
This means that $\alpha$ and  $\beta$ are strict sub- and  supersolutions according to, e.g., \cite[Ch. III]{DCH}.

\smallskip
\noindent
{\it Construction of a subsolution.}  Let $\mu_1$ be  the unique  positive eigenvalue, with an associated positive eigenfunction $\varphi_1$, of the weighted   problem
\begin{equation*}
   \left\{ \begin{array}{ll} -\v''= \mu \,a(x) \, \v, & \quad 0<x<z, \\[1ex]
  \v(0)=\v(z)=0.  & \end{array}\right.
\end{equation*}
Then, pick  $c>0$  so small  that
\begin{equation}
\label{iv.5}
   \mu_1 [c\varphi_1(x)]^{1-p} \leq 1 \qquad \hbox{for all}\;\; x\in [0,z]
\end{equation}
and define
\begin{equation}
\label{iv.6}
  \alpha (x)  =   \left\{ \begin{array}{ll}  c\,\varphi_1(x) & \quad \hbox{if }   0\le x \le z, \\[1ex]
 c \, \varphi_1'(z) \, (x-z) & \quad \hbox{if }z<x\le 1.
 \end{array}\right.
\end{equation}
It is clear that $\alpha \in W^{2, \infty}(0,1)$ and,  since  $p\in(0,1)$,
by \eqref{iv.5} and \eqref{iv.6}, it satisfies
\begin{align}
-  \alpha'' (x)  &= -c \, \varphi''_1(x)  =  a(x) \, \mu_1\,  c \, \varphi_1(x)  = a(x) \, \mu_1\,  [c \, \varphi_1(x)]^{1-p}[c\v_1(x)]^p
\\[1ex] &  \le a(x) [ c \, \varphi_1(x) ]^p
= a(x) \alpha^p(x) \quad   \hbox{for a.e. }\; x\in  (0,z)
\label{iv.7}
\end{align}
and
$$
-  \alpha'' (x) = 0 \le  a(x)\, |\alpha (x) |^p \, \sgn(\alpha (x))\quad
\hbox{for a.e. }\; x\in  (z,1).
$$
Further, we have that
$$
  \a'(0) = c \varphi_1'(0) > 0,\quad \a'(1) = c \varphi_1'(z) <0.
$$
\par
Now, we will show that any  solution $v$ of  \eqref{iv.4} such that $v \ge \a $ in $[0,1]$ also   satisfies $v(x) > \a(x) $ for all $x\in [0,1]$. Indeed, set $w = v-\a $ and suppose, by contradiction, that $\min w = 0$. Let  $x_0\in [0,1]$ be such that $w(x_0) = 0$. Since $w\geq 0$ in $[0,1]$ and
\[
  w'(0) = -\a'(0) < 0 < -\a'(1) = w'(1),
\]
it follows that $x_0\in (0,1)$. Hence, $w'(x_0)=0$.
\par
Suppose that $x_0\in (0,z]$. Then, as $v\geq \alpha \ge 0$ in $[0,z]$ and $p>0$,  we can infer from \eqref{iv.7} that
\begin{align*}
-  w'' (x) & =-v''(x)+\a''(x)
\ge a(x) \, (v^p(x) -  \alpha^p(x)) \ge 0 \quad   \hbox{for a.e. }\;  x \in (0,z).
\end{align*}
Thus, $w$ is  concave in $[0,z]$ and hence
$$
w(x) \le w(x_0) +  w'(x_0) (x-x_0) = 0 \quad \text{for all  } x \in [0,z].
$$
This implies that $w=0$ in  $[0,z]$, contradicting   $w'(0) <0$.  Therefore, $x_0\in (z,1)$.  Since $w(x_0)=0$, we have  that
\[
    v(x_0) = \a (x_0) =c\v_1'(z)(x_0-z)<0.
\]
Thus, there exists an  interval $J\subseteq (z,1) $, with $x_0\in J$, such that $v(x)<0$ if $x\in J$ and
\begin{equation}
\label{iv.8}
-  w'' (x) = - v''(x) = a(x) \, |v(x)|^p \, \sgn(v(x))  > 0 \quad
   \hbox{for a.e.  } x \in J.
\end{equation}
Hence, $w$ is concave in $J$. Arguing as above, we find that $w=0$ in $J$,
thus contradicting the  strict inequality in \eqref{iv.8}. Thus, we have proved that $
 v(x)> \a(x)$ for all $x\in [0,1]$.

\smallskip
\noindent
{\it Construction of a supersolution.}
For every  $k>0$,   let $z_k$ denote  the  unique solution of the linear problem
\begin{equation}
\label{iv.9}
\left\{ \begin{array}{ll}
-z'' =\Big(a(x) - \displaystyle\int_0^1 a(t) \, dt \Big) k^p,
\qquad  0<x<1, \\[2mm] z'(0)=z'(1)=0,\;\;
\displaystyle\int_0^1 z(t) \, dt =0.
\end{array}\right.
\end{equation}
The Poincar\' e--Wirtinger inequality yields
\begin{equation}
\label{iv.10}
  \|z_k\|_{L^\infty(0,1)} \le \|z'_k\|_{L^1(0,1)}   \le \|z'_k\|_{L^\infty(0,1)}  \le  \|z''_k\|_{L^1(0,1)} \le 2 \|a\|_{L^1(0,1)} \, k^p.
\end{equation}
Consequently, since $p\in (0,1)$,
the function $\b$ defined by $\beta  =  z_k + k$ satisfies, for  sufficiently large $k>0$,
$ \min \beta >\max \a >0$. Moreover,   for a.e.  $x\in [0,1]$, we have  that
\begin{align}
-\b''(x) & =-z_k''
  = a(x)k^p-k^p\int_0^1a(t)\,dt
  \\ & = a(x) \b^p(x) + a(x)[ k^p-\b^p(x)]-k^p\int_0^1a(t)\,dt
\end{align}
and hence
\begin{equation}
\label{iv.12}
-\b''(x) = a(x) \b^p(x) + k^p\left[ a(x) \left(1-\left(1+\frac{z_k(x)}{k}\right)^p  \right) -
 \int_0^1 a(t) \, dt\right].
\end{equation}
Using \eqref{iv.10} and  the assumption $p\in (0,1)$, it is easily seen that
\begin{equation*}
 \lim _{k\to\infty} \left[  a(x) \left( 1-\Big(  1+ \frac{z_k(x)}{k}\Big)^p \right)
    \right] = 0 \quad   \hbox{uniformly  a.e. in }   [0,1].
\end{equation*}
Thus, since  $\int_0^1 a(t) \, dt\ <0$, we can conclude from \eqref{iv.12} that,
for sufficiently large $k>0$,
\begin{equation}
\label{iv.13}
-\b''(x) \geq  a(x) \b^p(x)  - \frac{1}{2}k^p \int_0^1 a(t) \, dt   \quad   \hbox{for  a.e. }  x\in  [0,1],
\end{equation}
and hence $-\b''(x)  >  a(x) \b^p(x)$    for a.e.  $x\in  [0,1]$. Thus, the function $\b$ is a
supersolution of  \eqref{iv.4} satisfying the boundary conditions.
\par
Now, we will show that any  solution $v$ of  \eqref{iv.4} such that $v \le \b $ in $[0,1]$ satisfies $v(x) < \b(x) $ for all $x\in [0,1]$.   Indeed, consider  the function  $w = \b- v $ and suppose, by contradiction, that $\min w = 0$. Let $x_0\in [0,1]$ be such that $w(x_0) = 0$.
Then, there exists an  interval $J\subseteq [0,1] $, with $x_0\in J$, such that for a.e $x\in J$
\begin{equation}
\label{iv.14}
|a(x)(\b^p(x) - |v(x)|^p\sgn(v(x)) )| < - \frac{1}{2}k^p \int_0^1 a(t) \, dt
\end{equation}
and hence,  by \eqref{iv.13}, \eqref{iv.4} and \eqref{iv.14},
\begin{align}
  -w''(x) &=-\b''(x)+v''(x)
  \\
  &\geq  a(x) \b^p(x)  -a(x) \, |v(x)|^{p} \, \sgn(v(x)) - \frac{1}{2}k^p \int_0^1 a(t) \, dt    >0.
  \label{iv.15}
\end{align}
Thus,  $w$ is concave in $J$.   Arguing similarly, we find that $w=0$ in $J$. So, contradicting the  strict inequality in \eqref{iv.15}. Therefore, we have shown that $v(x)<  \b(x)$ for all $x\in [0,1]$.

\smallskip
\noindent
{\it Degree computation.}
Note that any solution $v$ of \eqref{iv.4}  such that $\a\leq v\leq \b$ also satisfies
$$
\|v'\|_\infty \le \|a\|_{L^1(0,1)} \max\{|\min \a|^p, ( \max \b )^p\}.
$$
Pick a constant
\[
  C > \|a\|_{L^1(0,1)} \max\{|\min \a|^p, ( \max \b )^p\}
\]
and consider the open bounded subset of $ \mc{C}^1[0,1]$ defined by
$$
\Omega = \{v \in  \mc{C}^1[0,1] : \a(x) < v(x) < \b(x)\;
\hbox{for all}\; x\in [0,1], \;\|v'\|_{L^\infty(0,1)} <C\}.
$$
Since $\alpha(x) < \beta(x) $ for all $x\in [0,1]$, $\Omega$ is non-empty.
Let $\mc{T} : [0,\infty) \times  \mc{C}^1[0,1] \to \mc{C}^1[0,1]$ denote the operator which sends any $(\e, v) \in  [0,\infty) \times  \mc{C}^1[0,1] $
to  the unique solution $w\in W^{2,\infty}(0,1)$ of the linear problem
\begin{equation*}
\left\{ \begin{array}{ll}
-w'' + w = a(x) \, |v|^p \, \sgn(v) \, g(\e v') \, h(\e v) + v,
\qquad  0<x<1,
\\[1mm]
w'(0)=w'(1)=0.
\end{array}\right.
\end{equation*}
It is plain that $\mc{T}$ is completely continuous and its fixed points are precisely the solutions of the problem \eqref{iv.3}. As $\a$ and $\b$ are, respectively, a strict subsolution   and a strict supersolution of  \eqref{iv.4}, by our choice of the constant $C$, it follows that $T(0, \cdot)$
has no fixed points on $\partial \Omega$.
A standard argument (see \cite[Ch. III]{DCH})  also shows  that
$$
{\rm deg}_{LS} (\mc{I-T}(0, \cdot),\Omega,0 ) =1.
$$

\smallskip
\noindent
{\it Existence of  continua.}
The boundedness of  $\partial \Omega$  and the complete continuity of the operator $\mc{T}$ guarantee the existence of some $\e^*>0$
such that  $\mc{T}(\e, \cdot)$ has no fixed point on $\partial \Omega$ for all $\e\in [0,\e^*]$.
Consequently, the Leray-Schauder continuation theorem \cite[p. 63]{LS}
 yields the existence of a
continuum of solutions $(\e, v)$ of the problem   \eqref{iv.3}, where $\e\in [0,\e^*]$  and $v \in \Omega$,
and hence of solutions $(\l, u)$ of the problem   \eqref{1.1}, where $\l=\e^{1-p}\in [0,\l^*]$, with
 $\l^* = (\e^*)^{1-p}$, and $u = \l^\frac{1}{1-p}v$.
 \par
Let us verify that, for each $\e\in [0,\e^*]$,    $v$ is positive and, therefore, for every $\l \in (0,\l^*]$, $u$ is positive. Indeed, otherwise, owing to the definition of $\a$, there should exist $x_0 \in (z,1]$ such that  $v(x_0) = \min v <0$ and $v'(x_0)=0$. Then, one would infer  from \eqref{iv.3} the existence of an interval $J\subseteq (z,1]$, with $x_0\in J$,  such that  $v''(x) <0$   for a.e.   $x\in J $. This is clearly  impossible  at a minimum point which also a critical point.
\par
As in \cite{Rabgb, Ra71}, by the Zorn lemma, this continuum  of positive solutions can   be eventually continued    to a component $\mf{C}_{r,0}^+$ of  the  set of   positive regular solutions of \eqref{1.1}.  Finally, by Lemma \ref{leii.2}, \eqref{iv.1}  holds for sufficiently small $\l > 0$.
\end{proof}

\section{Bifurcation from $(\l,u)= (\infty,0)$ when $p>1$}
\label{s5}

\noindent
Throughout this section, we   assume that the functions $a(x)$ and $f(u)$  satisfy $(a_2)$ and $(f_1)$ with $p>1$, respectively.  In this case, by Theorem \ref{thii.1}, the problem \eqref{1.1} cannot have any solution for sufficiently small $\l>0$.
The main goal of this section is establishing the existence of a component of  positive  regular  solutions of \eqref{1.1} bifurcating from $0$ as $\l \to \infty$.
From \cite[Thm. 1.5]{LOR1} and \cite[Thm. 10.1]{LOR2}    the following result  can be deduced.

\begin{theorem}
\label{thv.1}
Assume $(f_1)$, with $p\in (1,\infty)$, and $(a_2)$. Then, the problem \eqref{1.1} has at least two     positive solution $u_\l, v_\l$  for sufficiently large $\l > 0$. Moreover, $u_\l$ is regular and can be chosen so that $\lim_{\l\to\infty}\|u_\l\|_{C^1[0,1]}=0$.
\end{theorem}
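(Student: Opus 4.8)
The quickest route is simply to combine two facts already available in the literature: an existence result valid for all large $\l$ (either \cite[Thm. 1.4, Rem. 1.9]{LOR1} or the multiplicity statement \cite[Thm. 1.5]{LOR1}), and the construction of a small positive regular solution with vanishing $\mc{C}^1$-norm (\cite[Thm. 10.1]{LOR2}). What I would do is reprove this second ingredient constructively, so as to keep explicit control of $\|u_\l\|_{\mc{C}^1[0,1]}$, and then check that the two solutions it produces are genuinely distinct. The plan splits accordingly into the construction of the small solution $u_\l$, the selection of a second solution $v_\l$, and a separation argument.

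For the small solution, I would reuse the rescaling of Section~\ref{s4}: set $u=\e v$ with $\e=\l^{-1/(p-1)}$, which for $p>1$ satisfies $\e\downarrow0$ as $\l\uparrow\infty$, so that \eqref{1.1} (equivalently \eqref{1.3}) becomes \eqref{iv.3}, a regular perturbation of the \emph{superlinear} indefinite semilinear problem \eqref{iv.4}, since $g(\e v')\to1$ and $h(\e v)\to1$ uniformly on bounded subsets of $\mc{C}^1[0,1]$ as $\e\to0$. The limiting problem \eqref{iv.4} has a positive solution $v_0$, and, crucially, positive solutions of \eqref{iv.3} with $\e$ in a bounded interval are \emph{a priori} bounded in $\mc{C}^1[0,1]$; both facts belong to the theory of superlinear indefinite problems (cf. \cite{AL98}), adapted to the one-dimensional Neumann setting exactly as in the proof of Theorem~\ref{thv.2}. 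With such a priori bounds one fixes a bounded open set $\Omega\subset\mc{C}^1[0,1]$ of positive functions, free of fixed points of the solution operator of \eqref{iv.3} on $\partial\Omega$ for $\e$ small, and with nonzero Leray--Schauder degree at $\e=0$; the Leray--Schauder continuation theorem (used verbatim as in the proof of Theorem~\ref{thiv.2}) then yields, for each small $\e>0$, a positive $v_\e\in\Omega$ solving \eqref{iv.3}. Since $\Omega$ is bounded, $\sup_\e\|v_\e\|_{\mc{C}^1[0,1]}<\infty$, and undoing the change of variable produces a positive solution $u_\l=\e v_\e$ of \eqref{1.1} with $\|u_\l\|_{\mc{C}^1[0,1]}=\e\,\|v_\e\|_{\mc{C}^1[0,1]}=O(\l^{-1/(p-1)})\to0$. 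Regularity of $u_\l$ is then immediate: $v_\e\in\mc{C}^1[0,1]$ solves \eqref{iv.3}, whose right-hand side lies in $L^\infty(0,1)$ under $(a_2)$, so $v_\e\in W^{2,\infty}(0,1)$ and hence $u_\l\in W^{2,\infty}(0,1)\subset W^{2,1}(0,1)$, i.e., $u_\l$ is a regular solution.

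For the second solution I would take $v_\l$ from \cite[Thm. 1.4, Rem. 1.9]{LOR1} (valid for large $\l$ with no upper restriction on $p$). To get that $v_\l\neq u_\l$ for all sufficiently large $\l$, it suffices that $v_\l$ does \emph{not} collapse to $0$ in $\mc{C}^1[0,1]$ as $\l\to\infty$ — which is clear from the variational/min--max origin of $v_\l$ in \cite{LOR1}, where it arises at a positive critical level and in fact its norm does not stay small — whereas $\|u_\l\|_{\mc{C}^1[0,1]}\to0$. Hence \eqref{1.1} has at least two positive solutions for $\l$ large, one of them the advertised small regular $u_\l$.

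The main obstacle is entirely concentrated in the a priori bound/degree step of the second paragraph: for the superlinear indefinite problems \eqref{iv.3}--\eqref{iv.4} one must produce a $\mc{C}^1$-bound on positive solutions that is uniform for $\e$ near $0$ (otherwise the degree on $\partial\Omega$ is not even well defined), and one must compute, or bypass, the index near the trivial solution $v\equiv0$ of \eqref{iv.4} — which is degenerate in the Neumann setting because the constant function lies in the kernel of the linearization. This is precisely where the machinery underlying \cite{AL98} (and its Neumann adaptation used for Theorem~\ref{thv.2}) is indispensable. Everything else — the rescaling bookkeeping, the passage $\e\to0$, and the separation of $u_\l$ from $v_\l$ — is routine.
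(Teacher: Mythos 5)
Your proposal is correct, and at its core it coincides with the paper's: the paper offers no proof of Theorem \ref{thv.1} beyond the sentence introducing it, which deduces the statement from \cite[Thm. 1.5]{LOR1} and \cite[Thm. 10.1]{LOR2} --- precisely the two citations you lead with. Everything you add on top is sound but is not used by the paper for Theorem \ref{thv.1} itself: the rescaling $u=\e v$ with $\e=\l^{-1/(p-1)}$, the passage to the superlinear indefinite limit problem \eqref{iv.4}, and the degree/continuation argument constitute, almost verbatim, the paper's proof of the \emph{next} result, Theorem \ref{thv.2}, which upgrades the small solution to a whole component $\mf{C}_{r,\infty}^+$ with $(\l^*,\infty)\subseteq \mc{P}_\l(\mf{C}_{r,\infty}^+)$; what your constructive route buys over the bare citation is self-containment and the explicit rate $\|u_\l\|_{\mc{C}^1[0,1]}=O(\l^{-1/(p-1)})$. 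Two points of precision on that supplement, both located exactly where you flag the difficulty. First, no $\mc{C}^1$ a priori bound for \eqref{iv.3} uniform in small $\e>0$ is needed or invoked: the paper bounds only the positive solutions of the limit family \eqref{v.3} on compact $\mu$-intervals (via \cite{AL98}) and then excludes solutions of \eqref{iv.3} on $\p\O$ for small $\e$ by a compactness/contradiction argument, so you should not rely on the stronger uniform bound you assert. Second, the degenerate index at $v=0$ is not computed directly; the paper embeds \eqref{iv.4} into the $\mu$-family \eqref{v.3}, shows the total index over a large set vanishes for $\mu\in[0,\mu_*]$, subtracts the index $+1$ carried by the minimal stable solution $v_\mu$, and lets $\mu\to0$ to obtain index $-1$ on $\O=B\setminus\overline{B}_\eta(0)$ --- your ``compute, or bypass'' hides real work there. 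Finally, for the second solution the relevant citation for $p>1$ is \cite[Thm. 1.5]{LOR1} (the paper reserves \cite[Thm. 1.4, Rem. 1.9]{LOR1} for $p=1$), which already yields two solutions, so no separation argument is strictly required; your fallback --- $\|u_\l\|_{\mc{C}^1[0,1]}\to0$ while $v_\l$ stays away from zero --- is nonetheless a legitimate way to guarantee distinctness and can be made rigorous with Lemma \ref{le6.2}.
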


The next  result  complements Theorem \ref{thv.1} by establishing  the existence of a component  of the set $\mc{S}_r^+$    containing  small   solutions for sufficiently large $\lambda >0$.
\begin{theorem}
\label{thv.2}
Assume $(f_1)$, with $p\in (1,\infty)$, and $(a_2)$.
Then,  there is a component $\mf{C}_{r,\infty}^+$  of $\mc{S}_r^+$  such that $(\l^*,\infty) \subseteq \mc{P}_\l(\mf{C}_{r,\infty}^+)$, for some $\l^*>0$,
and
\begin{equation}
\label{v.1}
 \lim_{\l\to \infty} \min\{\|u_\l\|_{\mc{C}^1[0,1]} : (\l, u_\l) \in \mf{C}_{r,\infty}^+\}=0.
\end{equation}
\end{theorem}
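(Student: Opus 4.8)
The plan is to mimic the structure of the proof of Theorem \ref{thiv.2}: rescale the problem so that it becomes a perturbation of a semilinear problem, build strict sub- and supersolutions for the limiting problem, compute a Leray--Schauder degree on the associated order interval, and then run the Leray--Schauder continuation argument to produce an unbounded continuum. The difference is that now $p>1$, so the natural change of variable $u=\e v$ with $\e=\l^{-1/(p-1)}$ sends $\l\to\infty$ to $\e\to0^+$, and the limiting problem as $\e\to 0^+$ is again the superlinear indefinite problem
\begin{equation*}
\left\{ \begin{array}{ll} -v'' = a(x)\,|v|^{p}\,\sgn(v), & 0<x<1,\\[1ex] v'(0)=v'(1)=0, \end{array}\right.
\end{equation*}
whose solution structure is governed by the theory of \cite{AL98}. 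Under $(a_2)$ this problem has a positive solution $v_0$ whose existence and a priori bounds come from that theory; the point is to isolate it by strict sub/supersolutions so that the degree in a suitable open set of $\mc{C}^1[0,1]$ is nonzero, and then perturb.

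First I would set up the rescaling exactly as in \eqref{iv.2}--\eqref{h}, writing the equivalent problem
\begin{equation*}
\left\{ \begin{array}{ll} -v'' = a(x)\,|v|^{p}\,\sgn(v)\,g(\e v')\,h(\e v), & 0<x<1,\\[1ex] v'(0)=v'(1)=0, \end{array}\right.
\end{equation*}
with $g$ as in \eqref{1.4} and $h$ as in \eqref{h}, noting $h(0)=1$ and $h(\e v)\to1$, $g(\e v')\to1$ uniformly on bounded sets as $\e\to0^+$. Next I would construct a strict subsolution $\alpha$ and a strict supersolution $\beta$ of the limiting ($\e=0$) problem with $\alpha<\beta$ on $[0,1]$, using the same ideas as in Theorem \ref{thiv.2}: a small positive multiple of the principal eigenfunction of $-\v''=\mu a\v$ on $(0,z)$ glued to a linear tail on $(z,1)$ for the subsolution, and $\beta=z_k+k$ built from the solution of \eqref{iv.9} for the supersolution; since $p>1$ rather than $p<1$ one must be a little careful with the power-function estimates (e.g. $[c\v_1]^{1-p}$ is now large for small $c$, so the eigenvalue normalisation goes the other way, and in the supersolution estimate one uses $1-(1+z_k/k)^p$ small together with $\int_0^1 a<0$), but these are routine sign bookkeeping. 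One also checks the strictness: any solution $v$ with $v\ge\alpha$ actually has $v>\alpha$, and similarly for $\beta$, by the concavity/minimum-point argument used there. I would then form the open bounded set
\begin{equation*}
\Omega=\{v\in\mc{C}^1[0,1]:\alpha(x)<v(x)<\beta(x)\ \forall x,\ \|v'\|_{L^\infty(0,1)}<C\}
\end{equation*}
with $C$ larger than the a priori $C^1$-bound on solutions trapped between $\alpha$ and $\beta$, define the fixed-point operator $\mc{T}(\e,\cdot)$ exactly as in Theorem \ref{thiv.2}, and invoke the standard sub/supersolution degree computation \cite[Ch. III]{DCH} to get ${\rm deg}_{LS}(\mc{I}-\mc{T}(0,\cdot),\Omega,0)=1$. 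Complete continuity of $\mc{T}$ and boundedness of $\partial\Omega$ give $\e^*>0$ with no fixed points on $\partial\Omega$ for $\e\in[0,\e^*]$, and the Leray--Schauder continuation theorem \cite[p.~63]{LS} yields a continuum of solutions $(\e,v)$ with $\e\in[0,\e^*]$; undoing the rescaling, $\l=\e^{1-p}\in[\l^*,\infty)$ with $\l^*=(\e^*)^{1-p}$, and $u=\l^{-1/(p-1)}v$, and positivity of $v$ (hence of $u$) follows from the sign of $\alpha$ and the no-interior-negative-minimum argument, as in Theorem \ref{thiv.2}. A Zorn-lemma argument as in \cite{Rabgb,Ra71} continues this continuum to a component $\mf{C}_{r,\infty}^+$ of $\mc{S}_r^+$ with $(\l^*,\infty)\subseteq\mc{P}_\l(\mf{C}_{r,\infty}^+)$. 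Finally, \eqref{v.1} follows because on this branch $\|v\|_{\mc{C}^1[0,1]}$ stays bounded (it lives in $\Omega$) while $u=\l^{-1/(p-1)}v$, so $\|u_\l\|_{\mc{C}^1[0,1]}\le \l^{-1/(p-1)}\,(\sup_\Omega\|v\|_{\mc{C}^1[0,1]})\to0$ as $\l\to\infty$; taking the minimum over the solutions on the component at each $\l$ gives the stated limit.

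The main obstacle I anticipate is not the continuation machinery, which transcribes almost verbatim from Theorem \ref{thiv.2}, but the construction of the strict supersolution $\beta$ and the a priori $C^1$ bound in the superlinear regime $p>1$: unlike the sublinear case, the term $a(x)|v|^p\sgn(v)$ does not have the sign-definite sublinear coercivity used in \eqref{iv.13}, so the estimate showing $-\beta''>a\beta^p$ for large $k$ must lean on the negativity of $\int_0^1 a$ and the fact that $1-(1+z_k/k)^p=O(1/k)$ uniformly — which does work because $\|z_k\|_\infty=O(k^p)$ and $p>1$ forces $z_k/k=O(k^{p-1})$, \emph{wait}: here one must instead normalise differently, taking $\beta=z_k+k$ only after checking $\|z_k\|_\infty/k\to0$, which requires the rescaled source in \eqref{iv.9} to be $k^p$ and hence $\|z_k\|_\infty\le 2\|a\|_{L^1}k^p$, giving $z_k/k\sim k^{p-1}\to\infty$ — so in fact the supersolution must be built from a source scaled like $k^{p}$ but compared against $k$ only when $p<1$. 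For $p>1$ one should instead scale $\beta=K\cdot(\text{bounded profile})$ and use that $-\beta''=O(K)$ while $a\beta^p=O(K^p)$ has the favourable sign where $a<0$ and is dominated where $a>0$ by choosing the profile's minimum large relative to its oscillation; equivalently, one invokes directly the a priori bounds and degree-jump results of \cite{AL98} for the limiting superlinear indefinite problem rather than reconstructing barriers by hand. I would therefore, at this step, either cite \cite{AL98} for the nonzero degree of the limiting problem in a suitable isolating neighbourhood and perturb, or carry out the barrier construction with the corrected scaling; the rest of the argument is then routine.
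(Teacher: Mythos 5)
Your rescaling and the overall continuation strategy match the paper's, but your primary route --- transplanting the sub/supersolution and order-interval degree computation from Theorem \ref{thiv.2} --- breaks down for $p>1$ in a way that goes beyond the supersolution scaling issue you caught. The subsolution fails too: the normalisation \eqref{iv.5} requires $\mu_1\,[c\varphi_1(x)]^{1-p}\le 1$ on $[0,z]$, i.e.\ $[c\varphi_1(x)]^{p-1}\ge\mu_1$, and since $\varphi_1(0)=\varphi_1(z)=0$ the left-hand side vanishes at the endpoints for every $c>0$ when $p>1$. This is not a bookkeeping accident: in the superlinear regime the term $a(x)v^p$ is of higher order than $-v''$ near $v=0$, so no small positive barrier can be slid under a nontrivial solution, and the nontrivial positive solutions of the limiting problem \eqref{iv.4} cannot be isolated by an order interval whose lower edge touches zero. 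Your fallback (``cite \cite{AL98} for the nonzero degree in a suitable isolating neighbourhood and perturb'') is indeed the right idea and is what the paper does, but it is left entirely unexecuted, and the execution is the substance of the proof.

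Concretely, the paper embeds \eqref{iv.4} into the one-parameter family \eqref{v.3}, obtains a component $\mf{C}_{\mu,0}^+$ bifurcating supercritically from $(\mu,v)=(0,0)$ (via the limit \eqref{v.5}), proves non-existence of positive solutions for $\mu\ge\mu_*=(\pi/z)^2$ and uniform a priori bounds from \cite{AL98}, and then computes fixed point indices in the cone: the total index $i_P(\mc{K}(\mu,\cdot),B)$ equals $0$ for $\mu\in[0,\mu_*]$ (by homotopy to $\mu_*$ and the spectral radius $\r=\mu_*+1>1$ of the linearization at $0$), while the minimal positive solution $v_\mu$ --- whose uniqueness as the stable solution rests on the Picone-identity adaptation of \cite{GL00,GL01} via \cite{FRLG} --- has index $1$; excision then yields index $-1$ on $B\setminus\overline{B}_\eta(v_\mu)$, and letting $\mu\to0$ gives $i_P(\mc{K}(0,\cdot),B\setminus\overline{B}_\eta(0))=-1$. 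It is this nonzero index on a set that \emph{excludes} a neighbourhood of the trivial solution that survives the perturbation $\e>0$ and feeds the Leray--Schauder continuation. None of these steps (supercritical bifurcation direction, non-existence threshold, a priori bounds, stability of the minimal solution, total index zero, excision) appears in your proposal, so as written it does not constitute a proof; it identifies the correct destination but stops at the point where the real work begins.
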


Theorems \ref{thv.1} and \ref{thv.2}  confirm that the global bifurcation diagram of \eqref{1.1}  looks like   show  the right (red) plots of Figures \ref{Fig02} and \ref{Fig03}, according to the regularity properties of the function $a(x)$ at $z$.   Our proof of Theorem \ref{thv.2} here is based on some elementary topological techniques based on the theory of superlinear indefinite problems of \cite{AL98}.

\begin{proof}
Like in the proof of Theorem \ref{thiv.2} we   suppose  that $f\in\mc{C}(\R)$ is an odd function  and we make the change of variable \eqref{iv.2}.   Then, the problem \eqref{1.1}, or \eqref{1.3}, can be equivalently written as \eqref{iv.3}, where $g$ and $h$ are defined by  \eqref{1.4} and \eqref{h}, respectively. By \eqref{1.2}, this problem perturbs, as $\e\to 0$, from the semilinear   boundary value problem \eqref{iv.4}, which can be obtained from
\begin{equation}
\label{v.3}
\left\{ \begin{array}{ll} -v'' =\mu v+a(x) |v|^p \sgn(v),\qquad  0<x<1,
\\[1ex] v'(0)=v'(1)=0. \end{array}\right.
\end{equation}
by freezing  the value of the parameter $\mu$ at $\mu=0$; \eqref{v.3} is a simple one-dimensional  prototype of the multidimensional model of \cite{AL98}.
\par
Since $\mu=0$ is a simple algebraic eigenvalue of $-D^2$ under Neumann boundary conditions with
associated eigenfunction $1$, the local index of zero changes as  $\mu$ crosses zero (see, e.g.,
\cite[Thm. 5.6.2]{LG01}). Thus, thanks to \cite[Thm. 7.1.3]{LG01}, there is a component $\mf{C}_{\mu,0}^+$ of the set of positive solutions of \eqref{v.1} in $\R\times \mc{C}^1[0,1]$   such that $(\mu,v)=(0,0)\in \mf{\overline C}_{\mu,0}^+$. Since $p$ might vary in the interval $(1,2)$, we do not have the required regularity to apply the local bifurcation theorem in \cite{CR}. Let $(\mu_n,v_n)$, $n\geq 1$, be a sequence of solutions of $\mf{C}_{\mu,0}^+$, with $v_n\neq 0$, such that
\begin{equation}
\label{v.4}
  \lim_{n\to\infty}(\mu_n,v_n)=(0,0)\quad \hbox{in}\;\; \R \times \mc{C}^1[0,1].
\end{equation}
Then, as it will become apparent below, we have that
\begin{equation}
\label{v.5}
  \lim_{n\to\infty}\frac{\mu_n}{\|v_n\|_{\infty}^{p-1}}=-\int_0^1a(x)\,dx >0
\end{equation}
and hence  $\mu_n>0$ for sufficiently large $n$. In particular, $\mf{C}_{\mu,0}^+$ bifurcates  {supercritically} from $(\mu,u)=(0,0)$. To prove \eqref{v.5} one can argue as follows. Since
\begin{equation}
\label{v.6}
  -v_n''=\mu_n v_n +a(x)v_n^p,\qquad n\geq 1,
\end{equation}
we have that
\[
  \frac{v_n}{\|v_n\|_{\infty}} = (-D^2+1)^{-1}\left[ \frac{v_n}{\|v_n\|_{\infty}}+
  \mu_n \frac{v_n}{\|v_n\|_{\infty}}+a(x) \frac{v_n}{\|v_n\|_{\infty}}v_n^{p-1}\right],
\]
where $(-D^2+1)^{-1}$ stands for the resolvent operator of $-D^2+1$  under homogeneous Neumann boundary conditions. As $(-D^2+1)^{-1}$ is compact,  there exists a subsequence of $\v_n := \frac{v_n}{\|v_n\|_{\infty}}$, $n\geq 1$, relabeled by $n$, such that $\lim_{n\to \infty}\v_n=\v \in \mc{C}^2[0,1]$ in $\mc{C}^1[0,1]$. By \eqref{v.4}, letting $n\to \infty$ it is easily seen that necessarily $\v=1$ and, since this argument can be repeated along any subsequence,
it becomes apparent that
\begin{equation}
\label{v.7}
  \lim_{n\to \infty}\v_n = 1 \quad \hbox{in}\;\; \mc{C}^1[0,1].
\end{equation}
On the other hand, integrating \eqref{v.6} in $[0,1]$ and dividing by $\|v_n\|_{\infty}^p$ yields
\[
  \frac{\mu_n}{\|v_n\|_{\infty}^{p-1}}\int_0^1 \frac{v_n(x)}{\|v_n\|_{\infty}} \,dx =-\int_0^1a(x)   \left( \frac{v_n(x)}{\|v_n\|_{\infty}} \right)^p \,dx.
\]
Consequently, letting $n\to \infty$ in this identity, \eqref{v.5} follows readily from \eqref{v.7}.
This shows that $\mf{C}_{\mu,0}^+$ bifurcates towards the right at $\mu=0$.
In other words, there is neighborhood $U$ of $(0,0)$ such that $\mu>0$ if $(\mu,v)\in \mf{C}_{\mu,0}^+\cap U$.
\par
Suppose that \eqref{iv.4} admits a regular positive solution, $(\mu,v)$. Then, since $p>1$, $v$ is strictly positive and hence $(-D^2-\mu)v= a(x)v^p(x)>0$ for all $x\in [0,z]$.
Moreover, $v(0)>0$ and $v(z)>0$.
Thus, $v$ provides us with a positive   strict supersolution of $-D^2-\mu$ in $(0,z)$ under homogeneous Dirichlet boundary conditions and, due to \cite[Thm. 7.10]{LG13},
$$
  \s[-D^2-\mu;\mc{D},(0,z)]=\left(\frac{\pi}{z}\right)^2-\mu >0.
$$
So, $\mu < \mu_*:=\left(\frac{\pi}{z}\right)^2$. Therefore, \eqref{v.1} cannot admit any regular solution if $\mu \geq \mu_*$. In particular, $\mc{P}_\mu(\mf{C}_{\mu,0}^+)\subset (-\infty,\mu_*)$.
Moreover, by the generalized a priori bounds of \cite[Sect. 4]{AL98}, as we are working with a one-dimensional problem, for every compact interval $K\subset \R$, there is a constant $C=C(K)>0$ such that
$\|v\|_{\mc{C}^1[0,1]}\leq C$ for any positive solution $(\mu,v)$ of \eqref{v.1}  with $\mu\in K$. Therefore, setting $\mu_c := \max_{(\mu,v)\in\mf{C}_{\mu,0}^+}\mu$, we have that $\mu_c\in (0,\mu_*]$ and that $ \mc{P}_\mu(\mf{C}_{\mu,0}^+)=(-\infty,\mu_c]$, as illustrated in Figure \ref{Fig04}, where  we are plotting $\mu$, in abscisas, versus $\|v\|_{\mc{C}^1[0,1]}$  in ordinates. Thus,   each  solution $(\mu,v)$ of \eqref{v.1} is represented by a single point on some of the components plotted in the figure.   Naturally, \eqref{v.1} might have other components of positive solutions, like  $\mf{D}^+$.
\par
\begin{figure}[!ht]
\centering{
  \includegraphics[scale=0.6]{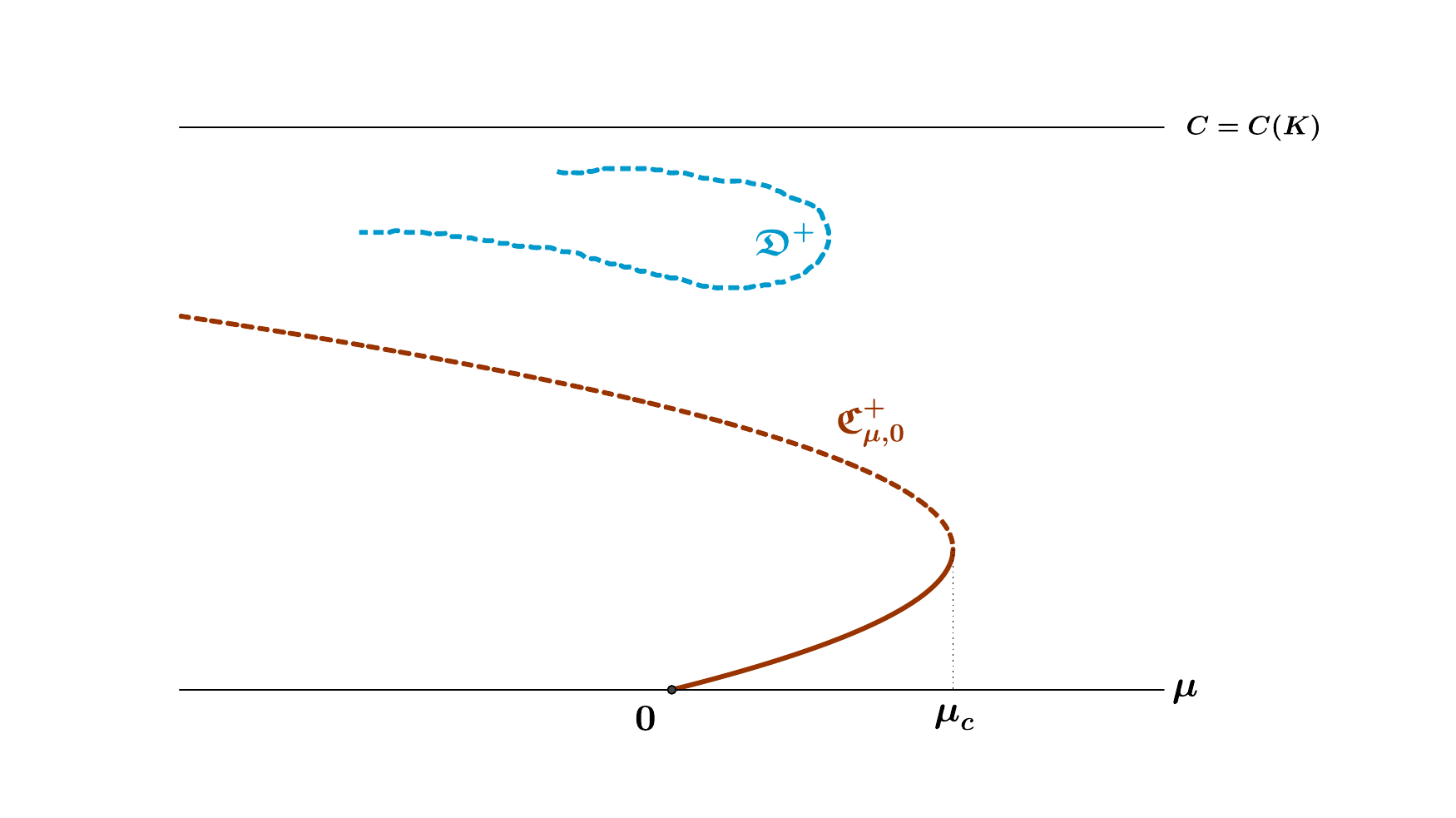}  }
{\caption{The components $\mf{C}_{\mu,0}^+$ and $\mf{D}^+$ of
$\mc{S}_r^+$; $\mf{C}_{\mu,0}^+$ bifurcates supercritically from $(\mu,0)$ at $\mu=0$ and goes backwards at some $\mu_c>0$.}
\label{Fig04}}
\end{figure}

\par
 As the main technical device to get the uniqueness of the stable solution in \cite{GL00, GL01} is the Picone identity  \cite{Picone}, which due to \cite[Lem. 9.3]{FRLG} remains true for Neumann boundary conditions, the theory of \cite{GL00, GL01} can be adapted \emph{mutatis mutandis} to our present setting to show that the unique stable positive solutions of \eqref{v.1} are the minimal solutions of \eqref{v.1} for $\mu>0$ (see \cite{FRLGB}),  i.e., those on the piece of $\mf{C}_{\mu,0}^+$ plotted with a continuous line in Figure \ref{Fig04}. The remaining solutions,   plotted with a dashed line, are linearly unstable.
 \par
Subsequently,  the positive regular solutions of \eqref{v.3} as regarded as positive fixed points of the compact operator
$\mc{K}:\R\times \mc{C}[0,1]\to \mc{C}[0,1]$ defined by
\begin{equation}
\label{v.8}
\mc{K}(\mu,v)=(-D^2+1)^{-1}[(\mu+1)v +a(x)|v|^p \sgn(v)].
\end{equation}
Let $B$ denote any bounded open subset of $\mc{C}^1[0,1]$ containing all non-negative fixed points $(\mu,v)$ of \eqref{v.8},  with $\mu \in [-1,\mu_*]$. It exists by the uniform a priori bounds on compact subintervals of $\mu$ and the non-existence  for $\mu \geq  \mu_*$. Since $B$ contains all non-negative fixed points of
$\mc{K}(\mu,\cdot)$ for all $\mu \in [-1,\mu_*]$, the fixed point index of $\mc{K}(\mu,\cdot)$ on $B$
with respect to the cone $P$ of nonnegative functions in $\mc{C}^1[0,1]$   is well defined.  Moreover,
\begin{equation}
\label{v.9}
  i_P(\mc{K}(\mu,\cdot), B)=0 \qquad \hbox{for all}\;\; \mu \in [0,\mu_*].
\end{equation}
Indeed, by the invariance by homotopy of the index, for every $\mu \in [0,\mu_*]$, we have that
\[
   i_P(\mc{K}(\mu,\cdot), B)= i_P(\mc{K}(\mu_*,\cdot), B)=
   i_P(\mc{K}(\mu_*,\cdot),0),
\]
because $0$ is the unique fixed point of $\mc{K}(\mu_*,\cdot)$ in $P$. For ascertaining the spectral radius of the linearized operator $D\mc{K}(\mu_*,0)$, suppose $\r\in\R$ is an eigenvalue of $D\mc{K}(\mu_*,0)$ associated with a positive eigenfunction $\v$. Then,
\[
  (-D^2+1)^{-1}[(\mu_*+1)\v]=\r \v,
\]
which can be equivalently expressed as $-\v''=( \tfrac{\mu_*+1}{\r}-1)\v$.
Thus, since $\v'(0)=\v'(1)=0$, integrating in $[0,1]$ yields $0 = ( \tfrac{\mu_*+1}{\r}-1)\int_0^1 \v$ and hence, $\r = \mu_*+1>1$, because $\v(x)>0$ in $(0,1)$. Consequently, by  \cite[Lem. 13.1]{Amann}, $i_P(\mc{K}(\mu_*,\cdot),0)=0$ and therefore, \eqref{v.9} holds.
\par
Now, let denote by $v_\mu$ the minimal positive solution of \eqref{v.3} for $\mu \in (0,\mu_c)$. Since it is linearly asymptotically stable for all $\mu \in (0,\mu_c)$ and neutrally stable for $\mu=\mu_c$, combining the Schauder formula with the analysis of \cite[Sect. 7]{AL98} it is easily seen that
$i_P(\mc{K}(\mu,\cdot),v_\mu)=1$. Therefore, by \eqref{v.9} and the excision property, taking into account that $v_\mu$ is non-degenerate, we    find that there exists $\eta>0$ such that
\[
  i_P(\mc{K}(\mu,\cdot),B\setminus {\overline B}_\eta(v_\mu))=-1\qquad \hbox{for all}\;\; \mu \in [0,\mu_c/2],
\]
where $B_\eta(v_\mu)$ stands for the ball of radius $\eta$ centered at $v_\mu$ in $\mc{C}^1[0,1]$. Consequently, since
$\displaystyle \lim_{\mu \to 0}v_\mu=0$, it is plain that, for sufficiently small $\eta>0$,
\begin{equation}
\label{v.10}
  i_P(\mc{K}(0,\cdot),\O )=-1, \quad \hbox{ where }\O= B\setminus {\overline B}_\eta(0).
\end{equation}
Next, note that the positive solutions of \eqref{iv.3} are the positive fixed points of
the compact operator $\mc{M}:\R\times \mc{C}^1[0,1]\to \mc{C}^1[0,1]$ defined by
\begin{equation*}
\mc{M}(\e,v)=(-D^2+1)^{-1}\left[ a(x)|v|^{p }\, \sgn(v) \, g(\e v' ) \,  h(\e v )
\right].
\end{equation*}
Since $\mc{K}(0,\cdot)=\mc{M}(0,\cdot)$, it follows from  \eqref{v.10} that $i_P(\mc{M}(0,\cdot),\O )=-1$ for sufficiently small $\eta>0$. Moreover, for sufficiently small $\e>0$, \eqref{iv.3} cannot admit a solution on $\p \O$. On the contrary, suppose
that there exists a sequence $\{(\e_n,v_n)\}_{n\geq 1}$  of solutions of \eqref{iv.3} such that $\lim_{n\to\infty}\e_n=0$ and $v_n\in \p\O$ for all $n\geq 1$. Then, $v_n=\mc{M}(\e_n,v_n)$
for all $n\geq 1$ and, by compactness, there exists a subsequence of $v_n$, relabeled by $n$, such that
$\lim_{n\to \infty}v_n=v_0\in\p\O$. Since $(\e,v)=(0,v_0)$ must be a positive solution of \eqref{v.3}, this contradicts the fact that \eqref{v.3} cannot admits positive solutions on $\p\O$. Therefore, by the homotopy invariance of the fixed point index, $i_P(\mc{M}(\e,\cdot),\O )=-1$ for sufficiently small $\e>0$. Finally, the Leray-Schauder continuation theorem  \cite[p. 63]{LS} ends the proof.
\end{proof}

\begin{rem}
In this section we confined ourselves  to considering the case where  the function $a(x) $ satisfies condition $(a_2)$. However,  similar conclusions could be established even when the function $a(x)$ changes sign   finitely many times.
Actually, the existence of multiple continua of solutions could be shown in this case.
Indeed, as observed in \cite{GL00} and then rigorously proven in \cite{FZ}, the problem
\begin{equation}
\left\{ \begin{array}{ll}
-v'' =a(x)v^{p },
\qquad  0<x<1, \\[1ex] v'(0)=v'(1)=0
\end{array}\right.
\end{equation}
 might possess a high number of positive solutions according to the number of changes of sign of the weight function $a(x)$.    The conjecture of \cite{GL00} has been recently proven
 in \cite{FLG} for symmetric weight functions $a(x)$.
\end{rem}

\section{Pointwise behavior of the regular solutions as $\l\to  \infty$}
\label{s6}

\noindent
In this section we ascertain the limiting profile of the regular  positive solutions
of \eqref{1.1} as $\l\to  \infty$, should they exist, when $f(u)$ satisfies $(f_2)$ with $p\in (0,1]$ and $a(x) $ satisfies $(a_2)$. This analysis also provides us with the pointwise behavior of the regular
positive solutions separated away from zero when $p>1$. So,   through this section we suppose that \eqref{1.1} possesses a sequence of regular positive solutions, $\{(\l_n,u_n)\}_{n\ge1}$, such that
\begin{equation}
\label{6.1}
  \lim_{n\to\infty}\l_n= \infty.
\end{equation}
We recall that the assumption $(a_2)$ on the function $a(x)$ entails  that any regular positive solution $u$ of \eqref{1.1}  is decreasing in $[0,1]$, as a result of its concavity in $[0,z)$ and its convexity in $(z,1]$, and,  in particular,
\begin{equation}
\label{6.2}
  \|u\|_{L^\infty(0,1)}=u(0) \quad \hbox{and}\quad \|u'\|_{L^\infty(0,1)} = -u'(z).
\end{equation}
We  stress that $u$ might not be strictly positive if $p\in (0,1)$, as already pointed out
 in Section \ref{s1}.  However, should $u$ vanish, this would  happen  in the interval $(z, 1]$, i.e., necessarily $u(x)>0$ for each $x\in [0, z]$.
\par
The next result characterizes the pointwise limit of $\{u_n(x)\}_{n\ge1}$, as $n\to \infty$, for every $x\in [0,1]$.

\begin{lemma}
\label{levi.1} Assume
$(f_1)$ and $(a_2)$.
Let $\{(\l_n,u_n)\}_{n\ge1}$ be a sequence of regular positive solutions of  \eqref{1.1}  such that \eqref{6.1} holds. Then,   for a.e. $x\in  [0,1]$ there exists
\begin{equation}
\label{6.3}
  \lim_{n\to \infty}u_n(x)\in \{0,\infty\} .
\end{equation}
\end{lemma}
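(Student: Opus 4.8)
The plan is to exploit three structural features: the \emph{angle function} $\psi_n:=-u_n'/\sqrt{1+(u_n')^2}$ satisfies $\|\psi_n\|_{L^\infty(0,1)}\le1$ while $\l_n\to\infty$, so the source $a\,f(u_n)$ must become negligible; each $u_n$ is decreasing on $[0,1]$ (cf. \eqref{6.2}); and, under $(a_2)$, $u_n$ is concave on $[0,z)$ and convex on $(z,1]$, so, integrating the equation, $|u_n'|\le|u_n'(z)|$ throughout $[0,1]$. First I would observe that integrating the equation gives $\psi_n(x)=\l_n\int_0^x a\,f(u_n)\,dt$, whence $\big|\int_0^x a\,f(u_n)\,dt\big|\le\l_n^{-1}$ uniformly in $x\in[0,1]$. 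Evaluating at $x=z$ and $x=1$, and using that $a\,f(u_n)\ge0$ a.e. on $(0,z)$ and $a\,f(u_n)\le0$ a.e. on $(z,1)$, this yields $\|a\,f(u_n)\|_{L^1(0,1)}\to0$; since $a\ne0$ a.e., $f(u_n)\to0$ in measure, and then Fatou's lemma gives $\liminf_n f(u_n(x))=0$ for a.e. $x$. Because $f\in\mc{C}(\R)$, $f>0$ on $(0,\infty)$, and $f(s)\to0$ as $s\to\infty$ by \eqref{1.7}, the continuous extension of $f$ to $[0,\infty]$ vanishes only at $0$ and $\infty$; hence for a.e. $x$ the sequence $u_n(x)$ has at least one subsequential limit in $\{0,\infty\}$, i.e. $\liminf_n u_n(x)=0$ or $\limsup_n u_n(x)=\infty$.

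Next I would use the concavity on $[0,z]$, together with $u_n'(0)=0$ and $u_n(z)\ge0$, to get the chord estimate $(1-x/z)\,u_n(0)\le u_n(x)\le u_n(0)$ on $[0,z]$. If $u_n(0)\to c\in(0,\infty)$ along some subsequence, then on each compact subinterval of $[0,z)$ the values $u_n(x)$ eventually lie in a fixed compact subset of $(0,\infty)$, where $f$ has a positive lower bound, and Fatou's lemma would force $\liminf_n\int_0^z a\,f(u_n)\,dx>0$, contradicting the previous paragraph. Hence $\lim_n u_n(0)\in\{0,\infty\}$. If it is $0$, the chord estimate gives $u_n\to0$ uniformly and \eqref{6.3} follows; so the remaining case is $\lim_n u_n(0)=\infty$, and then $u_n(x)\to\infty$ for every $x\in[0,z)$, again by the chord estimate, and what is left is to handle $(z,1)$.

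On $(z,1)$, $u_n$ is convex and decreasing with $u_n'(1)=0$, and $\psi_n$ is monotone there. Passing to a subsequence with $\psi_n(z)\to P\in[0,1]$: if $P<1$ then $\|u_n'\|_{L^\infty(0,1)}=|u_n'(z)|$ is bounded, and with $u_n(0)\to\infty$ this gives $u_n\to\infty$ uniformly on $[0,1]$. If $P=1$, a further subsequence has $u_n(z)\to Q\in[0,\infty]$; since $u_n\le u_n(z)$ on $(z,1)$, the case $Q=0$ gives $u_n\to0$ uniformly on $[z,1]$, and the case $Q\in(0,\infty)$ makes $u_n$ bounded, hence (being convex) locally equi-Lipschitz, on $(z,1)$, so that a further subsequence converges locally uniformly there to a bounded convex $v$ with $f(v)\equiv0$, forcing $v\equiv0$ and $u_n\to0$ pointwise on $(z,1)$. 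The delicate case is $P=1$, $Q=\infty$, where $u_n$ runs on $(z,1)$ from $+\infty$ down to $u_n(1)$: the hard part will be to rule out that the thin transition layer — of width $O(\l_n^{-1})$ by the $L^1$-estimate above — in which $u_n$ crosses a fixed level slides with $n$, so that $u_n(x)$ would oscillate between $0$ and $\infty$ on a positive-measure subset of $(z,1)$. I would handle this by showing that even here the only possible pointwise subsequential limits of $u_n$ on $(z,1)$ are $+\infty$ and $0$, each on the whole interval: using the convexity on $(z,1)$, the condition $u_n'(1)=0$, and the $L^1$-smallness of $a\,f(u_n)$, if $u_n$ stays above a fixed positive level on a non-degenerate subinterval then $|u_n'|$ must blow up near $z^+$ fast enough to drag $u_n\to\infty$ throughout $(z,1)$, while otherwise $u_n$ is locally bounded and the previous compactness argument gives $u_n\to0$ on $(z,1)$.

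Finally I would assemble the pieces: by the second paragraph the decreasing functions $x\mapsto\liminf_n u_n(x)$ and $x\mapsto\limsup_n u_n(x)$ satisfy, a.e., $\liminf_n u_n(x)=0$ or $\limsup_n u_n(x)=\infty$; combining this with the classification of the subsequential limits — each of the form $+\infty$ on $[0,\tau)$ and $0$ on $(\tau,1]$ with $\tau\in[z,1]$ squeezed between $\sup\{x:\liminf_n u_n(x)=\infty\}$ and $\inf\{x:\limsup_n u_n(x)=0\}$ — forces $\tau$ to be independent of the subsequence, and the subsequence criterion in the compact space $[0,\infty]$ then yields $\lim_n u_n(x)=\infty$ for a.e. $x<\tau$ and $=0$ for a.e. $x>\tau$, which is \eqref{6.3}. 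The genuine obstacle, to be emphasized, is the $P=1$, $Q=\infty$ case of the third paragraph: ruling out an $n$-dependent sliding of the transition layer on $(z,1)$.
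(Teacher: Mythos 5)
Your first paragraph already contains the paper's entire proof, but you stop short of extracting its full strength, and that is what sends you on the long detour of paragraphs two to four. From $\int_0^z a\,f(u_n)\,dx=\frac{1}{\lambda_n}\frac{-u_n'(z)}{\sqrt{1+(u_n'(z))^2}}<\frac{1}{\lambda_n}$ and $\int_0^1 a\,f(u_n)\,dx=0$ you correctly get $\|a\,f(u_n)\|_{L^1(0,1)}\to0$; but Fatou only yields $\liminf_n f(u_n(x))=0$ a.e., i.e.\ \emph{one} subsequential limit of $u_n(x)$ in $\{0,\infty\}$. The right move, and the one the paper makes, is the standard one: $L^1$-convergence to $0$ gives a subsequence along which $a(x)f(u_{n_h}(x))\to0$ a.e., hence $f(u_{n_h}(x))\to0$ a.e.\ because $a\neq0$ a.e.\ under $(a_2)$; running this inside every subsequence upgrades the conclusion to the full sequence, $\lim_{n\to\infty}f(u_n(x))=0$ a.e. Since $f$ is continuous, positive on $(0,\infty)$, $f(0)=0$ and $f(u)\to0$ as $u\to\infty$ by \eqref{1.7}, this pins \emph{every} cluster point of $u_n(x)$ in $[0,\infty]$ to $\{0,\infty\}$ for a.e.\ $x$, which is what \eqref{6.3} asserts. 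No concavity, chord estimate, or analysis of the layer at $z$ is needed.

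The concrete gaps in your route are these. First, the $P=1$, $Q=\infty$ case on $(z,1)$ — which you yourself call "the genuine obstacle" — is only a plan, not a proof; and the heuristic offered there (that staying above a fixed level on a subinterval of $(z,1)$ forces $|u_n'|$ to blow up near $z^+$ and drag $u_n\to\infty$ on all of $(z,1)$) is not an argument: what actually excludes a sliding transition layer is, again, the a.e.\ convergence $f(u_n(x))\to0$ that you failed to extract in paragraph one. Second, in paragraph two, showing that no cluster point of $u_n(0)$ lies in $(0,\infty)$ does not give that $\lim_n u_n(0)$ exists: the sequence could in principle alternate between neighborhoods of $0$ and of $\infty$ (this is exactly why the paper's Lemma \ref{le6.2} is phrased in terms of cluster points and only upgrades to a genuine limit when $p\in(0,1]$), and your final assembly paragraph, which tries to produce a single transition point $\tau$ independent of the subsequence, inherits this unresolved issue. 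Once \eqref{6.3} is read, as the paper's proof effectively reads it, as confinement of the a.e.\ subsequential limits of $u_n(x)$ to $\{0,\infty\}$, the lemma follows from your first paragraph alone.
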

\begin{proof}
Integrating the  equation of \eqref{1.1} on $[0,z]$ yields
$$
  \int_0^z a(x)f(u_n(x))\,dx =\frac{1}{\l_n} \frac{-u_n'(z)}{\sqrt{1+(u'_n(z))^2}}<\frac{1}{\l_n}
  \quad \hbox{for all}\;\; n,
$$
and thus, letting $n\to \infty$, we find that $\lim_{n\to\infty}  \int_0^z a(x)f(u_n(x))\,dx =0$.
The convergence in $L^1(0,z)$ entails that there is  a subsequence $\{u_{n_h}\}_{h\ge1}$ of
$\{u_{n}\}_{n\ge1}$ such that
$$
  \lim_{h\to \infty} a(x) f(u_{n_h}(x))=0 \quad \hbox{a.e. in }\;\; [0,z].
$$
Consequently, as $a(x)>0$ a.e. in  $[0,z]$, we find that $\lim_{h\to \infty}   f(u_{n_h}(x))=0$ a.e. in $[0,z]$. Similarly, as $\int_0^1  a(x)f(u_n(x))\,dx=0$ for all $n$, we also have that
\[
  \lim_{n\to\infty}  \int_z^1 a(x)f(u_{n}(x))\,dx =0\]
and therefore there is  a subsequence $\{u_{n_k}\}_{k\ge1}$ of
$\{u_{n}\}_{n\ge1}$ such that $\lim_{k\to \infty} f(u_{n_k}(x))=0$ a.e. in $[1,z]$.
Since  this argument can be repeated  for any possible subsequence of $\{u_{n}\}_{n\ge1}$, we infer that
$\lim_{k\to \infty} f(u_{n}(x))=0$  a.e. in $[0,1]$. As  $f(u)>0$ for all $u>0$, \eqref{6.3} holds.
\end{proof}

Note that Lemma \ref{levi.1} holds regardless the nature of the growth of $f(u)$ at $u=0$, i.e., without any restriction on the size of $p>0$.

\par

\begin{lemma}
\label{le6.2}
Assume
$(f_1)$ and $(a_2)$.
Let $\{(\l_n,u_n)\}_{n\ge1}$ be a sequence of regular positive solutions of  \eqref{1.1}  such that \eqref{6.1} holds. Then,    the cluster points of the sequence $\{ u_n(0) \}_{n\ge1}$ are either  $0$ or  $\infty$.  Moreover, using \eqref{6.2}, as soon as  $p\in (0,1]$, one has that
\begin{equation}
\label{6.4}
  \lim_{n\to\infty}\|u_n\|_{L^\infty(0,1)}=\lim_{n\to \infty} u_n(0) = \infty.
\end{equation}
\end{lemma}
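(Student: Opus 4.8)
The plan is to split the argument into the two assertions. For the first one --- that every cluster point of $\{u_n(0)\}_{n\ge1}$ lies in $\{0,\infty\}$ --- I would combine Lemma \ref{levi.1} with the concavity of $u_n$ on $[0,z)$ (which, as recalled just before the statement, follows from $(a_2)$) and the fact that $u_n$ is decreasing, so $u_n(0)=\|u_n\|_{L^\infty(0,1)}$. By Lemma \ref{levi.1} there is a full-measure set $G\subset(0,z)$ on which $\lim_n u_n(x)\in\{0,\infty\}$. If $\lim_n u_n(x_1)=\infty$ for some $x_1\in G$, then $u_n(0)\ge u_n(x_1)\to\infty$, so in fact $u_n(0)\to\infty$ and we are done. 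Otherwise $\lim_n u_n=0$ on all of $G$; assuming for contradiction that $c\in(0,\infty)$ is a cluster point of $\{u_n(0)\}$, I would pass to a subsequence with $u_n(0)\to c$, pick $x_1\in G$ and $x_2\in G\cap(0,x_1)$, and use the chord inequality coming from concavity, $u_n(x_2)\ge\frac{x_1-x_2}{x_1}u_n(0)+\frac{x_2}{x_1}u_n(x_1)\ge\frac{x_1-x_2}{x_1}u_n(0)$, to obtain in the limit $0\ge\frac{x_1-x_2}{x_1}c>0$, a contradiction. This settles the first part.

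For the second assertion I would argue by contradiction: by the first part it suffices to rule out $0$ being a cluster point, so suppose a subsequence (not relabelled) has $u_n(0)\to0$; then $0\le u_n\le u_n(0)\to0$ uniformly on $[0,1]$. Writing $h$ for the function in \eqref{h}, which is continuous at $0$ with $h(0)=1$ by $(f_1)$, we get $h(u_n)\to1$ uniformly, hence $f(u_n(x))=h(u_n(x))u_n(x)^p\ge\tfrac{1}{2}u_n(x)^p$ on $[0,z]$ for $n$ large; together with the concavity lower bound $u_n(x)\ge(1-x/z)u_n(0)$ on $[0,z]$ this gives $f(u_n(x))\ge\tfrac{1}{2}(1-x/z)^p u_n(0)^p$ there. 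Setting $\psi_n=-u_n'/\sqrt{1+(u_n')^2}$, integration of the equation yields $\psi_n(x)=\lambda_n\int_0^x a\,f(u_n)$, which lies in $[0,1)$ on $[0,z]$ (it is $\ge0$ there since $a\ge0$ and $f(u_n)\ge0$, and $<1$ since $u_n'$ is bounded for a regular solution), so that $u_n'=-\psi_n/\sqrt{1-\psi_n^2}$. Using $s/\sqrt{1-s^2}\ge s$ and Tonelli's theorem I would then estimate
\[
u_n(0)\ \ge\ u_n(0)-u_n(z)=\int_0^z\frac{\psi_n}{\sqrt{1-\psi_n^2}}\,dx\ \ge\ \int_0^z\psi_n\,dx=\lambda_n\int_0^z a(t)\,f(u_n(t))\,(z-t)\,dt\ \ge\ \tfrac{1}{2}\lambda_n u_n(0)^p K_p ,
\]
where $K_p:=\int_0^z a(t)(1-t/z)^p(z-t)\,dt\in(0,\infty)$ is strictly positive because $a>0$ a.e.\ on $(0,z)$. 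Hence $u_n(0)^{1-p}\ge\tfrac{1}{2}K_p\lambda_n\to\infty$, which is impossible since $1-p\ge0$ while $u_n(0)\to0$ (for $p=1$ the left side equals $1$, for $p<1$ it tends to $0$). This contradiction rules out $0$ as a cluster point, so $u_n(0)\to\infty$, and \eqref{6.4} then follows from \eqref{6.2}.

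I expect the second part to be the delicate one. The inequality $u_n(0)\ge\tfrac{1}{2}\lambda_n u_n(0)^p K_p$ is the engine of the proof, and obtaining it genuinely requires \emph{both} the uniform comparison $f(u_n)\sim u_n^p$ (which needs $u_n\to0$ and thus leans on $(f_1)$ at the origin) and the one-sided concavity estimate $u_n\ge(1-\cdot/z)u_n(0)$ on $[0,z]$, which is what propagates the size of $u_n(0)$ into the weight appearing under the integral; one must also be careful that $\psi_n$ stays strictly below $1$ on $[0,z]$ so that the representation $u_n'=-\psi_n/\sqrt{1-\psi_n^2}$ and the subsequent manipulations are legitimate. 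It is exactly here that the hypothesis $p\in(0,1]$ is used: the final bound reads $u_n(0)^{1-p}\ge\tfrac{1}{2}K_p\lambda_n$, which is contradictory only when $1-p\ge0$; for $p>1$ it is instead perfectly compatible with $u_n(0)\to0$, consistently with the component of small regular solutions bifurcating from $(\l,u)=(\infty,0)$ produced in Theorem \ref{thv.2}.
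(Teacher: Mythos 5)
Your proof is correct, but it follows a genuinely different route from the paper's. For the first assertion the paper does not invoke Lemma \ref{levi.1} at all: it assumes a subsequence with $u_n(0)\le K$, caps $f$ above $K$ by the constant $f(K)$, integrates the equation over $(0,z)$ to deduce $u_n\to 0$ a.e.\ and hence pointwise on $(0,1]$ by monotonicity, and then uses a tangent-line/concavity argument to show that $u_n(0)\to L>0$ would force $u_n$ to vanish inside $(0,z)$. Your version, which extracts the a.e.\ dichotomy directly from Lemma \ref{levi.1} and closes with the chord inequality $u_n(x_2)\ge\frac{x_1-x_2}{x_1}u_n(0)$, is shorter and avoids the auxiliary function $\hat f$, at the mild cost of leaning on a lemma the paper proves independently. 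The divergence is more substantial in the second part: for $p\in(0,1]$ the paper replaces $f$ by a function $\tilde f$ with $\tilde f(u)\ge cu$ on $[0,\infty)$, observes that $u_n$ is then a strictly positive supersolution of the linear Dirichlet problem $-w''=c\l_n a(x)w$ on $(0,z)$, and derives a contradiction with the uniqueness of the principal eigenvalue $\mu_1$ once $c\l_n>\mu_1$ (via a sub-/supersolution sandwich). Your argument instead is a self-contained quantitative one: the double integration giving $u_n(0)\ge\l_n\int_0^z a(t)f(u_n(t))(z-t)\,dt$, combined with the concavity profile $u_n\ge(1-\cdot/z)u_n(0)$ and the local comparison $f(u)\ge\tfrac12 u^p$ near $0$, yields $u_n(0)^{1-p}\ge\tfrac12 K_p\l_n$ and hence the contradiction when $1-p\ge 0$. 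This is more elementary (no spectral theory, no sub-/supersolution machinery), though it rules out only the cluster point $0$ and therefore genuinely needs your first part to conclude $u_n(0)\to\infty$, whereas the paper's eigenvalue comparison excludes every bounded subsequence at once; note also that the paper's argument works under the mere bound $u_n(0)\le K$, while yours needs $u_n(0)\to0$ along the subsequence to activate the asymptotics of $f$ at the origin. All the individual steps you give (the sign and strict bound $\psi_n\in[0,1)$, the Tonelli exchange, the positivity of $K_p$ from $(a_2)$) check out.
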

\begin{proof}
Suppose that there are a constant $K>0$ and  a subsequence, relabeled by $n$, of $\{(\l_n,u_n)\}_{n\ge1}$ such that
\begin{equation}
\label{6.5}
  u_n(0)=\|u_n\|_{L^\infty(0,1)} \leq K  \qquad \hbox{for all}\;\; n.
\end{equation}
We will show that this is impossible if $p\in (0,1]$, while it implies
\begin{equation}
\label{6.6}
   \lim_{n\to \infty} u_n(0) =0
\end{equation}
 if $p>1$.
\par
We first consider the case where $p\in (0,1]$. Let us define the auxiliary function
$$
  \tilde f(u)= \left\{ \begin{array}{ll} f(u) &\quad \hbox{if}\;\; u\le K,\\
  \frac{f(K)}{K}u &\quad \hbox{if}\;\; u >K.
  \end{array}\right.
$$
By construction, $\tilde f \in\mc{C}(\R)$ and, since $f(u)$ satisfies  $(f_1)$ with $p\in(0,1]$,
there exists $c>0$ such that $\tilde f(u) \ge  c \, u$ for all $u\ge 0$. Then, for every $n$, $(\l_n,u_n)$ solves the auxiliary boundary value problem
\begin{equation*}
   \left\{ \begin{array}{ll}
   \displaystyle -\left( \frac{u'}{\sqrt{1+(u')^2}}\right)' =\l a(x) \tilde f(u), & \quad 0<x<1, \\[1ex]    u'(0)=u'(1)=0. & \end{array}\right.
\end{equation*}
Thus, each $u_n$ satisfies
\begin{equation*}
-u_n''(x)  =  \l_n a(x) \tilde f(u_n(x)) \big[1 + (u'_n(x))^2\big]^\frac{3}{2} \ge
 \l_n a(x)  \, c \, u_n(x)  \quad \hbox{a.e. in } (0,z)
\end{equation*}
and hence,  $u_n$ is a strictly positive   supersolution  of the  problem
\begin{equation}
\label{6.7}
   \left\{ \begin{array}{ll} -w''=  c \, \l_n a(x)   \, w, & \quad 0<x<z, \\
  w(0)=w(z)=0. &
  \end{array}\right.
\end{equation}
Let $\mu_1 $ denote the unique  positive eigenvalue, with a  corresponding positive eigenfunction $\varphi_1$, of the weighted eigenvalue problem
\begin{equation*}
   \left\{ \begin{array}{ll} -\v''= \mu \,a(x) \, \v, & \quad 0<x<z, \\
  \v(0)=\v(z)=0.  & \end{array}\right.
\end{equation*}
If we pick a sufficiently large $n$ so that $ c \,  \l_n  > \mu_1$, then a suitable multiple of $\varphi_1$  provides  us with a positive  subsolution of  \eqref{6.7} smaller than $u_n$, thus yielding the existence of a positive solution  of \eqref{6.7}. This solution  would be a principal
eigenfunction associated to $c\l_n>\mu_1$,   contradicting the uniqueness   of $\mu_1$. So, \eqref{6.4} holds in case $p\in (0,1]$.
\par
Now, consider the case where  $p>1$. Then, setting
$$
  \hat f(u)= \left\{ \begin{array}{ll} f(u) &\quad \hbox{if}\;\; u\le K,\\
  f(K) &\quad \hbox{if}\;\; u >K,
  \end{array}\right.
$$
\eqref{6.5} entails  that $f(u_n)=\hat f(u_n)$, for all $n$, and thus $(\l_n,u_n)$ solves
\begin{equation*}
   \left\{ \begin{array}{ll}
   \displaystyle -\left( \frac{u'}{\sqrt{1+(u')^2}}\right)' =\l a(x) \hat f(u), & \quad 0<x<1, \\[1ex]    u'(0)=u'(1)=0. & \end{array}\right.
\end{equation*}
Integrating the equation above in $(0,z)$ yields
$$
 \int_0^z a(x) \hat f(u_n(x))\,dx
 = \frac{1}{ \l_n} \frac{-u'_n(z)}{\sqrt{1+(u'_n(z))^2}}
 < \frac{1}{ \l_n}
$$
and consequently, by \eqref{6.1},
$$
  \lim_{n\to\infty}\int_0^z  a(x) \hat f(u_n(x))\,dx =0.
$$
This implies that,  for a subsequence, still labeled by $n$, $\lim_{n\to\infty} \big( a(x) \hat f(u_n(x))\big) =0$ a.e. in $[0,z]$ and hence $\lim_{n\to\infty}   \hat f(u_n(x))  =0$ a.e. in $[0,z]$.
The definition of  $\hat f(u)$ yields $\lim_{n\to\infty}    u_n(x)   =0$ a.e. in $[0,z]$.
Therefore, since  $u_n$ is decreasing, it becomes apparent that
\begin{equation}
\label{6.8}
  \lim_{n\to\infty}u_n(x)=0 \quad \hbox{for all}\;\; x\in (0,1].
\end{equation}
Suppose, by contradiction, that \eqref{6.6} does not hold.  Then, due to  \eqref{6.5}, there exists a subsequence, again labeled by $n$, such that
\begin{equation}
\label{6.9}
  \lim_{n\to \infty}u_n(0)=L\in (0,K].
\end{equation}
 By the concavity of $u_n$ in $[0,z)$, for any given $y\in (0,z)$, we have that
$$
u'_n(y)\leq \frac{u_n(y)-u_n(0)}{y}.
$$
Hence,  by \eqref{6.8} and \eqref{6.9}, we find that, for a further subsequence, still labeled by $n$,
\begin{equation}
\label{6.10}
   \lim_{n\to\infty} u_n'(y)\leq \lim_{n\to\infty} \frac{u_n(y)-u_n(0)}{y} = -\frac{L}{y}.
\end{equation}
By the concavity, we also have that
\begin{equation}
\label{6.11}
  u_n(x)\leq u_n(y)+u_n'(y)(x-y) \quad \hbox{for all}\;\; x\in (y,z).
\end{equation}
Note that the right hand side of \eqref{6.11} vanishes at $x_n =  y-\frac{u_n(y)}{u_n'(y)}$. Thanks to \eqref{6.8} and \eqref{6.10}, we get $\lim_{n\to \infty} x_n= y < z$ and hence, for sufficiently large $n$, $x_n<z$. This forces $u_n$ to vanish in $(0,z)$. As this  is impossible, we conclude that $L=0$, i.e., \eqref{6.6} holds.
\end{proof}

 Under  assumption $(f_1)$ there exists $M>0$,
not necessarily unique, such that $f(M)= \|f\|_{\infty}$. Throughout the rest of this section, $M$ is chosen so that $f(u)<f(M)$ for all $u>M$. Under assumption $(f_2)$, $M$ is uniquely determined.
\par

\begin{lemma}
\label{levi.3}
Assume
$(f_2)$ and $(a_2)$. Let $\{(\l_n,u_n)\}_{n\ge1}$ be a sequence of regular positive solutions of  \eqref{1.1}  such that \eqref{6.1} holds.
Then,
\begin{equation}
\label{6.12}
  u_n(1) < M  \quad \hbox{for sufficiently large}\;\; n.
\end{equation}
\end{lemma}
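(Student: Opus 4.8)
The plan is to argue by contradiction, using only the monotonicity of $f$ past $M$ granted by $(f_2)$ together with the sign structure of $a$ in $(a_2)$. Suppose \eqref{6.12} fails; then, along a subsequence still labelled by $n$, we have $u_n(1)\ge M$ for every $n$, and since \eqref{6.1} forces $\l_n>0$ for large $n$ we may also assume $\l_n>0$ throughout. Because $(a_2)$ makes every regular positive solution $u_n$ decreasing on $[0,1]$ (concave on $[0,z)$, convex on $(z,1]$, with $u_n'(0)=u_n'(1)=0$), this gives $u_n(x)\ge u_n(1)\ge M$ for all $x\in[0,1]$.

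The key observation is that $f$ is non-increasing on $[M,\infty)$ by $(f_2)$, so, composed with the decreasing function $u_n$, the map $f\circ u_n$ is non-decreasing on $[0,1]$; in particular
\[
 \max_{[0,z]}f(u_n)=f(u_n(z))=\min_{[z,1]}f(u_n).
\]
On the other hand, integrating the equation in \eqref{1.1} over $(0,1)$ and using $u_n'(0)=u_n'(1)=0$ (and $\l_n>0$) yields $\int_0^1 a(x)f(u_n(x))\,dx=0$, i.e. $\int_0^z a\,f(u_n)\,dx=\int_z^1(-a)\,f(u_n)\,dx$. Then I would bound $f(u_n)$ from above by $f(u_n(z))$ on $(0,z)$, where $a>0$ a.e., and from below by $f(u_n(z))$ on $(z,1)$, where $-a>0$ a.e., to obtain
\[
 f(u_n(z))\int_z^1(-a)\,dx\;\le\;\int_z^1(-a)\,f(u_n)\,dx\;=\;\int_0^z a\,f(u_n)\,dx\;\le\;f(u_n(z))\int_0^z a\,dx .
\]
Since $u_n(z)\ge M>0$, $(f_1)$ gives $f(u_n(z))>0$, so dividing through yields $\int_z^1(-a)\,dx\le\int_0^z a\,dx$, i.e. $\int_0^1 a(x)\,dx\ge0$, contradicting $(a_2)$. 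This contradiction proves \eqref{6.12}.

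I do not expect a serious obstacle here: the argument is short and essentially self-contained, needing neither Lemma \ref{levi.1} nor Lemma \ref{le6.2}, and the hypothesis $\l_n\to\infty$ is consumed only through $\l_n>0$ (indeed the same computation shows $u_n(1)<M$ for any regular positive solution with $\l>0$, so ``for sufficiently large $n$'' is merely a safe phrasing). The one point that must be handled with care is to invoke the monotonicity of $f$ precisely on $[M,\infty)$ and the monotonicity of $u_n$ on \emph{all} of $[0,1]$, so that $f\circ u_n$ is genuinely monotone on the whole interval and the two one-sided integral estimates share the common constant $f(u_n(z))$; this is exactly the step where the contradiction hypothesis $u_n(1)\ge M$ is used.
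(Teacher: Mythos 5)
Your proof is correct, and it takes a genuinely different route from the paper's. The paper also starts from the contradiction hypothesis $u_n(1)\ge M$ and the resulting bound $u_n\ge M$ on $[0,1]$, but it then exploits the $\mc{C}^1$ regularity of $f$ on $[M,\infty)$ guaranteed by $(f_2)$: dividing the equation by $f(u_n)$ it writes $\l_n a(x)$ as a total derivative plus the term $\bigl(1/f(u_n)\bigr)'\,u_n'/\sqrt{1+(u_n')^2}$, integrates over $[0,1]$, and obtains
$\l_n\int_0^1 a\,dx=-\int_0^1 \frac{f'(u_n)}{f^2(u_n)}\frac{(u_n')^2}{\sqrt{1+(u_n')^2}}\,dx\ge 0$
because $f'\le 0$ on $[M,\infty)$, contradicting $\int_0^1 a\,dx<0$. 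Your argument reaches the same contradiction by a monotone-comparison device instead: since $f\circ u_n$ is non-decreasing on $[0,1]$, you bound it above by $f(u_n(z))$ on $(0,z)$ where $a>0$ and below by $f(u_n(z))$ on $(z,1)$ where $a<0$, and the integral identity $\int_0^1 a\,f(u_n)\,dx=0$ then forces $\int_0^1 a\,dx\ge 0$. What your version buys is that it needs only the monotonicity of $f$ on $[M,\infty)$, not its differentiability there, so it would survive a weakening of $(f_2)$ in that respect; on the other hand it leans more heavily on the single sign change of $a$ from $(a_2)$ in the comparison step, whereas the paper's computation uses $(a_2)$ only through the monotonicity of $u_n$. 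Your closing remarks are also accurate: the hypothesis $\l_n\to\infty$ enters only through $\l_n>0$, and neither Lemma \ref{levi.1} nor Lemma \ref{le6.2} is needed.
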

\begin{proof}
On the contrary, assume that there is a subsequence,
labeled again by $n$, such that $u_n(1)\geq M$
for all $n $. Then, since each $u_n$ is   decreasing, we have that  $u_n(x)\geq M$ for all $x\in [0,1]$. Accordingly, since $f \in\mc{C}^1[M,\infty)$, the next identity holds for every $x\in [0,1]$
\begin{equation}
\label{6.13}
  \l_n a(x)= \left( \frac{1}{f(u_n(x))} \frac{-u_n'(x)}{\sqrt{1+(u_n'(x))^2}} \right)' +
  \left( \frac{1}{f(u_n(x))}  \right)' \frac{u_n'(x)}{\sqrt{1+(u_n'(x))^2}}.
\end{equation}
Integrating \eqref{6.13} in $[0,1]$ yields
\begin{align*}
\l_n \int_0^1 a(x) \, dx & = \int_0^1 \left( \frac{1}{f(u_n(x))}  \right)' \frac{u_n'(x)}{\sqrt{1+(u_n'(x))^2}} \, dx
\\
& = - \int_0^1 \frac{f'(u_n(x))}{f^2(u_n(x))} \frac{(u_n'(x))^2}{\sqrt{1+(u_n'(x))^2}}\,dx.
\end{align*}
As  $u_n(x)\geq M$ and $f(u)$ is decreasing in $[M,\infty)$, we have that
$f'(u_n(x))\leq 0$ for all $x\in [0,1]$ and $n$. Hence, we obtain a contradiction with $(a_2)$, which  requires $\int_0^1a(x) \, dx <0$. Therefore, \eqref{6.12} holds.
\end{proof}

Throughout the remainder of this section
we will assume
that, in the  case where $p>1$,
the sequence $\{(\l_n,u_n)\}_{n\geq 1}$, in addition to
\eqref{6.1}, satisfies
\begin{equation}
\label{6.14}
  \liminf _{n\to \infty} u_n(0) >0.
\end{equation}
Under this assumption, according to Lemma \ref{le6.2}, the condition \eqref{6.4} must hold, regardless
the size of $p>0$. Naturally, this property fails to be true in case $p>1$ for the small regular positive solutions of \eqref{1.1} whose existence is guaranteed  by Theorem \ref{thv.1}.

\begin{lemma}
\label{le4.4} Assume
$(f_2)$ and  $(a_2)$.
Let $\{(\l_n,u_n)\}_{n\ge1}$ be a sequence of regular positive solutions of  \eqref{1.1}  satisfying \eqref{6.1} and \eqref{6.14}.
Let $x_\o$ be the unique point in the interval $(z,1)$
where $\int_0^{x_\o} a(x) \,dx=0$. Then, for sufficiently large $n$, there exists a unique $x_n\in (0,x_\o)$ such that
\begin{equation}
\label{6.15}
  u_n(x_n)=M.
\end{equation}
\end{lemma}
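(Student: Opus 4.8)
The plan is to exploit the monotonicity of $u_n$ together with an integral identity pinned at the point $x_\o$. First I would fix $n$ large enough that \eqref{6.4} yields $u_n(0)=\|u_n\|_{L^\infty(0,1)}>M$ while Lemma \ref{levi.3} yields $u_n(1)<M$; since $u_n$ is continuous and non-increasing on $[0,1]$, the intermediate value theorem then produces at least one $x_n$ with $u_n(x_n)=M$. For uniqueness I would sharpen the monotonicity on $[0,z]$: because $u_n(x)>0$ for every $x\in[0,z]$, equation \eqref{1.3} together with $g>0$ forces $-u_n''=\l_n a f(u_n)g(u_n')>0$ a.e. in $(0,z)$, so $u_n'$ is strictly decreasing there and $u_n'<0$ on $(0,z]$; hence $u_n$ is strictly decreasing on $[0,z]$ and attains the value $M$ at most once in $[0,z]$. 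On the other hand $u_n$ cannot be identically $M$ on any nondegenerate subinterval of $(z,1)$, since there $u_n\equiv M>0$ would give $-u_n''=\l_n a f(M)g(0)<0$ a.e. (recall $a<0$ a.e. in $(z,1)$), i.e. $u_n''>0$, a contradiction. Since the level set $\{u_n=M\}$ is a (possibly degenerate) interval contained in $(0,1)$, these two facts force it to reduce to a single point $x_n\in(0,1)$, the unique solution of \eqref{6.15}.

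The core step, and the one I expect to be the main obstacle, is to show $x_n<x_\o$. I would argue by contradiction, assuming $x_n\ge x_\o$. Then, $u_n$ being non-increasing, $u_n(x)\ge u_n(x_n)=M$ for all $x\in[0,x_\o]$, and in particular $u_n(z)\ge M$. Integrating \eqref{1.1} over $[0,x_\o]$ and using $u_n'(0)=0$,
\[
\l_n\int_0^{x_\o} a(x)f(u_n(x))\,dx=\frac{-u_n'(x_\o)}{\sqrt{1+(u_n'(x_\o))^2}}>0,
\]
the strict inequality holding because $u_n'(x_\o)<0$: indeed $u_n'(x_\o)=0$ would, by convexity of $u_n$ on $(z,1)$ together with $u_n'(1)=0$, force $u_n\equiv u_n(1)<M$ on $[x_\o,1]$, against $u_n(x_\o)\ge M$. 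On the other hand, by $(f_2)$ the function $f$ is decreasing on $[M,\infty)$, so the monotonicity of $u_n$ gives $f(u_n(x))\le f(u_n(z))$ for $x\in[0,z]$ and $f(u_n(x))\ge f(u_n(z))$ for $x\in[z,x_\o]$; multiplying by $a(x)$, which is positive a.e. in $(0,z)$ and negative a.e. in $(z,x_\o)\subset(z,1)$, and integrating, I would obtain
\[
\int_0^{x_\o} a(x)f(u_n(x))\,dx\le f(u_n(z))\int_0^{x_\o} a(x)\,dx=0
\]
by the defining property of $x_\o$. This contradicts the previous display, so $x_n<x_\o$; and since $u_n(0)>M=u_n(x_n)$, also $x_n>0$, which completes the argument.

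The existence and uniqueness of $x_n$ is routine given the concavity/convexity structure recalled at the beginning of Section \ref{s6}. The genuinely substantive point, and the reason the relevant threshold is $x_\o$ rather than the node $z$, is the sign-reversing comparison $\int_0^{x_\o} af(u_n)\le f(u_n(z))\int_0^{x_\o} a=0$: it uses simultaneously that $u_n$ is decreasing and that $f$ is decreasing beyond $M$, and it is anchored precisely at the point where $\int_0^{x_\o}a$ vanishes. The one delicate ingredient inside it is the strict positivity of $-u_n'(x_\o)/\sqrt{1+(u_n'(x_\o))^2}$, which I would secure through the convexity argument indicated above.
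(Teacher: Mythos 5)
Your argument is correct, and while the existence/uniqueness part coincides with the paper's (the paper derives $u_n(0)>M$ and $u_n(1)<M$ from Lemmas \ref{le6.2} and \ref{levi.3} and invokes monotonicity exactly as you do, though it does not spell out the strict-monotonicity and convexity details you supply), your treatment of the core step $x_n<x_\o$ follows a genuinely different route. The paper divides the equation by $f(u_n)$ to obtain the identity \eqref{6.13}, integrates it over $[0,x_n]$, and reads off directly that $\l_n\int_0^{x_n}a(x)\,dx$ equals the sum of the boundary term $\tfrac{1}{f(M)}\tfrac{-u_n'(x_n)}{\sqrt{1+(u_n'(x_n))^2}}$ and $-\int_0^{x_n}\tfrac{f'(u_n)}{f^2(u_n)}\tfrac{(u_n')^2}{\sqrt{1+(u_n')^2}}\,dx$, both nonnegative and not both zero because $f'<0$ on $(M,\infty)$; hence $\int_0^{x_n}a>0$ and $x_n<x_\o$. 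You instead argue by contradiction anchored at $x_\o$, integrating the original equation over $[0,x_\o]$ and bounding $\int_0^{x_\o}a\,f(u_n)\,dx\le f(u_n(z))\int_0^{x_\o}a\,dx=0$ by matching the monotonicity of $f$ beyond $M$ against the sign of $a$ on either side of $z$. Both proofs hinge on the same two structural facts (monotonicity of $u_n$ and of $f$ past $M$, and the sign change of $a$ at $z$), but your version has the small advantage of never differentiating $1/f(u_n)$, so it uses only that $f$ is decreasing on $[M,\infty)$ and not the $\mc{C}^1[M,\infty)$ regularity built into $(f_2)$; the paper's version is more direct (no contradiction) and yields the slightly more quantitative statement $\int_0^{x_n}a(x)\,dx>0$, which localizes $x_n$ without reference to $x_\o$ a priori. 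Your auxiliary verification that $u_n'(x_\o)<0$ (via convexity on $(z,1)$ and $u_n'(1)=0$) is needed for the strict inequality in your boundary term and is carried out correctly.
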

\begin{proof}
Under the condition \eqref{6.14}, Lemma \ref{le6.2} and Lemma \ref{levi.3} imply that $u_n(0)>M$ and $u_n(1)<M$ for sufficiently large $n$. Thus, since $u_n$ is decreasing in $[0,1]$,
there exists a unique $x_n\in (0,1)$ for which \eqref{6.15} holds. Necessarily, we have $u_n(x)\ge M$ for all $x\in [0,x_n]$. Thus, integrating \eqref{6.13} in $[0,x_n]$ we find that
\[
  \l_n\int_0^{x_n}a(x) \, dx = \frac{1}{f(M)}\frac{-u_n'(x_n)}{\sqrt{1+(u_n'(x_n))^2}}-
  \int_0^{x_n} \frac{f'(u_n(x))}{f^2(u_n(x))} \frac{(u_n'(x))^2}{\sqrt{1+(u_n'(x))^2}}\,dx >0,
\]
because $f'(u)<0$ if $u>M$. Hence, we conclude that $\int_0^{x_n}a(x)\,dx >0$ for sufficiently large $n$,
and therefore $x_n<x_\o$. This ends the proof.
\end{proof}

\begin{lemma}
\label{le6.5}
 Assume  $(f_2)$ and  $(a_2)$. Let $\{(\l_n,u_n)\}_{n\ge1}$ be a sequence of regular positive solutions of  \eqref{1.1}  satisfying \eqref{6.1} and \eqref{6.14}.  Then, for every $\eta \in (0,z)$, there exists $n_0 \in\N$ such that $x_n\in (z-\eta,x_\o)$ for all  $n\geq n_0$.
\end{lemma}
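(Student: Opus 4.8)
The plan is as follows. Fix $\eta\in(0,z)$. By Lemma~\ref{le4.4} (which relies on Lemmas~\ref{le6.2} and~\ref{levi.3}), $x_n$ is well defined and satisfies $x_n<x_\o$ for all large $n$; hence it only remains to prove that $x_n>z-\eta$ for all large $n$. I would argue by contradiction: if not, then, passing to a subsequence (not relabelled), $x_n\le z-\eta$ for all $n$, so $x_n\in(0,z)$ and $z-x_n\ge\eta$. Recall that, under $(a_2)$, each $u_n$ is decreasing on $[0,1]$, concave on $[0,z)$, and strictly positive on $[0,z]$, while by \eqref{6.4} (from Lemma~\ref{le6.2} together with~\eqref{6.14}) $u_n(0)=\|u_n\|_{L^\infty(0,1)}\to\infty$, and $M$ is the fixed constant of $(f_2)$.

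The key device is the flux $\psi_n:=-u_n'/\sqrt{1+(u_n')^2}$. Since $u_n'\le0$ we have $\psi_n\in[0,1)$ with $\psi_n(0)=0$, and the equation in~\eqref{1.1} reads $\psi_n'=\l_n\,a\,f(u_n)$ a.e. As $a>0$ a.e. on $(0,z)$, $f>0$ on $(0,\infty)$ and $u_n>0$ on $[0,z]$, it follows that $\psi_n'>0$ a.e. on $(0,z)$, so $\psi_n$ is increasing on $[0,z]$. Inverting the defining relation yields $-u_n'=\psi_n/\sqrt{1-\psi_n^2}$ on $[0,z]$, where $t\mapsto t/\sqrt{1-t^2}$ is increasing on $[0,1)$.

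The contradiction then follows from two one-line integral estimates. On $[0,x_n]$, since $\psi_n\le\psi_n(x_n)$ there and $x_n<z$,
\[
u_n(0)-M=u_n(0)-u_n(x_n)=\int_0^{x_n}\frac{\psi_n}{\sqrt{1-\psi_n^2}}\,dx\le\frac{z}{\sqrt{1-\psi_n(x_n)^2}},
\]
so $1-\psi_n(x_n)^2\le z^2/(u_n(0)-M)^2\to0$, i.e.\ $\psi_n(x_n)\to1$. On $[x_n,z]$, since now $\psi_n\ge\psi_n(x_n)$ and $u_n(z)\ge0$,
\[
M\ge u_n(x_n)-u_n(z)=\int_{x_n}^{z}\frac{\psi_n}{\sqrt{1-\psi_n^2}}\,dx\ge(z-x_n)\,\frac{\psi_n(x_n)}{\sqrt{1-\psi_n(x_n)^2}}\ge\eta\,\frac{\psi_n(x_n)}{\sqrt{1-\psi_n(x_n)^2}}.
\]
By the first estimate the last expression tends to $+\infty$, contradicting that $M$ is a fixed constant. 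Hence $x_n>z-\eta$ for all large $n$, which, combined with $x_n<x_\o$, gives $x_n\in(z-\eta,x_\o)$ for all large $n$.

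I expect the only non-routine point to be spotting this monotone-flux mechanism: because $\psi_n$ increases on $[0,z)$ and is driven arbitrarily close to $1$ at $x_n$ merely to accommodate the unbounded descent of $u_n$ from $u_n(0)$ down to $M$ on $[0,x_n]$, it must remain arbitrarily close to $1$ throughout $[x_n,z]$, forcing $u_n$ to descend by an unbounded amount over an interval of length at least $\eta$; but there it can drop only from $M$ to a nonnegative value. All the rest is bookkeeping resting on Lemmas~\ref{le6.2},~\ref{levi.3} and~\ref{le4.4}.
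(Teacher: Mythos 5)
Your proof is correct, but it follows a genuinely different route from the paper's. The paper also argues by contradiction assuming $x_n\le z-\eta$ along a subsequence, but its mechanism is: first it combines $u_n(x_n)=M$, the monotonicity of $u_n$ and Lemma \ref{levi.1} to get $u_n\to 0$ uniformly on $[z-\tfrac{\eta}{2},1]$; then the concavity of $u_n$ on $[0,z)$ gives the secant bound $u_n'(z-\tfrac{\eta}{2})\le \frac{u_n(z-\eta/2)-M}{z-\eta/2-x_n}$, which stays below a fixed negative constant, and concavity again forces $u_n$ to vanish inside $(0,z)$ --- contradicting the fact that, under $(a_2)$, positive solutions can only vanish in $(z,1]$. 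You instead exploit the specific structure of the curvature operator through the flux $\psi_n=-u_n'/\sqrt{1+(u_n')^2}$: the constraint $\psi_n<1$ together with $u_n(0)\to\infty$ (which is \eqref{6.4}, valid here thanks to \eqref{6.14} and Lemma \ref{le6.2}) forces $\psi_n(x_n)\to 1$ at the quantitative rate $\sqrt{1-\psi_n(x_n)^2}\le z/(u_n(0)-M)$, and the monotonicity of $\psi_n$ on $[0,z]$ (equivalent to the concavity the paper uses) then makes the drop of $u_n$ over $[x_n,z]$ blow up, against the bound $M$. Both arguments rest on the same structural facts (monotonicity, concavity on $[0,z)$, strict positivity on $[0,z]$, and Lemma \ref{le4.4} for $x_n<x_\omega$); yours is more self-contained in that it bypasses the pointwise dichotomy of Lemma \ref{levi.1} and the uniform-decay step, and it yields an explicit rate for $\psi_n(x_n)\to1$, while the paper's version reuses machinery (Lemma \ref{levi.1} and the secant/concavity device) that recurs elsewhere in Section \ref{s6}. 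All the auxiliary facts you invoke are available at this point in the paper, so the argument stands as written.
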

\begin{proof}
On the contrary, suppose that there exists $\eta\in (0,z)$ such that $[0,z-\eta]$ contains a subsequence of $\{x_n\}_{n\geq 1}$, still labeled by $n$. Then, without loss of generality, we can further assume that $\lim_{n\to\infty}x_n=x_*\in [0,z-\eta]$. Since $u_n$ is decreasing, it follows from \eqref{6.15} and Lemma \ref{levi.1} that
\begin{equation}
\label{6.16}
  \lim_{n\to \infty}u_n (x) = 0 \quad \hbox{uniformly in} \;\; [z-\tfrac{\eta}{2},1].
\end{equation}
Hence, by the concavity of $u_n$ in $[0,z)$, we see that, for sufficiently large $n$,
$$
  u_n'(z-\tfrac{\eta}{2})<\frac{u_n(z-\tfrac{\eta}{2})-u_n(x_n)}{z -\tfrac{\eta}{2}-x_n}= \frac{u_n(z-\tfrac{\eta}{2})-M}{z-\tfrac{\eta}{2}- x_n }  < -\frac{1}{2}\frac{M}{z-\tfrac{\eta}{2}- x_*}.
$$
In view of \eqref{6.16}, this  forces $u_n$ to vanish, by its concavity, somewhere in $[0,z]$, for large $n$. This contradiction ends the proof.
\end{proof}

\begin{lemma}
\label{levi.6}
 Assume   $(f_2)$ and  $(a_2)$.
 Let $\{(\l_n,u_n)\}_{n\ge1}$ be a sequence of regular positive solutions of  \eqref{1.1}  satisfying \eqref{6.1} and \eqref{6.14}.
 Suppose, in addition, that, for some $y \in [0,1)$, $\e>0$ and $n_0\in \N$, one has that $y+\e \leq x_n$ for all $n\geq n_0$. Then, there holds
\begin{equation*}
  \lim_{n\to \infty} u_n (x)=\infty \quad \hbox{uniformly in}\;\; [0,y].
\end{equation*}
\end{lemma}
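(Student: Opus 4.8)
The plan is to deduce the statement from Lemma~\ref{levi.1} by exploiting the monotonicity of the functions $u_n$. Since $(a_2)$ forces every regular positive solution of \eqref{1.1} to be decreasing on $[0,1]$, for each $n$ one has $\min_{[0,y]}u_n=u_n(y)$; consequently, in order to obtain the claimed uniform convergence on $[0,y]$ it suffices to prove that $u_n(y)\to\infty$.

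First I would establish a lower bound that is uniform on the whole interval $[y,y+\e]$. Let $x_n\in(0,x_\o)$ be the point provided by Lemma~\ref{le4.4} (here \eqref{6.14} is used), so that $u_n(x_n)=M$ for all sufficiently large $n$; enlarging $n_0$ if necessary, we may assume that for every $n\ge n_0$ the point $x_n$ is well defined and $y+\e\le x_n$ holds by hypothesis. Because $u_n$ is decreasing and $y+\e\le x_n$, for every $x\in[y,y+\e]$ and every $n\ge n_0$ we obtain $u_n(x)\ge u_n(y+\e)\ge u_n(x_n)=M>0$.

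Next I would invoke Lemma~\ref{levi.1}: for a.e.\ $x\in[y,y+\e]$ the limit $\lim_{n\to\infty}u_n(x)$ exists and belongs to $\{0,\infty\}$. The lower bound $u_n(x)\ge M$ just obtained rules out the value $0$, hence $\lim_{n\to\infty}u_n(x)=\infty$ for a.e.\ $x\in[y,y+\e]$. Choosing a point $x_*\in(y,y+\e)$ outside the relevant null set and using once more that $u_n$ is decreasing, with $x_*>y$, we conclude $u_n(y)\ge u_n(x_*)\to\infty$. Since $u_n$ is decreasing, this gives $\min_{[0,y]}u_n=u_n(y)\to\infty$, that is, $u_n\to\infty$ uniformly on $[0,y]$.

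I do not expect a genuine obstacle here: the whole argument is the combination of the dichotomy of Lemma~\ref{levi.1} with the two elementary monotonicity remarks above, and the only point deserving a line of care is that the exceptional null set in Lemma~\ref{levi.1} can be avoided when picking $x_*$. If a self-contained quantitative proof were preferred, one could instead argue by contradiction: set $\psi_n=-u_n'/\sqrt{1+(u_n')^2}$, so that $\psi_n'=\l_n a f(u_n)$ a.e., $0\le\psi_n<1$ on $[0,1]$, and $\psi_n$ is monotone on $[0,z]$ and on $[z,1]$, whence $\int_y^{y+\e}|\psi_n'|\,dx<2$; if $u_n(y)\le C$ along a subsequence, with $C\ge M$, then $M\le u_n\le C$ on $[y,y+\e]$ and $(f_2)$ gives $f(u_n)\ge f(C)>0$ there, so that $2>\l_n\int_y^{y+\e}|a|\,f(u_n)\,dx\ge\l_n f(C)\int_y^{y+\e}|a|\,dx\to\infty$, a contradiction, since $\int_y^{y+\e}|a|\,dx>0$ by $(a_2)$.
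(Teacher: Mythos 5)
Your main argument is correct and is essentially the paper's own proof: monotonicity of $u_n$ reduces the claim to showing $u_n(y)\to\infty$, which follows from the dichotomy of Lemma~\ref{levi.1} once one notes $u_n\ge M$ on $[y,y+\e]$; your extra care in picking $x_*$ outside the exceptional null set (and then using $u_n(y)\ge u_n(x_*)$) is a welcome refinement of a point the paper passes over silently. The alternative quantitative argument via $\psi_n=-u_n'/\sqrt{1+(u_n')^2}$ is also sound, but not needed.
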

\begin{proof}
Since $u_n$ is decreasing for all $n\geq 1$, it suffices to show that $\lim_{n\to \infty}u_n(y)=\infty$.
This is an easy consequence of Lemma \ref{levi.1}, because $u_n(y)>M$ for all $n\geq n_0$.
\end{proof}

As a byproduct of  Lemmas \ref{le6.5} and \ref{levi.6}, the next result holds.

\begin{corollary}
\label{covi.1}
 Assume  $(f_2)$ and  $(a_2)$.
 Let $\{(\l_n,u_n)\}_{n\ge1}$ be a sequence of regular positive solutions of  \eqref{1.1}  satisfying \eqref{6.1} and \eqref{6.14}. Then, for every $\eta \in (0,z)$,
\begin{equation*}
  \lim_{n\to \infty} u_n(x) =\infty \quad \hbox{uniformly in}\;\; [0,z-\eta].
\end{equation*}
\end{corollary}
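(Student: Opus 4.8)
The plan is to obtain this statement as an immediate corollary of Lemmas \ref{le6.5} and \ref{levi.6}, with the only care needed being the bookkeeping of quantifiers. Fix $\eta \in (0,z)$. First I would apply Lemma \ref{le6.5}, but with the parameter $\tfrac{\eta}{2} \in (0,z)$ in place of $\eta$: this produces an $n_0 \in \N$ such that the point $x_n$ (where $u_n(x_n)=M$, supplied by Lemma \ref{le4.4}) satisfies $x_n \in (z-\tfrac{\eta}{2}, x_\o)$ for all $n \ge n_0$. Shrinking $\eta$ to $\eta/2$ here is what creates the margin needed in the next step.

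Next I would set $y = z - \eta \in [0,1)$ and $\e = \tfrac{\eta}{2} > 0$. Then $y + \e = z - \tfrac{\eta}{2} < x_n$ for every $n \ge n_0$, so the hypothesis ``$y+\e \le x_n$ for all $n \ge n_0$'' of Lemma \ref{levi.6} is satisfied. That lemma then yields $\lim_{n\to\infty} u_n(x) = \infty$ uniformly in $[0,y] = [0, z-\eta]$, which is exactly the claimed conclusion. Since $\eta \in (0,z)$ was arbitrary, the proof is complete.

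There is no genuine obstacle here; the analytic content lies entirely in the preceding lemmas. Conceptually, $x_n \to z$ forces the ``core'' region where $u_n > M$ to fill up $[0,z)$, and on that region the dichotomy of Lemma \ref{levi.1} (the pointwise limit lies in $\{0,\infty\}$) together with $u_n \ge M$ rules out the limit $0$; uniformity is then free because each $u_n$ is decreasing on $[0,1]$. The single point requiring attention in the write-up is the quantifier order: one must invoke Lemma \ref{le6.5} at level $\eta/2$ (not $\eta$) so that the strict inequality $z-\tfrac{\eta}{2} < x_n$ gives the slack $\e = \eta/2$ demanded by Lemma \ref{levi.6}.
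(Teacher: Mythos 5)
Your argument is correct and is exactly the route the paper takes: the paper states the corollary as a direct byproduct of Lemmas \ref{le6.5} and \ref{levi.6}, and your application of Lemma \ref{le6.5} at level $\eta/2$ to produce the margin $\e=\eta/2$ required by Lemma \ref{levi.6} is the right way to make that combination precise.
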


The next result does estimate  the grow-up rate of $u_n$ to infinity in $[0,z)$ as $n\to \infty$.

\begin{theorem}
\label{levi.7}
 Assume  $(f_2)$ and  $(a_2)$.
 Let $\{(\l_n,u_n)\}_{n\ge1}$ be a sequence of regular positive solutions of  \eqref{1.1}  satisfying \eqref{6.1} and \eqref{6.14}.
Then, for every $\eta \in (0,z)$, there exists a constant $C>0$ and an integer $n_0 \ge 1$ such that
\begin{equation}
\label{6.17}
  u_n(x)\geq C \l_n^\frac{1}{q}\quad \hbox{for all}\;\; x \in [0,z-\eta] \;\; \hbox{and}\;\;  n\geq n_0.
\end{equation}
\end{theorem}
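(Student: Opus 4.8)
The plan is to reduce, using the monotonicity of the $u_n$, the assertion to a lower bound for the single value $u_n(z-\eta)$, and then to obtain that bound by feeding the uniform blow-up of Corollary \ref{covi.1} into the cheap a priori estimate that comes from integrating the equation once. Fix $\eta\in(0,z)$. Since, under $(a_2)$, every regular positive solution of \eqref{1.1} is decreasing on $[0,1]$, one has $u_n(x)\ge u_n(z-\eta)$ for all $x\in[0,z-\eta]$, so it suffices to show that $u_n(z-\eta)\ge C\l_n^{1/q}$ for all large $n$.

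Integrating the equation of \eqref{1.1} on $[0,z-\eta/2]$ and using $u_n'(0)=0$ gives
\[
\l_n\int_0^{z-\eta/2}a(x)f(u_n(x))\,dx=\frac{-u_n'(z-\eta/2)}{\sqrt{1+(u_n'(z-\eta/2))^2}}\in[0,1),
\]
so that $\l_n\int_0^{z-\eta/2}a\,f(u_n)\,dx<1$. By Corollary \ref{covi.1}, applied with $\eta/2\in(0,z)$, $u_n\to\infty$ uniformly on $[0,z-\eta/2]$; hence, for $n$ large, $u_n>M>0$ there and, by the second limit in \eqref{1.2}, $f(u_n(x))\ge\tfrac{h}{2}\,u_n(x)^{-q}$ uniformly on $[0,z-\eta/2]$. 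Since $a\,f(u_n)\ge0$ a.e.\ on $[0,z-\eta]$ for $n$ large, we may drop the part of the integral over $[0,z-\eta]$; and since $u_n(x)\le u_n(z-\eta)$ for $x\in[z-\eta,z-\eta/2]$ by monotonicity while $s\mapsto s^{-q}$ is decreasing, we arrive at
\[
1>\l_n\int_{z-\eta}^{z-\eta/2}a(x)f(u_n(x))\,dx\ge\l_n\,\frac{h}{2}\,u_n(z-\eta)^{-q}\int_{z-\eta}^{z-\eta/2}a(x)\,dx.
\]

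Setting $\alpha_\eta:=\int_{z-\eta}^{z-\eta/2}a(x)\,dx$, which is strictly positive because $a>0$ a.e.\ on $(0,z)$ and $[z-\eta,z-\eta/2]$ has positive length, the last display yields $u_n(z-\eta)^{-q}<\tfrac{2}{h\,\alpha_\eta\,\l_n}$, i.e.\ $u_n(z-\eta)>\big(\tfrac{h\,\alpha_\eta}{2}\big)^{1/q}\l_n^{1/q}$ for all sufficiently large $n$; combined with the monotonicity reduction this is precisely \eqref{6.17}, with $C=\big(\tfrac{h\,\alpha_\eta}{2}\big)^{1/q}$ and a suitable $n_0=n_0(\eta)$. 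The argument is essentially soft, requiring no sub/supersolution or bifurcation input beyond Corollary \ref{covi.1}; the only point needing a little care is that the uniform blow-up and the positivity of the weight integral have to be exploited on the slightly enlarged interval $[0,z-\eta/2]$, a genuine subinterval of $(0,z)$, so that both $f(u_n)\gtrsim u_n^{-q}$ holds there and $\alpha_\eta>0$, the global bound $\l_n\int_0^{z-\eta/2}a\,f(u_n)<1$ being otherwise free from the equation and the boundedness of the flux term $u_n'/\sqrt{1+(u_n')^2}$.
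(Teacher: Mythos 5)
Your proof is correct, and it takes a genuinely different and more elementary route than the paper's. The paper works with the transformed identity \eqref{6.13}, obtained by dividing the flux by $f(u_n)$, integrates it over $[x,x_n]$ (where $x_n$ is the point with $u_n(x_n)=M$ furnished by Lemma \ref{le4.4} and localized near $z$ by Lemma \ref{le6.5}), and exploits the sign of $\big(1/f(u_n)\big)'$ coming from the monotonicity of $f$ on $(M,\infty)$ in $(f_2)$ to arrive at the pointwise bound $1/f(u_n(x))>\l_n\int_{z-\eta}^{z-\eta/2}a$. You instead integrate the raw equation from $0$, bound the flux term $-u_n'/\sqrt{1+(u_n')^2}$ crudely by $1$, discard the nonnegative contribution of $a\,f(u_n)$ over $[0,z-\eta]$, and use the monotonicity of $u_n$ together with that of $s\mapsto s^{-q}$ to pull $u_n(z-\eta)^{-q}$ out of the integral over $[z-\eta,z-\eta/2]$; the decreasing character of $u_n$ then propagates the bound to all of $[0,z-\eta]$. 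Both arguments feed the uniform blow-up of Corollary \ref{covi.1} into the asymptotics $f(u)\sim h\,u^{-q}$ at the final step, so the dependence on the earlier lemmas of Section \ref{s6} is comparable (Corollary \ref{covi.1} itself rests on $(f_2)$), but your version avoids the identity \eqref{6.13} entirely and hence makes no direct use of $f\in\mc{C}^1[M,\infty)$ or of the decrease of $f$ beyond $M$; what the paper's route buys in exchange is a pointwise estimate at every $x\in[0,z-\eta]$ in a form that is reused symmetrically in the decay estimate of Theorem \ref{levi.9}. Two cosmetic points: the nonnegativity of $a\,f(u_n)$ should be invoked a.e.\ on $[0,z-\eta/2]$ (not just $[0,z-\eta]$) to justify dropping that part of the integral, and the constant $C=\big(\tfrac{h\,\alpha_\eta}{2}\big)^{1/q}$ of course depends on $\eta$, exactly as in the statement.
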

\begin{proof}
Indeed, by Lemma \ref{le6.5}, there exists $n_0\in \N$ such that $x_n\geq z-\frac{\eta}{2}$ for all $n\geq n_0$. Thus, fixing $n\geq n_0$ and $x\in [0,z-\eta]$, and integrating in $[x,x_n]$
the identity  \eqref{6.13} we find that
\begin{align*}
  \l_n \int_x^{x_n}a(s) \, ds
  & =   \frac{1}{f(u_n(x))} \frac{u_n'(x)}{\sqrt{1+(u_n'(x))^2}} +
   \frac{1}{f(M)} \frac{-u_n'(x_n)}{\sqrt{1+(u_n'(x_n))^2}}
   \\
   & \hspace{4.5cm} +\int_{x}^{x_n}
    \left( \frac{1}{f(u_n(s))}  \right)' \frac{u_n'(s)}{\sqrt{1+(u_n'(s))^2}}\,ds
    \\
    & < \frac{1}{f(M)} - \int_{x}^{x_n}
   \left( \frac{1}{f(u_n(s))}  \right)' \frac{-u_n'(s)}{\sqrt{1+(u_n'(s))^2}}\,ds.
\end{align*}
Since $u_n(s)> M$ for each $s\in (x,x_n)$, $u_n$ is decreasing and  $f$ is decreasing in $(M,\infty)$,
 we find that
$$
- \left( \frac{1}{f(u_n(s))}  \right)' =  \frac{f'(u_n(s)) \, u'_n(s) }{(f(u_n(s)))^2}  >0 \quad \hbox{in } (x,x_n).
$$
Thus, the previous estimate implies that
\[
  \l_n \int_x^{x_n}a(s) \, ds<
 \frac{1}{f(M)}- \int_{x}^{x_n} \left( \frac{1}{f(u_n(s))}  \right)' \,ds=
  \frac{1}{f(u_n(x))}
\]
for all $x< x_n$ and $n\geq n_0$. Consequently, since  for every $n\geq n_0$, one has that
$x \leq z-\eta < z-\frac{\eta}{2} \leq x_n$, we find that, for all $x\in [0,z-\eta]$,
\[
  \frac{1}{f(u_n(x))} > \l_n \int_x^{x_n}a(s) \, ds  > \l_n \int_{z-\eta}^{z-\frac{\eta}{2}}a(s)\,ds
\]
and then
\[
  \frac{f(u_n(x))}{u_n^{-q}(x)} \int_{z-\eta}^{z-\frac{\eta}{2}} a(s)\,ds < \frac{u_n^q(x)}{\l_n}.
\]
By Corollary \ref{covi.1}, it follows from \eqref{1.2} that $\lim_{n\to \infty}\frac{f(u_n(x))}{u_n^{-q}(x) }=h$ uniformly in $[0,z-\eta]$. Therefore, we conclude that
$$
  \liminf_{n\to\infty} \frac{u_n^q(x)}{\l_n} \geq h \int_{z-\eta}^{z-\frac{\eta}{2}} a(s)\,ds \quad
\hbox{uniformly in } [0,z-\eta].
$$
Hence  the estimate \eqref{6.17}   follows.
\end{proof}

\begin{lemma}
\label{levi.8}
 Assume  $(f_2)$ and  $(a_2)$.
 Suppose further that
 \begin{equation}
 \label{6.18}
\supess{[z+\varepsilon, x_\o]} a <0 \quad \text{for all small } \varepsilon >0.
\end{equation}
Let $\{(\l_n,u_n)\}_{n\ge1}$ be a sequence of regular positive solutions of  \eqref{1.1}  satisfying \eqref{6.1} and \eqref{6.14}.   Then, there exists
\begin{equation}
\label{6.19}
  \lim_{n\to \infty}x_n =z.
\end{equation}
\end{lemma}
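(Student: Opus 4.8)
The plan is to prove $\lim_n x_n=z$ by supplementing the one-sided bound already in hand with a curvature estimate on the right of $z$, exploiting two facts: that the potential $F$ is \emph{unbounded} at $+\infty$ (sublinearity, \eqref{Fq}, as $0<q<1$) and that, by \eqref{6.18}, the weight $a(x)$ stays bounded away from $0$ on $[z+\e,x_\o]$.

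First I would note that Lemma \ref{levi.6}'s predecessor Lemma \ref{le6.5} already gives $x_n>z-\eta$ for $n$ large, for every $\eta\in(0,z)$, so $\liminf_n x_n\ge z$; hence it remains only to show $\limsup_n x_n\le z$. Assume, for contradiction, that $\limsup_n x_n>z$. Since $x_n<x_\o$ for all $n$ by Lemma \ref{le4.4}, one has $z<\limsup_n x_n\le x_\o$, so I may fix $\e>0$ with $z+\e<\limsup_n x_n$ and small enough that, by \eqref{6.18}, $-c:=\esssup_{[z+\e/2,x_\o]}a<0$ (meaningful since $z+\e/2<x_\o$). Then $x_n>z+\e$ for infinitely many $n$, and I restrict to such $n$. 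For these $n$ we have $z+\e/2<x_n<x_\o$, hence $[z+\e/2,x_n]\subset[z+\e/2,x_\o]$ and $a\le-c$ a.e.\ there; moreover, by Lemma \ref{levi.6} (with $y=z+\e/2$), $u_n\to\infty$ uniformly on $[0,z+\e/2]$, so in particular $A_n:=u_n(z+\e/2)\to\infty$ and therefore $F(A_n)\to+\infty$.

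The key step is the differential identity obtained from \eqref{1.3}. Setting $\chi_n:=(1+(u_n')^2)^{-1/2}\in(0,1]$ and using $u_n''=-\l_n a\,f(u_n)\,(1+(u_n')^2)^{3/2}$, one finds, a.e.\ on $[z+\e/2,x_n]$,
\[
  \chi_n'=-\frac{u_n'\,u_n''}{(1+(u_n')^2)^{3/2}}=\l_n\,a(x)\,f(u_n(x))\,u_n'(x)=\l_n\,a(x)\,\big(F(u_n(x))\big)'.
\]
On $[z+\e/2,x_n]$ we have $u_n>M>0$, so $\big(F(u_n)\big)'=f(u_n)\,u_n'\le 0$ (the solution is decreasing under $(a_2)$), while $a+c\le 0$; hence $a\,(F(u_n))'\ge -c\,(F(u_n))'$ a.e., that is $\chi_n'\ge -c\,\l_n\,(F(u_n))'$ a.e.\ on $[z+\e/2,x_n]$. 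Integrating over this interval, and using $u_n(x_n)=M$ and $0<\chi_n\le 1$, we get
\[
  1\ge\chi_n(x_n)\ge\chi_n\big(z+\tfrac\e2\big)+c\,\l_n\big(F(A_n)-F(M)\big)\ge c\,\l_n\big(F(A_n)-F(M)\big),
\]
whence $F(A_n)\le F(M)+\tfrac{1}{c\,\l_n}$. The right-hand side is bounded, while $F(A_n)\to+\infty$: a contradiction. Therefore $\limsup_n x_n\le z$, which, combined with the first step, proves \eqref{6.19}.

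I do not anticipate any real obstacle; what needs care are only the ancillary points. First, $\chi_n=(1+(u_n')^2)^{-1/2}$ and $F(u_n)$ are absolutely continuous on $[z+\e/2,x_n]$ — immediate since $u_n\in W^{2,1}(0,1)$ gives $u_n'\in AC[0,1]$, and both are $\mathcal C^1$ functions of $u_n'$, resp.\ of $u_n$ — so the Fundamental Theorem of Calculus applies. Second, $u_n$ being decreasing with $u_n(x_n)=M$ forces $u_n>M$ on $(z,x_n)$, which legitimizes the use of $f\in\mathcal C^1[M,\infty)$ and of the sign of $(F(u_n))'$ there. The substance of the argument is that \eqref{6.18} converts the identity for $\chi_n'$ into a one-sided bound whose right-hand side is $-c\,\l_n$ times the total decrease of $F(u_n)$ across $[z+\e/2,x_n]$; this forces that decrease to be $O(1/\l_n)$, and since $F$ is unbounded at $+\infty$ it prevents $u_n$ from being large at $z+\e/2$ — contradicting Lemma \ref{levi.6}.
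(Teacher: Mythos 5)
Your proposal is correct and follows essentially the same route as the paper: both pass to a subsequence with $x_n\ge z+\e$, integrate the identity $\bigl(\,(1+(u_n')^2)^{-1/2}\bigr)'=\l_n a f(u_n)u_n'$ over an interval on the right of $z$ where \eqref{6.18} bounds $a$ away from zero, and obtain $\l_n\int_M^{u_n(z+\e/2)}f(s)\,ds=O(1)$, contradicting Lemma \ref{levi.6}. The only cosmetic difference is the final step: the paper deduces $u_n(z+\e/2)\to M$, whereas you invoke the divergence of $F$ at infinity; these are equivalent.
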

\begin{proof}
On the contrary, assume that \eqref{6.19} is not true. Then, owing to Lemma \ref{le6.5},
there exist $\eta>0$ and a subsequence, still labeled by $n$,
such that $z+\eta \leq x_n$ for all $n\geq 1$. Then, since for every $n\geq 1$ and $x\in [z,1]$,
\begin{equation}
\label{4.17}
  \left( \frac{1}{\sqrt{1+(u'_n(x))^2}}\right)'=\l_n a(x)f(u_n(x))u_n'(x) \ge 0,
\end{equation}
integrating in $[z,x_n]$, we obtain
\begin{align*}
  1 & \geq \frac{1}{\sqrt{1+(u'_n(x_n))^2}}  - \frac{1}{\sqrt{1+(u'_n(z))^2}}
  =
  \l_n \int_{z}^{x_n} a(x)f(u_n(x))u_n'(x)\,dx
\\
& \geq  \l_n \int_{z+\frac{\eta}{2}}^{x_n} a(x)f(u_n(x))u_n'(x)\,dx
\geq
-\l_n  \supess{[z+\frac{\eta}{2},x_\o]} a \,  \int_{z+\frac{\eta}{2}}^{x_n} f(u_n(x))u_n'(x)\,dx
\\
& \geq
 -  \l_n    \supess{[z+\frac{\eta}{2},x_\o]} a \,  \int_{M}^{u_n(z+\frac{\eta}{2})}f(s)\,ds  .
\end{align*}
Therefore, by \eqref{6.1}, we obtain $\lim_{n\to \infty} \int_{M}^{u_n(z+\frac{\eta}{2})}f(s)\,ds =0$,
which implies $\lim_{n\to \infty} u_n(z+\tfrac{\eta}{2}) =M$, while, according to Lemma \ref{levi.6},
$\lim_{n\to \infty} u_n(z+\tfrac{\eta}{2}) =\infty$. This contradiction ends the proof.
\end{proof}

\begin{rem}
Under $(a_2)$,  the condition  \eqref{6.18} holds if, for instance,  the function $a(x) $ is continuous in $(z,x_\o]$.
\end{rem}

As a direct consequence of Lemma \ref{levi.8}, the next result holds.

\begin{corollary}
\label{covi.2}
Assume  $(f_2)$,   $(a_2)$ and  \eqref{6.18}.
Let $\{(\l_n,u_n)\}_{n\ge1}$ be a sequence of regular positive solutions of  \eqref{1.1}  satisfying \eqref{6.1} and \eqref{6.14}. Then, for every $\eta \in (0,1-z)$,
\begin{equation*}
  \lim_{n\to \infty} u_n (x)= 0 \quad \hbox{uniformly in}\;\; [z+\eta,1].
\end{equation*}
\end{corollary}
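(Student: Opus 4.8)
The plan is to read off the uniform decay on $[z+\eta,1]$ directly from Lemma \ref{levi.8}, using only the monotonicity of the solutions together with the pointwise dichotomy of Lemma \ref{levi.1}. Fix $\eta\in(0,1-z)$. By Lemma \ref{levi.8} we have $\lim_{n\to\infty}x_n=z$, so there is $n_1\in\N$ with $x_n<z+\tfrac{\eta}{2}$ for all $n\ge n_1$. Since, under $(a_2)$, each $u_n$ is decreasing in $[0,1]$ and $u_n(x_n)=M$ by \eqref{6.15}, this gives $u_n(x)\le M$ for every $x\in[z+\tfrac{\eta}{2},1]$ and every $n\ge n_1$.

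First I would fix, using Lemma \ref{levi.1}, a point $x_*\in(z+\tfrac{\eta}{2},z+\eta)$ belonging to the set of full measure on which the limit in \eqref{6.3} exists; thus $\lim_{n\to\infty}u_n(x_*)\in\{0,\infty\}$. Because $u_n(x_*)\le M$ for all $n\ge n_1$, the value $\infty$ is ruled out, and hence $\lim_{n\to\infty}u_n(x_*)=0$.

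Then the monotonicity of each $u_n$ promotes this single-point convergence to the uniform statement: for $n\ge n_1$ and every $x\in[z+\eta,1]\subseteq[x_*,1]$ we have $0\le u_n(x)\le u_n(x_*)$, so that $\sup_{[z+\eta,1]}u_n\le u_n(x_*)\to 0$ as $n\to\infty$, which is exactly $u_n\to0$ uniformly in $[z+\eta,1]$. I do not expect any real obstacle here: all the substantial analysis, in particular the use of \eqref{6.18} and \eqref{6.14}, is already contained in Lemma \ref{levi.8}, and what remains is only the elementary bookkeeping above; the one mild point to watch is to pick $x_*$ both in the full-measure set of \eqref{6.3} and strictly inside $(z+\tfrac{\eta}{2},z+\eta)$, which is possible since that set meets $(z+\tfrac{\eta}{2},z+\eta)$ in a set of full measure.
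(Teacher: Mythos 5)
Your proof is correct and follows essentially the same route as the paper's: Lemma \ref{levi.8} places $x_n$ to the left of $z+\tfrac{\eta}{2}$, the dichotomy of Lemma \ref{levi.1} then forces the limit to be $0$ rather than $\infty$, and monotonicity upgrades this to uniform convergence on $[z+\eta,1]$. The only difference is that you take care to evaluate at a point $x_*$ lying in the full-measure set of \eqref{6.3}, whereas the paper applies Lemma \ref{levi.1} directly at $z+\eta$; your version is the slightly more scrupulous reading of the same argument.
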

\begin{proof}
According to Lemma \ref{levi.8}, for every  $\eta \in (0,1-z)$, there exists $n_0$ such that,  for all $n\geq n_0$,  $x_n<z+\tfrac{\eta}{2}$ and hence $u_n(z+\eta) < u_n(x_n)= M$. Then, Lemma \ref{levi.1} yields $\lim_{n\to \infty} u_n(z+\eta)=0$. Since $u_n$ is decreasing, the conclusion follows.
\end{proof}

Finally, the next result estimates the decay rate of $u_n$ in the interval $(z,1]$.

\begin{theorem}
\label{levi.9}
Assume  $(f_2)$,  $(a_2)$ and    \eqref{6.18}.
Let $\{(\l_n,u_n)\}_{n\ge1}$ be a sequence of regular strictly positive solutions of  \eqref{1.1}  satisfying \eqref{6.1} and \eqref{6.14}. Then, for every $\eta \in (0,1-z)$, there exist  $C
>0$ and   $n_0 \in\N$ such that
\begin{equation}
\label{6.20}
  u_n(x)\leq C \l_n^{-\frac{1}{p}}\quad \hbox{for all}\;\; x \in [z+\eta,1] \;\; \hbox{and}\;\;  n\geq n_0.
\end{equation}
\end{theorem}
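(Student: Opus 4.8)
The plan is to reduce the whole estimate to a single upper bound for $u_n(z+\eta)$: indeed, by $(a_2)$ every regular positive solution $u_n$ is decreasing on $[0,1]$, so once we know $u_n(z+\eta)\le C\l_n^{-1/p}$ we immediately obtain $u_n(x)\le u_n(z+\eta)\le C\l_n^{-1/p}$ for all $x\in[z+\eta,1]$. Two preliminary ingredients would be set up first. On the one hand, by Corollary~\ref{covi.2} applied with $\eta/2\in(0,1-z)$, we have $u_n\to0$ uniformly on $[z+\eta/2,1]$, so there is $n_0\in\N$ with $0<u_n(x)<M$ for every $x\in[z+\eta/2,1]$ and every $n\ge n_0$. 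On the other hand, since $\lim_{u\to0^+}f(u)/u^p=1$ by $(f_1)$ and $f$ is continuous and strictly positive on $(0,M]$, there is $c_1>0$ with $f(u)\ge c_1 u^p$ for all $u\in[0,M]$.

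The core of the argument is then to integrate the differential equation of \eqref{1.1} over $[z+\eta/2,z+\eta]\subset(z,1)$; since $u_n\in\mc{C}^1[0,1]$ this yields
\[
\frac{-u_n'(z+\eta)}{\sqrt{1+(u_n'(z+\eta))^2}}-\frac{-u_n'(z+\eta/2)}{\sqrt{1+(u_n'(z+\eta/2))^2}}=\l_n\int_{z+\eta/2}^{z+\eta}a(t)\,f(u_n(t))\,dt.
\]
Here the decisive point is the intrinsic bound of the curvature operator: since $u_n$ is decreasing, each of the two quotients on the left belongs to $[0,1)$, so the left-hand side is $>-1$; and, by $(a_2)$, $a<0$ a.e.\ on $(z,1)$ while $u_n>0$ there, so the right-hand side equals $-\l_n\int_{z+\eta/2}^{z+\eta}|a(t)|\,f(u_n(t))\,dt$. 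Combining, $\l_n\int_{z+\eta/2}^{z+\eta}|a(t)|\,f(u_n(t))\,dt<1$ for all $n\ge n_0$. Finally, on $[z+\eta/2,z+\eta]$ the function $u_n$ is decreasing and $<M$, so $f(u_n(t))\ge c_1 u_n(t)^p\ge c_1\,u_n(z+\eta)^p$; writing $c_2:=\int_{z+\eta/2}^{z+\eta}|a(t)|\,dt$ — finite since $a\in L^\infty(0,1)$ and strictly positive since $a<0$ a.e.\ on that interval — we arrive at $\l_n c_1 c_2\,u_n(z+\eta)^p<1$, hence $u_n(z+\eta)<(c_1c_2)^{-1/p}\l_n^{-1/p}$. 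Taking $C:=(c_1c_2)^{-1/p}$ and invoking monotonicity of $u_n$ gives \eqref{6.20}.

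I do not expect a genuine obstacle. The one point requiring care is that the sub-power estimate $f(u)\ge c_1u^p$ is available only on the range $[0,M]$, so it may be applied on the test interval $[z+\eta/2,z+\eta]$ only once $u_n<M$ there is known; this is exactly what Corollary~\ref{covi.2} — and behind it Lemma~\ref{levi.8} together with hypothesis \eqref{6.18} — provides for all large $n$. Everything else is forced by the sign of $a$ on $(z,1)$ and by the universal bound $\big|u_n'/\sqrt{1+(u_n')^2}\big|<1$, which is precisely what lets the estimate go through without any control on $\|u_n'\|_{L^\infty(0,1)}$, that norm being in general unbounded along the sequence, as noted in \eqref{6.2}.
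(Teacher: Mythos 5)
Your proof is correct and rests on the same mechanism as the paper's: integrate the equation over a fixed subinterval of $(z,z+\eta]$ where $a<0$, bound the resulting flux difference by $1$ via $\bigl|u_n'/\sqrt{1+(u_n')^2}\bigr|<1$, and convert $f(u_n)$ into $u_n^p$ using the uniform decay of $u_n$ on $[z+\eta/2,1]$ from Corollary \ref{covi.2}. The only differences are in bookkeeping: the paper first divides the equation by $f(u_n)$ (exploiting the monotonicity of $f$ on $(0,M)$ from $(f_2)$ to control $1/f(u_n(t))$ by its value at the right endpoint) and distinguishes the cases $x_n\ge z$ and $x_n<z$, whereas you integrate the raw equation over $[z+\eta/2,z+\eta]$ and invoke the uniform lower bound $f(u)\ge c_1u^p$ on $[0,M]$, which legitimately avoids both the division and the case analysis.
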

\begin{proof}
Pick $x \in [z+\eta,1]$. By \eqref{6.19}, there exists $n_0\in \NN$ such that
\begin{equation}
\label{6.21}
 z-\frac{\eta}{2}< x_n < z+\frac{\eta}{2}<z+\eta \leq x \qquad \hbox{for all $n\ge n_0$.}
 \end{equation}
Note that $0<u_n(t)\le M$ for all $n\geq n_0$ and $t\in [x_n,x]$. Thus, for any $n\geq n_0$,
since  $u_n(t)$ is decreasing, the composition $f(u_n(t))$ is  decreasing in $  [x_n,x]$ and hence
\begin{equation}
0 <\frac{1}{f(u_n(x_n))} \leq   \frac{1}{f(u_n(t))}  \leq   \frac{1}{f(u_n(x))}  \quad \hbox{for all } t\in [x_n,x].
\end{equation}
\par
Suppose $z\leq x_n$. Then, from the differential equation in \eqref{1.1}, we get, for all $t\in[x_n,x]$,
\begin{equation}
0 <- \l_n a(t)\  =
  \left( \frac{u_n'(t)}{\sqrt{1+(u_n'(t))^2}}\right)' \frac{1}{f(u_n(t))}
  \le  \left( \frac{u_n'(t)}{\sqrt{1+(u_n'(t))^2}}\right)' \frac{1}{f(u_n(x)}
\end{equation}
and hence integrating in $[x_n,x]$
\begin{align}
- \l_n\int_{x_n}^x a(t)\,dt &\le
\frac{1}{f(u_n(x))} \int_{x_n}^x \left( \frac{u_n'(t)}{\sqrt{1+(u_n'(t))^2}}\right)' \,dt
\\
&=
\frac{1}{f(u_n(x))}\left( \frac{u_n'(x)}{\sqrt{1+(u_n'(x))^2}}- \frac{u_n'(x_n)}{\sqrt{1+(u_n'(x_n))^2}}\right)  \le \frac{1}{f(u_n(x))}.
\end{align}
By \eqref{6.21}, we have that, for all $n\ge n_0$,
\[
  \int_{x_n}^x (-a(t))\,dt \ge  \int_{z+\frac{\eta}{2}}^{z+\eta}(-a(t))\,dt >0,
\]
and hence
\[
\l_n \int_{z+\frac{\eta}{2}}^{z+\eta}(-a(s))\,ds \le \frac{1}{f(u_n(x))}
\]
for all $x \in [z+\eta,1]$. Then,  the  estimate \eqref{6.20}   follows readily from the fact that
$\lim_{n\to \infty}\frac{f(u_n(x)}{u_n^p(x)}=1$, which is a consequence of \eqref{1.2} by Corollary \ref{covi.2}.
\par
Now, assume that $x_n< z$.
Then, as above, from the differential equation in \eqref{1.1}  we get, for all $t\in[z,x]$,
\begin{equation}
0 <- \l_n a(t)
  \le  \left( \frac{u_n'(t)}{\sqrt{1+(u_n'(t))^2}}\right)' \frac{1}{f(u_n(x)}
\end{equation}
and hence integrating in $[z,x]$
\begin{align}
- \l_n\int_{z}^x a(t)\,dt
&\le
 \frac{1}{f(u_n(x))}\left( \frac{u_n'(x)}{\sqrt{1+(u_n'(x))^2}}- \frac{u_n'(z)}{\sqrt{1+(u_n'(z))^2}}\right)  \le \frac{1}{f(u_n(x))}.
\end{align}
Thus, thanks again to \eqref{6.21}, we find, for all $n\ge n_0$,
\[
 \l_n   \int_{z }^{z+\eta}(-a(t))\,dt \le   \l_n\int_{z}^x (-a(t))\,dt
  \leq \frac{1}{f(u_n(x))}
\]
and the argument of the previous case allows to complete the proof.
\end{proof}

The next result establishes that, in addition, the solutions $u_n$ are rather flat on
\[
  I_\eta  = [0,z-\eta]\cup[z+\eta,1]
\]
for sufficiently small  $\eta>0$ and large $n$.

\begin{lemma}
\label{levi.10}
Assume  $(f_2)$,  $(a_2)$ and \eqref{6.18}.
Let $\{(\l_n,u_n)\}_{n\ge1}$ be a sequence of regular strictly positive solutions of  \eqref{1.1}  satisfying \eqref{6.1} and \eqref{6.14}.
Then, for every $\eta >0$ small enough, there exist  $C >0$ and $n_0\in \NN$ such that
\begin{equation}
\label{6.22}
  |u_n'(x)|\leq C \quad \hbox{for all}\;\; x\in I_\eta\;\;\hbox{and}\;\; n\geq n_0
\end{equation}
and, actually,
\begin{equation}
\label{4.21b}
\lim_{n\to \infty}u_n'(x) = 0\quad \hbox{uniformly in} \;\; [z+\eta,1].
\end{equation}
\end{lemma}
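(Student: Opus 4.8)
The plan is to handle the two components of $I_\eta=[0,z-\eta]\cup[z+\eta,1]$ separately, both times through the first–order energy identity obtained by multiplying the equation in \eqref{1.1} by $u_n'$. Since $\big(\tfrac{u_n'}{\sqrt{1+(u_n')^2}}\big)'u_n'=-\tfrac{d}{dx}\tfrac{1}{\sqrt{1+(u_n')^2}}$ and $f(u_n)u_n'=\tfrac{d}{dx}F(u_n)$, one gets
\[
\frac{d}{dx}\,\frac{1}{\sqrt{1+(u_n'(x))^2}}=\l_n\,a(x)\,\frac{d}{dx}F(u_n(x))\qquad\text{a.e. in }(0,1),
\]
while $\tfrac{1}{\sqrt{1+(u_n')^2}}\in(0,1]$, with endpoint value $1$ at $x=0$ and at $x=1$ since $u_n'(0)=u_n'(1)=0$. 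Throughout I use that, by $(a_2)$, $u_n$ is decreasing on $[0,1]$, concave on $[0,z]$ and convex on $[z,1]$, so that $|u_n'|$ is nondecreasing on $[0,z]$ and nonincreasing on $[z,1]$.

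\smallskip\noindent\emph{The interval $[z+\eta,1]$.} Here $a<0$ and $u_n'\le 0$; integrating the identity on $[x,1]$ and using $u_n'(1)=0$ gives, for every $x\in[z+\eta,1]$,
\[
1-\frac{1}{\sqrt{1+(u_n'(x))^2}}=\l_n\int_x^1|a(t)|\,f(u_n(t))\,|u_n'(t)|\,dt\le\|a\|_{L^\infty(0,1)}\,\l_n\big(F(u_n(x))-F(u_n(1))\big),
\]
because $f(u_n)|u_n'|=-\tfrac{d}{dx}F(u_n)$ and $|a|\le\|a\|_{L^\infty(0,1)}$. As $u_n$ is decreasing, $F$ is increasing and $F\ge 0$, the right–hand side is $\le\|a\|_{L^\infty(0,1)}\,\l_n\,F(u_n(z+\eta))$. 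Now Corollary \ref{covi.2} gives $u_n(z+\eta)\to 0$, Theorem \ref{levi.9} gives $u_n(z+\eta)\le C\l_n^{-1/p}$, and \eqref{Fp} gives $F(u_n(z+\eta))=(1+o(1))\tfrac{u_n(z+\eta)^{p+1}}{p+1}$, whence
\[
\l_n\,F(u_n(z+\eta))\le(1+o(1))\,\frac{C^{p+1}}{p+1}\,\l_n^{\,1-\frac{p+1}{p}}=(1+o(1))\,\frac{C^{p+1}}{p+1}\,\l_n^{-1/p}\longrightarrow 0 .
\]
Hence $\tfrac{1}{\sqrt{1+(u_n'(x))^2}}\to 1$ uniformly on $[z+\eta,1]$, which is \eqref{4.21b} and, in particular, the part of \eqref{6.22} relative to $[z+\eta,1]$.

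\smallskip\noindent\emph{The interval $[0,z-\eta]$.} Since $|u_n'|$ is nondecreasing on $[0,z]$ and $u_n'(0)=0$, one has $|u_n'(x)|\le|u_n'(z-\eta)|$ on $[0,z-\eta]$, so it suffices to keep $\tfrac{1}{\sqrt{1+(u_n'(z-\eta))^2}}$ away from $0$. Integrating the energy identity on $[z-\eta,z]$ (where $a>0$, $u_n'\le 0$) yields
\[
\frac{1}{\sqrt{1+(u_n'(z-\eta))^2}}=\frac{1}{\sqrt{1+(u_n'(z))^2}}+\l_n\int_{z-\eta}^z a(t)\,f(u_n(t))\,|u_n'(t)|\,dt,
\]
with both summands nonnegative, so it is enough to show $\displaystyle\liminf_{n\to\infty}\l_n\int_{z-\eta}^z a\,f(u_n)\,|u_n'|\,dt>0$. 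On the other hand, integrating on $[0,z]$ gives $\l_n\int_0^z a\,f(u_n)\,|u_n'|\,dt=1-\tfrac{1}{\sqrt{1+(u_n'(z))^2}}\to 1$, because $|u_n'(z)|=\|u_n'\|_{L^\infty(0,1)}\to\infty$ (this follows from $u_n(0)\ge u_n(z-\eta)\ge C\l_n^{1/q}\to\infty$, by \eqref{6.4} and Theorem \ref{levi.7}, together with the concavity of $u_n$ on $[0,z]$). Thus a full unit of energy is produced on $[0,z]$, and the claim reduces to showing that a definite part of it is produced on $[z-\eta,z]$. I would obtain this via the substitution $s=u_n(t)$ ($u_n$ being strictly decreasing on $[0,z]$), which turns $\l_n\int_{z-\eta}^z a f(u_n)|u_n'|\,dt$ into $\l_n\int_{u_n(z)}^{u_n(z-\eta)}a\big(u_n^{-1}(s)\big)f(s)\,ds$, combined with Theorem \ref{levi.7} (which on $[0,z-\eta]$ forces $u_n\ge C'\l_n^{1/q}$, hence $f(u_n)\le(1+o(1))h\,u_n^{-q}$ and $\l_n f(u_n)$ bounded there), so that the portion of the unit coming from $[0,z-\eta]$ stays bounded away from $1$; then $\tfrac{1}{\sqrt{1+(u_n'(z-\eta))^2}}\ge c>0$ for large $n$, i.e.\ $|u_n'|\le\sqrt{1/c^2-1}$ on $[0,z-\eta]$, which is \eqref{6.22}, completing the proof.

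\smallskip
The step I expect to be the real obstacle is precisely the last one: proving that the $[0,z-\eta]$–contribution to $\l_n\int_0^z a f(u_n)|u_n'|$ does not exhaust the whole unit. The bound $\l_n f(u_n)\le C$ on $[0,z-\eta]$ only makes that contribution $O(1)$, not $<1-c$, so one needs something sharper near $z$: either a two–sided estimate $C'\l_n^{1/q}\le u_n\le C''\l_n^{1/q}$ on $[0,z-\eta]$ dual to Theorem \ref{levi.7} — and the flatness $u_n(0)-u_n(z-\eta)=O(1)$ it eventually entails — or a direct analysis of the transition layer of $u_n$ across the level $M$ near $x_n$ (recall $x_n\to z$ by Lemma \ref{levi.8} and $u_n(x_n)=M$), separating the cases $x_n<z$, in which the mass of $f(u_n)$ concentrates in $(z-\eta,z)$ as on the right of $z$, and $x_n\ge z$, in which $u_n$ stays above $M$ on $[0,z]$ and the weight $|u_n'|$, which blows up only near $z$, carries the mass there. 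This layer analysis is where the substantive work lies; the rest is bookkeeping with the energy identity.
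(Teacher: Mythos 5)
Your treatment of $[z+\eta,1]$ is correct and is in fact a different (and quite clean) route: the paper deduces \eqref{4.21b} from the uniform convergence $u_n\to 0$ on $[z+\eta,1]$ together with the convexity of $u_n$ on $(z,1]$, whereas you obtain it from the first integral $\frac{d}{dx}\big(1+(u_n')^2\big)^{-1/2}=\l_n a\,\frac{d}{dx}F(u_n)$ combined with the decay estimate of Theorem \ref{levi.9} and \eqref{Fp}. Both arguments are sound and neither is circular, since Theorem \ref{levi.9} is proved independently of the present lemma.

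The half on $[0,z-\eta]$, however, is not a proof, and you say so yourself: everything hinges on showing that $\limsup_{n}\l_n\int_0^{z-\eta}a\,f(u_n)\,|u_n'|\,dx<1$, and none of the estimates you have actually delivers this. The bound $\l_n f(u_n)\le C$ on $[0,z-\eta]$ (from Theorem \ref{levi.7} and \eqref{1.2}) only yields $\l_n\int_0^{z-\eta}a f(u_n)|u_n'|\,dx\le C\|a\|_{L^\infty(0,1)}\,(u_n(0)-u_n(z-\eta))$, and the boundedness of $u_n(0)-u_n(z-\eta)$ is precisely the flatness that \eqref{6.22} is meant to produce (cf.\ Corollary \ref{covi.3}), so the reasoning is circular; the alternative bound by $C\|a\|_{L^\infty(0,1)}(z-\eta)\,|u_n'(z-\eta)|$ feeds back into an inequality of the form $1-(1+v^2)^{-1/2}\le Kv$ with $v=|u_n'(z-\eta)|$, which is satisfied by arbitrarily large $v$ and hence gives nothing. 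The missing idea, which is the paper's actual argument, is to exploit the monotonicity of $x\mapsto f(u_n(x))$ on $[0,x_n]\supseteq[0,z-\tfrac{\eta}{2}]$ (there $u_n\ge M$ and $f$ is decreasing on $[M,\infty)$) to convert the integrated equation into the pointwise estimate
\begin{equation}
  \l_n f(u_n(x))\int_0^x a(t)\,dt\;\ge\;\frac{-u_n'(x)}{\sqrt{1+(u_n'(x))^2}}\,,
  \qquad x\in[0,z-\tfrac{\eta}{2}],
\end{equation}
and then argue by contradiction: if $u_n'(z-\eta)\to-\infty$ along a subsequence, then $\tfrac{-u_n'}{\sqrt{1+(u_n')^2}}\to 1$ uniformly on $[z-\eta,z]$ by concavity, and integrating the equation itself (not the energy identity) over $[z-\eta,z-\tfrac{\eta}{2}]$ gives
\begin{equation}
  \frac{u_n'(z-\eta)}{\sqrt{1+(u_n'(z-\eta))^2}}-\frac{u_n'(z-\frac{\eta}{2})}{\sqrt{1+(u_n'(z-\frac{\eta}{2}))^2}}
  \;\ge\;\int_{z-\eta}^{z-\frac{\eta}{2}}\frac{-u_n'(x)}{\sqrt{1+(u_n'(x))^2}}\,\frac{a(x)}{\int_0^x a(t)\,dt}\,dx,
\end{equation}
whose left-hand side tends to $0$ while the right-hand side tends to $\int_{z-\eta}^{z-\eta/2}a(x)\big(\int_0^x a(t)\,dt\big)^{-1}dx>0$. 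Your proposed repairs (a matching upper bound $u_n\le C''\l_n^{1/q}$ on $[0,z-\eta]$, or a layer analysis near $x_n$) are not carried out and are not obviously easier than the lemma itself, so as written the proof of \eqref{6.22} on $[0,z-\eta]$ has a genuine gap.
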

\begin{proof}
Fix $\eta\in (0,z)$ and $x\in [0,z- \frac{\eta}{2}]$.
By \eqref{6.19}, there exists $n_0\in \NN$ such that
$
z-\frac{\eta}{2} \le x_n$,  {for all $n\ge n_0$.}
Hence, it follows that  $u_n(t)\ge  M$ for all   $n\ge n_0$ and all $t \in [0,z-\frac{\eta}{2}]$. Thus, since $u_n$ is decreasing, the composition $f(u_n(t))$ is increasing in $[0,z-\frac{\eta}{2}]$.
Consequently, integrating the differential equation in \eqref{1.1} in $[0,x]$, we find that
\begin{equation}
\label{6.23}
  \l_n f(u_n(x)) \int_0^x a(t)\,dt \ge   \int_0^x  \l_n  f(u_n(t)) \, a(t)\,dt
= \frac{-u_n'(x)}{\sqrt{1+(u_n'(x))^2}}.
 \end{equation}
Suppose that \eqref{6.22} is false. Then, for a subsequence, still labeled by $n$, we have that
\begin{equation*}
  \lim_{n\to \infty}u_n'(z-\eta)=-\infty,
\end{equation*}
which implies $\lim_{n\to \infty}u_n'(x)=-\infty$ uniformly in $[z-\eta,z]$ and hence
\begin{equation}
\label{6.24}
  \lim_{n\to \infty}  \frac{-u_n'(x)}{\sqrt{1+(u_n'(x))^2}} =1\quad \hbox{uniformly in}\;\;  [z-\eta,z].
\end{equation}
On the other hand, integrating the differential equation in $[z-\eta,z-\frac{\eta}{2}]$ yields
\[
  \frac{u_n'(z-\eta)}{\sqrt{1+(u_n'(z-\eta))^2}}-  \frac{u_n'(z-\frac{\eta}{2})}{\sqrt{1+(u_n'(z-\frac{\eta}{2}))^2}}=\int_{z-\eta}^{z-\frac{\eta}{2}}
  \l_n f(u_n(x))a(x)\,dx
\]
and then, owing to \eqref{6.23} and \eqref{6.24},
\[
  \frac{u_n'(z-\eta)}{\sqrt{1+(u_n'(z-\eta))^2}}-  \frac{u_n'(z-\frac{\eta}{2})}{\sqrt{1+(u_n'(z-\frac{\eta}{2}))^2}}\geq
  \int_{z-\eta}^{z-\frac{\eta}{2}}  \frac{-u_n'(x)}{\sqrt{1+(u_n'(x))^2}} \frac{a(x)}{\int_0^x a(t)\,dt}  \,dx.
\]
Therefore, letting $n\to \infty$ in this inequality, we find that
\[
0\ge  \int_{z-\eta}^{z-\frac{\eta}{2}}  \frac{a(x)}{\int_0^x a(t)\,dt} \,dx > 0,
\]
which is impossible. This contradiction provides us with the uniform bound for $u_n'$
on $[0,z-\eta]$.

Next, we prove \eqref{6.22}, which obviously yields a uniform bound for  $u_n'$
on $[z+\eta,1]$.
According to Lemma \ref{levi.1} and  Lemma \ref{levi.8}, we have that $\lim_{n\to \infty} u_n(x)  =0$ uniformly in $[z+\eta,z]$. Since, for every $x \in [z+\eta,1]$ and $n\geq 1$, $u_n(1)=u_n(x)+\int_x^1 u_n'(t)\,dt$, we infer that $\lim_{n\to \infty} \int_x^1 u_n'(t)\,dt =0$. Consequently,  as $u_n$ is convex on $(z,1]$, also \eqref{6.22} is proven.
\end{proof}

As a byproduct of Lemma \ref{levi.10} the next result holds.

\begin{corollary}
\label{covi.3}
Assume  $(f_2)$,  $(a_2)$ and \eqref{6.18}.
Let $\{(\l_n,u_n)\}_{n\ge1}$ be a sequence of regular strictly positive solutions of  \eqref{1.1}  satisfying \eqref{6.1} and \eqref{6.14}.
Then, the following holds:
\begin{enumerate}
\item[{\rm (i)}] for every $\eta \in (0,z)$, $\lim_{n\to \infty}\frac{u_n(x)}{u_n(0)}=1$ uniformly in
$[0,z-\eta]$;
\item[{\rm (ii)}] for every $\eta \in (0,1-z)$, $\lim_{n\to \infty}u_n(x) =  \lim_{n\to \infty}u_n'(x) = 0$ uniformly in $[z+\eta,1]$.
\end{enumerate}
\end{corollary}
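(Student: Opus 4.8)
The plan is to read off both assertions directly from Lemma \ref{levi.10}, using the blow-up of the sup-norm for the normalization in (i) and Corollary \ref{covi.2} for the vanishing in (ii). Part (ii) is immediate: Corollary \ref{covi.2} already gives $\lim_{n\to\infty}u_n(x)=0$ uniformly in $[z+\eta,1]$, while \eqref{4.21b} in Lemma \ref{levi.10} gives $\lim_{n\to\infty}u_n'(x)=0$ uniformly on the same interval. Hence (ii) requires nothing beyond invoking those two statements.

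For part (i), first recall that, under the standing assumption \eqref{6.14}, Lemma \ref{le6.2} yields \eqref{6.4}, that is $u_n(0)=\|u_n\|_{L^\infty(0,1)}\to\infty$. Now fix $\eta\in(0,z)$. By \eqref{6.22} in Lemma \ref{levi.10} there exist $C>0$ and $n_0\in\NN$ with $|u_n'(x)|\le C$ for all $x\in[0,z-\eta]$ and $n\ge n_0$. Writing $u_n(x)=u_n(0)+\int_0^x u_n'(t)\,dt$ we obtain $|u_n(0)-u_n(x)|\le Cx\le C(z-\eta)$ for all such $x$ and $n$, and therefore, dividing by $u_n(0)>0$,
\[
\left|1-\frac{u_n(x)}{u_n(0)}\right|\le\frac{C(z-\eta)}{u_n(0)}\longrightarrow 0\qquad\text{as }n\to\infty,
\]
uniformly in $x\in[0,z-\eta]$, which is precisely (i).

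The only point that genuinely uses the hypotheses of this section (and not merely the previous lemma) is the divergence $u_n(0)\to\infty$, which is exactly where \eqref{6.14} and Lemma \ref{le6.2} enter; everything else is a one-line consequence of the uniform derivative bound of Lemma \ref{levi.10}. Consequently there is no real obstacle here: this corollary is a bookkeeping summary of the pointwise and gradient estimates established in Theorems \ref{levi.7} and \ref{levi.9} and Lemma \ref{levi.10}, repackaged in normalized form.
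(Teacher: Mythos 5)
Your proof is correct and follows essentially the same route as the paper: part (i) via $u_n(x)=u_n(0)+\int_0^x u_n'(t)\,dt$, the uniform derivative bound of Lemma \ref{levi.10}, and the divergence $u_n(0)\to\infty$ (which you justify through Lemma \ref{le6.2} and \eqref{6.14}, while the paper cites Corollary \ref{covi.1} — equivalent in this context); part (ii) by combining Corollary \ref{covi.2} (itself resting on Lemmas \ref{levi.1} and \ref{levi.8}, which the paper cites directly) with \eqref{4.21b}.
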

\begin{proof}
For every $x\in [0,z-\eta]$ and $n\geq 1$, we have  that $u_n(x)=u_n(0)+\int_0^x u_n'(t)\,dt$ and hence
\begin{equation}
\label{6.25}
  \frac{u_n(x)}{u_n(0)}=1+\frac{\int_0^x u_n'(t)\,dt}{u_n(0)}.
\end{equation}
Since,  by Lemma \ref{levi.10},
\[
  \Big| \frac{\int_0^x u_n'(t)\,dt}{u_n(0)} \Big|\leq \frac{C (z-\eta)}{u_n(0)},
\]
conclusion (i) follows from Corollary \ref{covi.1} by letting $n\to \infty$ in \eqref{6.25}.
As for the proof of   (ii), the conclusion follows from  Lemma \ref{levi.1}, Lemma \ref{levi.8} and Lemma \eqref{levi.10}.
\end{proof}

At the light of  these results, for sufficiently large $\l$, the regular positive solutions of
\eqref{1.1} bounded away from zero have the profile already shown in Figure \ref{Fig01}.
\par
Although, due to Lemma \ref{levi.8}, $\lim_{n\to\infty}x_n=z$, in general it is unknown whether or not $x_n=z$. According to Corollaries \ref{covi.1} and \ref{covi.2}, the solutions grow-up to infinity in the interval $(0,z)$, whereas decay to zero on $(z,1)$.   Theorems \ref{levi.7} and  \ref{levi.9} provide us with some sharp  estimates for the growth and decay rates of $u_n$ in $(0,z)$ and $(z,1)$, respectively. According to Corollary \ref{covi.3}, the larger is $\l$ the flatter are the solutions on $(0,z)$ and $(z,1)$.

\section{Regularity versus singularity}
\label{s7}

Our aim in this section is to discuss  the existence and the non-existence of singular solutions of problem \eqref{1.1}.

\subsection{A general   regularity criterion}

\noindent Based on some ideas from  our previous papers \cite{LGO-19, LGO-JDE, LGO-20}, we  establish here a    criterion for ascertaining  the local regularity  of the bounded variation solutions of the equation
\begin{equation}
\label{7.1}
   \displaystyle
   -\left( \frac{u'}{\sqrt{1+(u')^2}}\right)' =h(x), \quad 0<x<1,
\end{equation}
under the assumption
\begin{itemize}
\it
\item[$(h_1)$] $h\in  L^1(0,1)$ and there exist  $z \in (0,1)$,  $\d_1>0$ and $\d_2 >0$  such that  $h(x)\ge 0$ a.e.  in $(z-\d_1,z)$ and $h(x) \le 0$ a.e. in $(z,z+\d_2)$.
\end{itemize}
A bounded variation solution of    \eqref{7.1}  is a function $u  \in BV(0,1)$ such that
\begin{align}
\label{hh}
\int_0^1 \frac{Du^a  D\phi^a}{\sqrt{1+ (Du^a)^2}}\,dx
+\int_0^1  \frac{Du^s}{ |Du^s | }   \, D\phi^s
=\int_0^1 h\phi\,dx
\end{align}
for every $\phi \in BV(0,1)$  with essential support in $(0,1)$ and such  that $ |D\phi ^{s}|$ is absolutely continuous with respect to  $|Du^{s}|$.
\par
From the proof of \cite[Prop. 3.6]{LGO-19} we infer that, under  $(h_1)$, every    bounded   variation  solution $u$  of \eqref{7.1}  satisfies the  following  conditions:

 \begin{itemize}
\item
$u$ is concave   in $(z-\d_1, z)$ and convex  in $(z, z+\d_2)$,
\begin{align}
  u_{|{(z-\d_1, z)}}\in W^{2,1}_{\text{\rm loc}}(z-\d_1, z)\cap W^{1,1}(z-\d_1, z),\\
  u_{|{(z,z+\d_2)}}\in W^{2,1}_{\text{\rm loc}}(z,z+\d_2)\cap W^{1,1}(z,z+\d_2),
\end{align}
and
\begin{equation}
\label{7.2}
- \left(\frac{ u' }{\sqrt{1 +{u'}^2 }}\right)' = h(x ) \; \text{ a.e. in }  (z-\d_1,z+\d_2);
\end{equation}
\item
either $u\in W^{2,1}_{\text{\rm loc}} (z-\d_1,z+\d_2)$,  or else
$$
    u(z^-) \ge  u(z^+)\quad \hbox{and}\quad u'(z^-) = -\infty =  u'(z^+),
$$
where $u'(z^-)$ and $u'(z^+)$  stand for the left and right Dini derivatives of $u$ at $z$.
 \end{itemize}

The following theorem determines   whether $u$ is regular or not,
depending on the behavior  of $h $   near its nodal point $z$;  more precisely, on the integrability properties of the function $(\int_x^z h(t)\, dt) ^{-\frac{1}{2}}$, as expressed by conditions \eqref{7.5} and \eqref{7.6} below.  Note that, under assumption $(h_1)$, the continuous function $\int_x^z h(t)\, dt $ is non-increasing in $(z-\d_1,z)$ and non-decreasing in $(z,z+\d_2)$; thus, either
\begin{equation}
\label{7.3}
\int_x^z h(t)\, dt >0 \quad \text{for all } x\in (z-\d_1,z+\d_2) \setminus \{z\},
\end{equation}
or there is $x_0\in (z-\d_1,z)$ such that
$$\int_x^z h(t)\, dt =0 \quad\text{for all }   x\in [x_0,z],$$
 or there is $x_0\in (z,z+\d_2)$ such that
 $$\int_x^z h(t)\, dt =0 \quad\text{for all }   x\in [z, x_0]. $$
 Hence,
\eqref{7.3} is complementary of
\begin{equation}
\label{7.4}
\int_{ x_0}^z h(t)\, dt =0 \quad \hbox{for some}\;\;x_0\in (z-\d_1,z+\d_2) \setminus \{z\}.
\end{equation}

\begin{theorem}
\label{th7.1}
Assume  $(h_1)$ and let $u$ be  a bounded   variation  solution of \eqref{7.1}.
 Then, the following assertions are true:
 \begin{enumerate}
\item[{\rm (a)}]
$u\in W^{2,1}_{\text{\rm loc}} (z-\d_1,z+\d_2)$ if
 \begin{equation}
\label{7.5}
\text{either}
 \;\;
\int_{z-\d_1}^z \left(\int_x^z h(t)\, dt\right) ^{-\frac{1}{2}}dx =\infty,
 \;\;  \text{or} \;\;
 \int_z^{z+\d_2} \left(\int_x^z h(t)\, dt\right) ^{-\frac{1}{2}}dx =\infty ;
\end{equation}
 \item[{\rm (b)}]
 $   u(z^-) >   u(z^+) $ if \eqref{7.3} holds and there are $x_1 \in (z-\d_1,z)$, $x_2 \in (z,z+\d_2)$ such that
\begin{equation}
\label{7.6}
\begin{array}{cc}
\displaystyle
\int_{x_1}^z \left(\int_x^z h(t)\, dt\right) ^{-\frac{1}{2}}dx <\infty,
\qquad
 \int_z^{x_2} \left(\int_x^z h(t)\, dt\right) ^{-\frac{1}{2}}dx <\infty,
 \\[2mm]
 \displaystyle
\int_{x_1}^{x_2} \left(\int_x^z h(t)\, dt\right) ^{-\frac{1}{2}}dx \leq u(x_1)
- u(x_2).
\end{array}
\end{equation}
\end{enumerate}
\end{theorem}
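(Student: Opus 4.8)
The plan is to reduce everything to an ODE analysis of the normalized slope $v:=u'/\sqrt{1+(u')^2}$, using the structural description of bounded variation solutions recalled just before the statement (from \cite[Prop.~3.6]{LGO-19}): on $(z-\d_1,z)$ the solution $u$ is concave and in $W^{2,1}_{\mathrm{loc}}\cap W^{1,1}$, on $(z,z+\d_2)$ it is convex and in $W^{2,1}_{\mathrm{loc}}\cap W^{1,1}$, identity \eqref{7.2} holds a.e., and one has the dichotomy ``$u\in W^{2,1}_{\mathrm{loc}}(z-\d_1,z+\d_2)$, or else $u(z^-)\ge u(z^+)$ and $u'(z^-)=-\infty=u'(z^+)$''. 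By \eqref{7.2}, $v'=-h$ a.e.\ on each side; since $v$ is bounded with $v'\in L^1$, it has an absolutely continuous representative up to $z$ on each side, with one-sided traces $v(z^\pm)\in[-1,1]$, and integrating $v'=-h$ gives, with $H(x):=\int_x^z h(t)\,dt\ge 0$,
\[
 v(x)=v(z^-)+H(x)\ \ (z-\d_1<x<z),\qquad v(x)=v(z^+)+H(x)\ \ (z<x<z+\d_2).
\]
Here $v(z^\pm)=-1$ precisely when $u'(z^\pm)=-\infty$ (the one-sided limits of $u'$ exist in $[-\infty,\infty)$ because $u'$ is monotone near $z$). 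Two elementary computations will be used repeatedly: $1-(-1+H)^2=H(2-H)$, so if $v(z^\pm)=-1$ then $-u'(x)=\tfrac{1-H(x)}{\sqrt{H(x)(2-H(x))}}$; and $\tfrac{1-H}{\sqrt{H(2-H)}}<\tfrac1{\sqrt H}$ whenever $0<H<2$ (clear when $1-H\le0$, and otherwise it reduces to $H^2-H-1<0$, which holds since then $H<1$).

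For part (a), I assume without loss of generality that the first integral in \eqref{7.5} diverges (the other case is symmetric, via convexity on $(z,z+\d_2)$) and argue by contradiction: if $u\notin W^{2,1}_{\mathrm{loc}}(z-\d_1,z+\d_2)$ then by the dichotomy $u'(z^-)=-\infty$, so $v(z^-)=-1$, and on $(z-\d_1,z)$ one has $H>0$ (else $v\equiv-1$ on a set of positive measure, against $u\in W^{1,1}$) and $-u'(x)=\tfrac{1-H(x)}{\sqrt{H(x)(2-H(x))}}$. Choosing $\d\in(0,\d_1)$ with $H\le\tfrac12$ on $(z-\d,z)$ (possible since $H$ is continuous and $H(z^-)=0$) gives $-u'(x)\ge\tfrac1{2\sqrt2}H(x)^{-1/2}$ there, whence $\int_{z-\d}^z H^{-1/2}\,dx<\infty$ because $u\in W^{1,1}(z-\d_1,z)$; adding $\int_{z-\d_1}^{z-\d}H^{-1/2}\,dx<\infty$ (as $H\ge H(z-\d)>0$ on $[z-\d_1,z-\d]$) contradicts the divergence in \eqref{7.5}. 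Hence $u\in W^{2,1}_{\mathrm{loc}}(z-\d_1,z+\d_2)$.

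For part (b), I would obtain $u(z^-)>u(z^+)$ directly, without separating the regular and singular cases. Under \eqref{7.3} we have $0<H(x)<2$ for every $x\in(z-\d_1,z+\d_2)\setminus\{z\}$ (positivity from \eqref{7.3}; and $v=v(z^\pm)+H\in(-1,1)$ a.e.\ forces, by continuity and monotonicity of $H$, $v(z^\pm)+H<1$, i.e.\ $H<1-v(z^\pm)\le2$, everywhere). Since $v(z^\pm)\ge-1$, $v(x)\ge-1+H(x)$ on each side, and as $s\mapsto s/\sqrt{1-s^2}$ is increasing on $(-1,1)$, the second elementary inequality gives $-u'(x)\le\tfrac{1-H(x)}{\sqrt{H(x)(2-H(x))}}<H(x)^{-1/2}$ a.e. Now, $u$ being absolutely continuous on $[x_1,z]$ and on $[z,x_2]$,
\[
 u(x_1)-u(x_2)=\int_{x_1}^z(-u')\,dx+\bigl(u(z^-)-u(z^+)\bigr)+\int_z^{x_2}(-u')\,dx,
\]
while the pointwise bound, the finiteness of $\int_{x_1}^zH^{-1/2}\,dx$ and $\int_z^{x_2}H^{-1/2}\,dx$ from \eqref{7.6}, and the last inequality of \eqref{7.6} yield
\[
 \int_{x_1}^z(-u')\,dx+\int_z^{x_2}(-u')\,dx<\int_{x_1}^{x_2}H(x)^{-1/2}\,dx\le u(x_1)-u(x_2).
\]
Comparing the two displays forces $u(z^-)-u(z^+)>0$; in particular $u$ is not regular, a $W^{2,1}_{\mathrm{loc}}$ function being continuous across $z$.

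I expect the main difficulty to be not the computation but the careful justification of the qualitative facts feeding it: that $v$ has absolutely continuous representatives up to $z$ on each side with $v'=-h$, that $v(z^\pm)=-1$ holds exactly when $u'(z^\pm)=-\infty$, and---most delicately---that $0<H(x)<2$ holds at \emph{every} point of the punctured interval rather than merely a.e., which requires combining the continuity and one-sided monotonicity of $H$ with $u'\in L^1_{\mathrm{loc}}$ to rule out $v=\pm1$ on a set of positive measure. A secondary point is the passage from the pointwise strict inequality $-u'<H^{-1/2}$ to a strict inequality between integrals; this is legitimate precisely because the finiteness conditions in \eqref{7.5}--\eqref{7.6} guarantee $H^{-1/2}\in L^1$ on the relevant intervals.
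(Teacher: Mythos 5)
Your proposal is correct and follows essentially the same route as the paper's proof: integrate the equation on each side of $z$ to express the normalized slope in terms of $H(x)=\int_x^z h(t)\,dt$, invert to bound $-u'$ above and below by constant multiples of $H^{-1/2}$, and integrate, using $u\in W^{1,1}\cap L^\infty$ for (a) and the three inequalities of \eqref{7.6} for (b). The only (harmless) deviations are that you use monotonicity of $s\mapsto s/\sqrt{1-s^2}$ in place of the paper's explicit formula \eqref{7.9}, which lets you skip its normalization $u'(x_1)\le 0$, $u'(x_2)\le 0$, and you absorb the degenerate case \eqref{7.4} into the $W^{1,1}$ contradiction instead of treating it separately.
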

It is understood   that condition  \eqref{7.5}  is    satisfied  whenever \eqref{7.4} holds.

\begin{proof}
By $(h_1)$, either \eqref{7.3}, or \eqref{7.4},  holds.  Let us  prove Part (a).
Assume \eqref{7.4} with $x_0\in (z-\d_1,z)$, the argument being similar in case  $x_0\in (z,z+\d_2)$.
Then, integrating  the equation \eqref{7.2} on $(x_0,z)$
yields
\begin{equation}
\label{7.7}
\frac{-u'(x_0) }{\sqrt{1 +(u'(x_0))^2 } } = \frac{ -u'(z^-) }{\sqrt{1 +(u'(z^-))^2 } } -  \int_{x_0}^z h(t)\, dt = \frac{ -u'(z^-) }{\sqrt{1 +(u'(z^-))^2 } }.
\end{equation}
As   $u'\in W^{1,1}_{\text{\rm loc}}(z-\d_1,z)$,  and hence $\frac{|u'(x_0)| }{\sqrt{1 +(u'(x_0))^2 } }<1$,
it follows  from \eqref{7.7}  that   $u'(z^-)$ is finite. Therefore,  $u \in W^{2,1}_{\text{\rm loc}} (z-\d_1,z+\d_2)$.  Now suppose \eqref{7.3}. Then, for every  $t\in  (z-\d_1,z)$, integrating \eqref{7.2} in $(t,z)$, we obtain that
\begin{equation}
\label{7.8}
\frac{- u'(t) }{\sqrt{1 +(u'(t))^2 } }= \frac{- u'(z^-) }{\sqrt{1 +(u'(z^-))^2 } } - \int_t^z h(s)\, ds
\end{equation}
and thus
\begin{equation}
\label{7.9}
 - u'(t)  = \frac{ \frac{ -u'(z^-) }{\sqrt{1 +(u'(z^-))^2 } } - \int_t^z h(s)\, ds}
 {\sqrt{1 -\frac{u'(z^-) }{\sqrt{1 +(u'(z^-))^2 } } - \int_t^z h(s)\, ds}}
  \frac{1}  {\sqrt{1+ \frac{u'(z^-) }{\sqrt{1 +(u'(z^-))^2 } } + \int_t^z h(s)\, ds}}.
\end{equation}
Assume   \eqref{7.5}.   Without loss of generality we can suppose  that
\begin{equation}
\label{7.10}
\int_{z-\d_1}^z \left(\int_x^z h(t)\, dt\right) ^{-\frac{1}{2}}dx =\infty,
\end{equation}
because the proof is similar if  $\int_z^{z+\d_2}\left(\int_x^z h(t)\, dt\right) ^{-\frac{1}{2}}dx =\infty$. As   the function $\int_t^z h(s)\, ds$ is continuous and positive for  $t\in(z-\d_1,z)$, the condition  \eqref{7.10} can be expressed  as
\begin{equation}
\label{7.11}
\int_x^z \left(\int_t^z h(s)\, ds\right) ^{-\frac{1}{2}}dt =\infty \quad \text{for all } x\in  (z-\d_1,z).
\end{equation}
To prove that $u\in W^{2,1}_{\text{\rm loc}} (z-\d_1,z+\d_2)$, suppose, on the contrary, that $u'(z^-)=-\infty$, that is, $ \frac{ -u'(z^-) }{\sqrt{1 +(u'(z^-))^2 }} =1$. Hence,  \eqref{7.9} implies  that, for every $x\in (z-\d_1,z)$,
\begin{equation}
\label{7.12}
 - u'(x)  = \frac{1 - \int_x^z h(t)\, dt}
{\sqrt{2 - \int_x^z h(t)\, dt} }
  \frac{1}  {\sqrt{ \int_x^z h(t)\, dt} }.
\end{equation}
 As  there exists   $\eta \in (0,\d_1)$ such that $  \int_x^z h(t)\, dt \le \frac{1}{2}$ for all $x\in (z-\eta,z)$, \eqref{7.12} implies
\begin{equation}
 - u'(x)  \ge     \frac{1}{2\sqrt{ 2}}  \left( \int_x^z h(t)\, dt \right)^{-\frac{1}{2}}.
\end{equation}
  Therefore, by \eqref{7.11}, integrating this inequality in $(z-\eta, z)$ yields
\begin{equation}
u(z-\eta) - u(z^-)   \ge  \frac{1}{2\sqrt{ 2}}    \int_{z-\eta}^z  \left( \int_x^z h(t)\, dt \right)^{-\frac{1}{2}}dx = \infty,
\end{equation}
which is a contradiction, because   $u\in L^\infty(z-\d_1,z+\d_2)$.
This ends the proof of Part (a).
\par
 In order to prove Part (b),  observe that the first two inequalities in \eqref{7.6}
are  equivalent to
\begin{equation}
\label{i6c}
\int_{z-\d_1}^{z+\d_2} \left(\int_x^z h(t)\, dt\right) ^{-\frac{1}{2}}dx < \infty.
\end{equation}
Moreover, without loss of generality we  can   suppose  that     $u'(x_1)\le0$ and $u'(x_2)\le0$. Indeed, otherwise there is  $\hat x_1\in (x_1, z)$  such that  $u(\hat x_1) \ge u(x_1)$ and $u'(\hat x_1)\le0$, or $\hat x_2\in (z,x_2)$ with $u(\hat x_2) \le u(x_2)$ and  $u'(\hat x_2) \le 0$.  Replacing $x_1$ by $\hat x_1$,  or $x_2$ by $\hat x_2$,  we are done.
\par
 We claim that, under condition \eqref{7.3},
\begin{equation}
\label{7.13}
 0\le  -u'(t) <    \left(\int_t^z h(s)\, ds\right) ^{-\frac{1}{2}}\;\; \text{ for all }  t\in[x_1,x_2]\setminus\{z\}.
\end{equation}
  Pick $t\in[x_1,z)$. Since $u'(x_1) \le0$ and  $u(x) $ is concave in $[x_1,z)$, we have that
\begin{equation}
0\le \frac{- u'(t) }{\sqrt{1 +(u'(t))^2 } } < 1
\quad \text{for all $t\in [x_1,z)$.}
\end{equation}
and
\begin{equation}
0 \le  \frac{-u'(z^-) }{\sqrt{1 +(u'(z^-))^2 }} \le 1.
\end{equation}
  Hence, by \eqref{7.3}, it follows from \eqref{7.8} that
\begin{equation}
0\le \frac{- u'(z^-) }{\sqrt{1 +(u'(z^-))^2 } } - \int_t^z h(s)\, ds <1 \quad \text{for all $t\in [x_1,z)$.}
\end{equation}
 Consequently, since $1+ \frac{ u'(z^-) }{\sqrt{1 +(u'(z^-))^2 } } \ge 0$,  the validity of \eqref{7.13} for all $t \in [x_1,z)$ follows easily from \eqref{7.9}. As the proof of \eqref{7.13} for $t\in(z,x_2]$ proceeds similarly, the technical details are omitted here.
\par
Next, pick   $x \in (z-\d_1, z+\d_2)\setminus\{z\}$. Integrating \eqref{7.13}  we obtain the estimates
\begin{align}
\label{7.14}
u(x) - u(z ^-) &< \int_x^z \left(\int_t^z h(s)\, ds\right) ^{-\frac{1}{2}}dt\;\; \text{ for all }  x\in(z-\d_1,z),
\\
\label{7.15}
u(z^+) - u(x) &< \int_z^x\left(\int_t^z h(s)\, ds\right) ^{-\frac{1}{2}}dt\; \;\text{ for all }  x\in(z,z+\d_2).
\end{align}
Taking $x=x_1$ in \eqref{7.14} and $x=x_2$ in \eqref{7.15} and adding up the two inequalities yields
$$
   u(x_1) - u(z^-) + u(z^+) - u(x_2) < \int_{x_1}^{x_2} \left(\int_x^z h(t)\, dt\right) ^{-\frac{1}{2}}dx.
$$
Thus, thanks to \eqref{7.6}, we find
$$
\int_{x_1}^{x_2}  \left(\int_t^z h(s)\, ds\right) ^{-\frac{1}{2}} dt >
   - u(z^-) + u(z^+) + \int_{x_1}^{x_2}  \left(\int_t^z h(s)\, ds\right) ^{-\frac{1}{2}} dt
$$
and therefore, $ u(z^-) > u(z^+)$, which  ends the proof.
\end{proof}

\begin{rem}
It is straightforward to see that similar conclusions hold  by  imposing  in $(h_1)$, alternatively, that
$h(x)\le 0$ a.e. in   $(z-\d_1,z)$ and $h(x) \ge 0$ a.e. in $(z,z+\d_2)$.
\end{rem}

\begin{rem}
\label{re7.2}
 The conditions   \eqref{7.5} and \eqref{7.6}  measure   the smoothness of the function $h(x)$ at the  nodal point $z$.
Indeed,   \eqref{7.5}  holds true, in particular, when $h(z^-)= $ ${\rm ess }  \lim_{x\to z^-}h(x) $ $ =0$ and $h(x)$ has a bounded slope  on the left  of $z$, or  when  $h(z^+)= $ ${\rm ess } \lim_{x\to z^+}h(x)=0$ and $h(x)$ has a bounded slope  on the right  of $z$, while   \eqref{7.5}  fails
if, for instance,  both $h(z^-)$ and $h(z^+)$  exist, are finite  and $h(z^-)>0 > h(z^+)$.
This way a classical regularity result,  requiring   the function $h(x)$ to be globally Lipschitz on $(0,1)$ (see, e.g., \cite{Giu}), is  significantly improved in the frame of equation  \eqref{7.1}.
\end{rem}

\subsection{Non-existence of singular solutions}

\noindent
A direct consequence of Theorem \ref{th7.1} is  the following  result,  which guarantees the regularity of all possible solutions of the problem \eqref{1.1} if the function $a(x) $ satisfies  $(a_2)$ and
\begin{itemize}
\it
\item[$(a_4)$]
$\displaystyle \hbox{either}\;\; \int_{0}^z \left( \int_x^za(t)\,dt\right)^{-\frac{1}{2}}dx =\infty,\;\;
\hbox{or}\;\; \int_z^{1} \left( \int_x^za(t)\,dt\right)^{-\frac{1}{2}}dx =\infty$.
\end{itemize}

\begin{theorem}
\label{th7.2}
 Assume $(a_2)$, $(a_4)$, $\l>0$,  and
\begin{itemize}
\item[$(f_4)$] $f\in \mc{C}(\R)$ satisfies  $f(u)\ge0$ if $u\ge0$.
\end{itemize}
Then, any positive solution $(\l,u)$ of \eqref{1.1} is regular.
\end{theorem}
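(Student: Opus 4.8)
The plan is to obtain Theorem~\ref{th7.2} as a direct consequence of Theorem~\ref{th7.1}(a). Let $(\l,u)$, with $\l>0$, be a positive solution of \eqref{1.1}, and set
\[
h(x)=\l\,a(x)\,f(u(x))\quad\text{for a.e. }x\in(0,1),
\]
so that, by definition, $u$ is a bounded variation solution of \eqref{7.1} with this $h$. Since $u\in BV(0,1)\subset L^\infty(0,1)$, $f\in\mc{C}(\R)$ and, by $(a_2)$, $a\in L^\infty(0,1)$, we have $h\in L^\infty(0,1)\subset L^1(0,1)$. Moreover $\essinf u\ge0$, so $u\ge0$ a.e., whence $(f_4)$ gives $f(u)\ge0$ a.e.; combined with $a>0$ a.e. on $(0,z)$ and $a<0$ a.e. on $(z,1)$ (from $(a_2)$), this shows $h\ge0$ a.e. on $(0,z)$ and $h\le0$ a.e. on $(z,1)$. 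Thus $(h_1)$ holds with $\d_1=z$ and $\d_2=1-z$.

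The crux is to verify that condition \eqref{7.5} is satisfied by this $h$. Set $N=\max_{[0,\,\|u\|_{L^\infty(0,1)}]}f$, which is finite and non-negative, and note that $0\le f(u(x))\le N$ for a.e. $x$. If \eqref{7.4} holds, then \eqref{7.5} is automatically fulfilled (this covers in particular the degenerate case $N=0$, in which $h\equiv0$). Otherwise \eqref{7.3} holds, i.e. $\int_x^z h(t)\,dt>0$ for every $x\in(0,1)\setminus\{z\}$; then, for $x\in(0,z)$, using $a>0$ and $0\le f(u)\le N$ there,
\[
0<\int_x^z h(t)\,dt=\l\int_x^z a(t)\,f(u(t))\,dt\le \l N\int_x^z a(t)\,dt,
\]
so that $\big(\int_x^z h(t)\,dt\big)^{-1/2}\ge(\l N)^{-1/2}\big(\int_x^z a(t)\,dt\big)^{-1/2}$, and, by the analogous computation on $(z,1)$ (where $\int_x^z h(t)\,dt=\l\int_z^x(-a(t))\,f(u(t))\,dt$ and $\int_x^z a(t)\,dt=\int_z^x(-a(t))\,dt>0$), the same pointwise inequality holds for every $x\in(z,1)$. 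Integrating these inequalities over $(0,z)$ and over $(z,1)$ respectively, and using $(a_4)$, at least one of the two integrals appearing in \eqref{7.5} equals $+\infty$; hence \eqref{7.5} holds.

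By Theorem~\ref{th7.1}(a), applied with $\d_1=z$ and $\d_2=1-z$, it follows that $u\in W^{2,1}_{\mathrm{loc}}(0,1)$; in particular the left Dini derivative $u'(z^-)$ is finite. On the other hand, if $u$ were singular, then by \cite[Cor. 3.7]{LGO-19} (see also the properties of bounded variation solutions of \eqref{7.1} recalled just after $(h_1)$) we would have $u'(z^-)=u'(z^+)=-\infty$, a contradiction. Therefore $u$ is a regular solution of \eqref{1.1}.

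I expect the only genuine difficulty to be the passage from the hypothesis $(a_4)$, which bears on $a(x)$ alone, to condition \eqref{7.5}, which bears on $h=\l\,a\,f(u)$ and hence involves the a priori unknown factor $f(u(\cdot))$ --- a factor that may well tend to $0$ as $x\to z$, so that no lower bound for $f(u)$ near $z$ is available. The resolution is the observation that $f(u)$ enters \eqref{7.5} only through the \emph{upper} bound $f(u)\le N$: a smaller $f(u)$ makes $\int_x^z h$ smaller, hence $\big(\int_x^z h\big)^{-1/2}$ larger, which is exactly the inequality needed to inherit the divergence of the integral in $(a_4)$. Everything else --- checking $(h_1)$, the dichotomy between \eqref{7.3} and \eqref{7.4}, and converting $W^{2,1}_{\mathrm{loc}}$-regularity into the conclusion that $u$ is regular --- is routine.
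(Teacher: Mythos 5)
Your proof is correct and follows essentially the same route as the paper's: set $h=\l a f(u)$, verify $(h_1)$, and use the bound $0\le f(u)\le N$ to transfer the divergence in $(a_4)$ to condition \eqref{7.5}, so that Theorem~\ref{th7.1}(a) applies. The paper's own proof is terser (it does not spell out the inequality $\int_x^z h\le \l N\int_x^z a$ nor the final passage from $W^{2,1}_{\mathrm{loc}}$-regularity near $z$ to regularity on all of $[0,1]$ via \cite[Cor.\ 3.7]{LGO-19}), but these are exactly the details you correctly supply.
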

\begin{proof}
Let $(\l,u)$, with $\l>0$, be a positive solution of  \eqref{1.1},   and set
$$
h(x) = \l a(x) f(u(x)) \quad \text{for a.e. } x\in (0,1).
$$
The  composite function  $ f(u(x))$ lies in  $L^\infty(0,1)$   and, by $(f_4)$,
satisfies  $   f(u(x)) \ge 0$ for a.e. $x\in(0,1)$. Thus, by $(a_2)$, $h$ satisfies $(h_1)$, with $\d_1=z$ and $\d_2=1-z$, and either  \eqref{7.4} holds for some $x_0\in [0,1] \setminus \{z\} $, or \eqref{7.3} and \eqref{7.5} hold. By Theorem \ref{th7.1} (a),    $u$ is regular.
\end{proof}

Theorem \ref{th7.2}  holds true regardless the particular behavior
of $f(u)$ at zero and at infinity: it only requires   the continuity and positivity of  $f(u)$. Thus, it is a quite general and versatile result  that applies to a large variety of situations and allows to complete and sharpen several previous statements, such as the ones obtained  in   \cite{LOR1,    LOR2, LGO-19, LGO-JDE}.
Indeed, assuming further $(a_2)$ and $(a_4)$, the results in \cite[Thms. 1.1--1.6]{LOR1}, in \cite[Thms. 5.13 and 5.14]{LGO-19} and in \cite[Thms. 1.1 and 6.1]{LGO-JDE}, combined with  Theorem \ref{th7.2}, provide the existence and the multiplicity of regular solutions. In particular, thanks to \cite[Thms. 1.1 and 6.1]{LGO-JDE} and Theorem \ref{th7.2} a wrong assertion  in   \cite[Thm. 7.1]{LOR2} can be  corrected and the  situation completely clarified. Theorem \ref{th7.2} also guarantees  that,  in the frame of Theorem \ref{thiii.4}, one has, under $(a_2)$ and $(a_4)$,
$\mc{S}_{bv}^+= \mc{S}_r^+$ and $\mf{C}_{bv,\l_0}^+=\mf{C}_{r,\l_0}^+$, as illustrated in  Figure \ref{Fig02}.

\subsection{Existence of singular solutions}

\noindent
In this section we   assume that $(a_4)$ fails, i.e.,
\begin{itemize}
\it
\item[$(a_5)$]
$\displaystyle \int_{0}^z \left( \int_x^za(t)\,dt\right)^{-\frac{1}{2}}dx <\infty\;\;
\hbox{and}\;\; \int_z^{1} \left( \int_x^za(t)\,dt\right)^{-\frac{1}{2}}dx <\infty$.
\end{itemize}
The next result shows that   \eqref{1.1} can admit singular solutions   under $(a_5)$.

\begin{theorem}
\label{th7.3}
Assume   $(a_2)$ and  $(a_5)$. Then, the following assertions are true:
\begin{enumerate}
\item[{\rm (i)}] for every $p>1$ and $q\in (0,1)$, there exists a function $f(u)$ satisfying $(f_1)$ for which \eqref{1.1} admits a singular solution $(\l_s,u_s)$ for some $\l_s>0$;
\vskip1mm
\item[{\rm (ii)}] for every $q\in (0,1)$ and $\overline \l>0$, there exist $\e>0$ and a function $f(u)$ satisfying $(f_1)$ with $p=1$ such that \eqref{1.1} admits a singular solution $(\l_s,u_s)$
  for some  $\l_s >0$ with $|\l_s- \overline  \l| <\e$.
\end{enumerate}
\begin{proof}

For any given $p>1$ and $q\in (0,1)$, let $\tilde f\in\mc{C}^1(\R)$ be such that $\tilde f(u)>0$ and $\tilde f'(u)\geq 0$ for all $u>0$ and
\begin{equation*}
  \lim_{u\to 0^+}\frac{\tilde f(u)}{u^{p}}=1
  \quad \text{and} \quad
  \lim_{u\to  \infty}\frac{\tilde f(u)}{u^{q}}=1.
\end{equation*}
Then, thanks to  \cite[Thm. 1.1]{LGO-JDE}, the auxiliary problem
\begin{equation}
\label{7.16}
   \left\{ \begin{array}{ll}
   \displaystyle
   -\left( \frac{u'}{\sqrt{1+(u')^2}}\right)' =\l a(x) \tilde f(u), & \quad 0<x<1, \\[1ex]
   u'(0)=u'(1)=0. & \end{array}\right.
\end{equation}
possesses a singular solution $(\l_s,u_s)$ for some  $ \l_s>0$. Let $M>0$ be such that
\begin{equation}
\label{7.17}
  M> u_s(0)=\|u_s\|_\infty
\end{equation}
and consider any function $f\in \mc{C}^1(\R)$ such that
\begin{equation}
\label{7.18}
  f(u) =  \left\{ \begin{array}{ll} \tilde f(u) & \quad \hbox{if}\;\; u\leq M,\\[1ex]
  \tilde g(u) & \quad \hbox{if}\;\; u > M, \end{array}\right.
\end{equation}
where $\tilde g\in \mc{C}[M,\infty)$ is any  function such that
\begin{equation}
\label{7.19}
  \lim_{u\to \infty}\frac{\tilde g(u)}{u^{-q}}= h,
\end{equation}
for some constant $h>0$.
Then, by construction, $f(u)$ satisfies $(f_1)$ and $(\l_s,u_s)$ is a singular solution of \eqref{1.1}, thus proving Part (i).
\par
To prove Part (ii) one can proceed as follows.
For any given $\overline \l>0$, let $\tilde f\in \mc{C}^1(\R)$ be any function satisfying
\begin{equation}
  \lim_{u\to 0^+}\frac{\tilde f(u)}{u }=1
    \quad \text{and} \quad
  \lim_{u\to  \infty} \tilde f(u) = \frac{1}{ {\overline \l} {\int_0^z a(x)\,dx} }.
\end{equation}
  As due to \cite[Thm. 1.1]{LGO-20} the singular solutions of \eqref{7.16}  bifurcate from infinity at $\l_\infty= \overline \l$, there exist  $\e>0$ and a singular solution  $(\l_s,u_s)$ of \eqref{7.16}  for some $\l = \l_s>0 $ with $|\l_s-\overline \l|<\e$.
\par
Let $M>0$ be satisfying \eqref{7.17} and consider any function $f(u)$ of the form \eqref{7.18} with
$\tilde g\in \mc{C}[M,\infty)$ satisfying \eqref{7.19}. Then, $f(u)$ satisfies $(f_1)$ and $(\l_s,u_s)$ is a singular solution of \eqref{1.1}.
\end{proof}
\end{theorem}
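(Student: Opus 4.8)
The plan is to reduce both statements to existence theorems for singular solutions that we already have available in our previous work — namely \cite[Thm. 1.1]{LGO-JDE} for superlinear potentials at infinity (which will drive part (i)) and \cite[Thm. 1.1]{LGO-20} on bifurcation of singular solutions from infinity for asymptotically linear potentials (which will drive part (ii)) — and then to \emph{truncate the nonlinearity at infinity} without destroying the singular solution already produced. The key observation making this work is that, under $(a_2)$, every positive singular solution $u$ of the Neumann problem is bounded, decreasing, and attains its maximum at $x=0$ (this is recalled in Section~\ref{s1} via \cite[Cor. 3.7]{LGO-19}); hence, once such a solution $u_s$ of an auxiliary problem is known, one may replace the nonlinearity above a level $M>\|u_s\|_{L^\infty(0,1)}$ by anything compatible with $(f_1)$, and $u_s$ will remain a (singular) solution of the modified problem, since $f(u_s(x))$ is left unchanged for a.e. $x$.

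For part (i), I would fix $p>1$ and $q\in(0,1)$ and first choose an auxiliary $\tilde f\in\mc{C}^1(\R)$ with $\tilde f>0$ and $\tilde f'\ge 0$ on $(0,\infty)$, $\tilde f(u)/u^{p}\to 1$ as $u\to 0^+$, and $\tilde f(u)/u^{q}\to 1$ as $u\to\infty$; since $q+1>1$, the associated potential $\tilde F$ is superlinear at infinity, so under $(a_2)$ and $(a_5)$ the hypotheses of \cite[Thm. 1.1]{LGO-JDE} are met and the auxiliary problem \eqref{7.16} possesses a singular solution $(\l_s,u_s)$ with $\l_s>0$. Then I would pick $M>u_s(0)=\|u_s\|_{L^\infty(0,1)}$ as in \eqref{7.17} and glue: set $f=\tilde f$ on $[0,M]$ and $f=\tilde g$ on $(M,\infty)$, where $\tilde g$ is any $\mc{C}^1$-matching continuation with $\tilde g(u)/u^{-q}\to h$ for some $h>0$, as in \eqref{7.18}--\eqref{7.19}. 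One checks that $f$ satisfies $(f_1)$ — the behaviour at $0$ is inherited from $\tilde f$, while now $F$ is sublinear at infinity — and that $f(u_s(x))=\tilde f(u_s(x))$ for a.e. $x$, so $(\l_s,u_s)$ solves \eqref{1.1} and is singular, which is exactly (i).

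For part (ii) I would run the same scheme with $p=1$ but start from an \emph{asymptotically linear} auxiliary nonlinearity. Given $\overline\l>0$, choose $\tilde f\in\mc{C}^1(\R)$ with $\tilde f(u)/u\to 1$ as $u\to 0^+$ and $\tilde f(u)\to\big(\overline\l\int_0^z a(x)\,dx\big)^{-1}$ as $u\to\infty$ (note $\int_0^z a>0$ under $(a_2)$, so this limit is a positive constant); the bifurcation-from-infinity result \cite[Thm. 1.1]{LGO-20} then yields singular solutions of \eqref{7.16} emanating from infinity at $\l_\infty=\overline\l$, so there are $\e>0$ and a singular solution $(\l_s,u_s)$ of \eqref{7.16} with $|\l_s-\overline\l|<\e$. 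Truncating above $M>\|u_s\|_{L^\infty(0,1)}$ exactly as before — replacing $\tilde f$ by some $\tilde g$ with $\tilde g(u)/u^{-q}\to h>0$ — produces an $f$ satisfying $(f_1)$ with $p=1$ for which $(\l_s,u_s)$ is a singular solution of \eqref{1.1}.

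The substantive content lives entirely in the two cited existence theorems; the main obstacle here is purely a matter of bookkeeping, namely verifying that the spliced $f$ still obeys all of $(f_1)$ (continuity and the $\mc{C}^1$-match across $u=M$, positivity on $(0,\infty)$, the unchanged limit at $0$, and the prescribed limit $h>0$ at $\infty$) and, crucially, that $M$ can genuinely be taken strictly above $\|u_s\|_{L^\infty(0,1)}$ — which is precisely where the a priori structure of singular solutions under $(a_2)$ (boundedness, monotonicity, maximum at the left endpoint) enters. No new a priori estimate or degree computation is needed beyond what has been quoted.
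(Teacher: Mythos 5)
Your proposal is correct and follows essentially the same route as the paper: invoke \cite[Thm. 1.1]{LGO-JDE} (for part (i)) and the bifurcation-from-infinity result \cite[Thm. 1.1]{LGO-20} (for part (ii)) to obtain a singular solution of an auxiliary problem, then truncate the nonlinearity above a level $M>\|u_s\|_{L^\infty(0,1)}$ so that the modified $f$ satisfies $(f_1)$ while $u_s$ remains a singular solution. The only addition you make is to spell out why $\|u_s\|_{L^\infty(0,1)}=u_s(0)$ is finite via the structure of singular solutions under $(a_2)$, which the paper leaves implicit.
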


When $a(x)$ has a jump discontinuity at $z$, that is,
$$
 \limess{x\to z^-}a(x)>0>  \limess{x\to z^+}a(x),
$$
  our next result shows  that the problem \eqref{1.1}  cannot admit regular solutions separated away from zero for sufficiently large $\l>0$.

\begin{theorem}
\label{th7.4}
Assume  $(f_2)$,  $(a_2)$,
\begin{itemize}
\it
\item[$(a_6)$] there exist constants $A>0$, $B>0$ and $\eta>0$ such that
\begin{align}
a(x) \ge A \; \text{ for a.e. } x\in (z-\eta, z) \quad \text{ and } \quad
a(x) \le -B \; \text{ for a.e. } x\in (z, z+\eta),
\end{align}
\end{itemize}
and \eqref{6.18}.
Then, the problem \eqref{1.1} cannot admit regular solutions separated away from zero
for sufficiently large $\l>0$.
\end{theorem}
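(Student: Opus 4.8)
The plan is to argue by contradiction and appeal to the local regularity criterion of Theorem~\ref{th7.1}. Suppose there is a sequence $\{(\l_n,u_n)\}_{n\ge1}$ of regular positive solutions of \eqref{1.1} with $\l_n>0$, $\l_n\to\infty$ and $\liminf_{n\to\infty}u_n(0)>0$, i.e. satisfying \eqref{6.1} and \eqref{6.14}. Then all the results of Section~\ref{s6} apply: by \eqref{6.14} and Lemma~\ref{le6.2} one has $u_n(0)=\|u_n\|_{L^\infty(0,1)}\to\infty$; Lemma~\ref{le4.4} gives the unique $x_n\in(0,x_\o)$ with $u_n(x_n)=M$, and Lemma~\ref{le6.5} together with Lemma~\ref{levi.8} (whose hypothesis \eqref{6.18} is assumed here) yields $x_n\to z$; finally Corollaries~\ref{covi.1} and \ref{covi.2} give $u_n\to\infty$ uniformly on $[0,z-\eta]$ and $u_n\to0$ uniformly on $[z+\eta,1]$ for every small $\eta>0$. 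I would also record that a regular positive solution $u_n$, which is decreasing on $[0,1]$ by $(a_2)$, cannot vanish at $z$: if $u_n(z)=0$, then $u_n\equiv0$ on $[z,1]$, hence $u_n'(z)=0$, and integrating the equation over $(0,z)$ gives $0=\l_n\int_0^z a(x)f(u_n(x))\,dx>0$, a contradiction; thus $u_n(z)>0$ for every $n$.

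Set $h_n(x)=\l_n a(x)f(u_n(x))$, so that each $u_n$ is a bounded variation solution of \eqref{7.1} with $h=h_n$. By $(a_2)$, $(a_6)$ and $f\ge0$ (contained in $(f_1)$), $h_n$ satisfies $(h_1)$ with $\d_1=z$, $\d_2=1-z$; moreover $\int_x^z h_n(t)\,dt>0$ for every $x\in(0,1)\setminus\{z\}$, since $a>0$ and $f(u_n)>0$ a.e. on $(0,z)$ while $a<0$ a.e. on $(z,1)$ and $u_n(z)>0$, so that \eqref{7.3} holds. The goal is now to exhibit points $x_1\in(z-\eta_0,z)$ and $x_2\in(z,z+\eta_0)$, with $\eta_0>0$ the constant appearing in $(a_6)$, for which the three conditions in \eqref{7.6} hold for $n$ large; Theorem~\ref{th7.1}(b) then forces $u_n(z^-)>u_n(z^+)$, which is impossible because $u_n\in W^{2,1}(0,1)\subset\mc{C}^1[0,1]$ is continuous at $z$.

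The choice I would make is $x_1=x_{1,n}$ with $u_n(x_{1,n})=2M$ (so $x_{1,n}\to z$) and $x_2=x_{2,n}=z+2u_n(z)/L_n$, writing $L_n:=|u_n'(z^-)|=|u_n'(z^+)|$; then $u_n(x_{1,n})-u_n(x_{2,n})\to2M>0$, so the last inequality in \eqref{7.6} will follow once the integral there is shown to tend to $0$. Two ingredients drive the estimates. First, the jump condition $(a_6)$ forces the relevant lengths to be $O(1/\l_n)$: integrating the curvature equation over $(x_{1,n},x_n)$ and over $(x_n,z)$, and using $a\ge A$ to the left of $z$, $|\psi_n|<1$ with $\psi_n:=-u_n'/\sqrt{1+(u_n')^2}$, and the chord estimate $u_n(t)\ge M(z-t)/(z-x_n)$ on $(x_n,z)$ coming from the concavity of $u_n$ on $(0,z)$ (whence $\int_{x_n}^z f(u_n)\ge(z-x_n)F(M)/M$), one gets $z-x_n=O(1/\l_n)$ and $x_n-x_{1,n}=O(1/\l_n)$; in particular $L_n\gtrsim\l_n$, since $L_n(z-x_n)\ge M-u_n(z)$ by concavity. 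Second, since $u_n\ge0$ and, by \eqref{1.3}–\eqref{1.4}, $u_n''\le\l_n\|a\|_\infty f(u_n(z))(1+L_n^2)^{3/2}$ near $z$ on the right, comparing $u_n$ with the parabola $u_n(z)-L_n s+\tfrac12\l_n\|a\|_\infty f(u_n(z))(1+L_n^2)^{3/2}s^2$ yields the constraint $\l_n\,u_n(z)^{p+1}L_n\gtrsim1$ (using $f(s)\sim s^p$ as $s\to0^+$, cf. \eqref{1.2}). Feeding these facts into lower bounds for $\int_x^z h_n$ on $(x_{1,n},x_n)$ (where $f(u_n)\ge f(2M)>0$), on $(x_n,z)$ (via the chord estimate and $f(s)\ge c_0 s^p$ on $(0,M]$), and on $(z,x_{2,n})$ (via the tangent bound $u_n(z+s)\ge u_n(z)-L_n s$, again $f(s)\ge c_0 s^p$, and the constraint above, which is exactly what lets the factor $L_n$ be absorbed), a direct computation gives
\[
\int_{x_{1,n}}^{x_{2,n}}\Big(\int_x^z h_n(t)\,dt\Big)^{-\frac12}dx = O\!\left(\tfrac1{\l_n}+u_n(z)\right)\longrightarrow0 ,
\]
so \eqref{7.6} holds for $n$ large and the contradiction follows.

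The main obstacle is the lower bound for $\int_x^z h_n$ close to $z$: there $u_n(x)$ is near $u_n(z)$, which may tend to $0$ (for subquadratic potentials, $0<p<1$, $u_n$ may even acquire a dead core just to the right of $z$), so $f(u_n)$ is small and $(\int_x^z h_n)^{-1/2}$ is large. The resolution is to offset this by the smallness of the interval of integration: the localization $z-x_n=O(1/\l_n)$ forces $L_n=|u_n'(z)|$ to be at least of order $\l_n$, the choice $x_{2,n}=z+2u_n(z)/L_n$ keeps the integration to the right of $z$ confined to the scale $u_n(z)/L_n$, and on that scale the convexity–plus–positivity constraint $\l_n u_n(z)^{p+1}L_n\gtrsim1$ prevents $\int_x^z h_n$ from being too small; a symmetric use of the concavity of $u_n$ on $(0,z)$ handles the piece $(x_n,z)$. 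Everything else is bookkeeping resting on Section~\ref{s6} and on Theorem~\ref{th7.1}.
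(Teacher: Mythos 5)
Your strategy is genuinely different from the paper's: you try to verify hypothesis \eqref{7.6} of Theorem \ref{th7.1}(b) for $h_n=\l_n a f(u_n)$ and conclude that $u_n$ must jump at $z$, contradicting regularity, whereas the paper integrates the identity $\big(1/\sqrt{1+(u_n')^2}\big)'=\l_n a f(u_n)u_n'$ over $[z-\eta,z+\eta]$ to obtain the flux balance $A\int_{u_n(z)}^{u_n(z-\eta)}f\le \|a\|_{L^\infty}\int_{u_n(z+\eta)}^{u_n(z)}f+O(\l_n^{-1})$ and then lets $n\to\infty$ to reach $A\int_M^\infty f\le \|a\|_{L^\infty}\int_0^M f<\infty$, which contradicts $\int_M^\infty f=\infty$ (this is exactly where $q\in(0,1)$ enters). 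The paper's route is shorter and makes the role of the sublinearity at infinity transparent; yours would, if completed, give a somewhat more geometric explanation of why regularity fails.

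There is, however, a genuine gap in your estimates, and it traces back to a fact you missed: under $(a_6)$ one necessarily has $u_n(z)\to M$. (This is the first step of the paper's proof: integrating $\big(1/\sqrt{1+(u_n')^2}\big)'=\l_n a f(u_n)u_n'$ over the interval between $z$ and $x_n$ and using $a\ge A$ or $a\le -B$ gives $\int_M^{u_n(z)}f=O(\l_n^{-1})$.) You instead build the argument around the scenario $u_n(z)\to 0$: the final displayed bound $O(\l_n^{-1}+u_n(z))\to 0$ is false as a limit statement, since $u_n(z)\to M>0$; the claim $u_n(x_{1,n})-u_n(x_{2,n})\to 2M$ fails for the same reason (one only gets a lower bound close to $M$, since $u_n(x_{2,n})\le u_n(z)\to M$); and the justification of $L_n\gtrsim\l_n$ via $L_n(z-x_n)\ge M-u_n(z)$ collapses because $M-u_n(z)\to 0$. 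The approach is salvageable, but only after first proving $u_n(z)\to M$ — which is essentially half of the paper's proof — after which $f(u_n)$ is bounded below by a positive constant on $(x_{1,n},z)$ and on $(z,z+u_n(z)/(2L_n))$, $z-x_{1,n}=O(\l_n^{-1})$, and $L_n\to\infty$ (e.g.\ from Theorem \ref{levi.7} and concavity), so the two pieces of the integral in \eqref{7.6} are $O(\l_n^{-1})+O\big((\l_n L_n)^{-1/2}\big)\to 0$, eventually below $u_n(x_{1,n})-u_n(x_{2,n})\ge M-\e$. As written, the constants in your $O(\l_n^{-1}+u_n(z))$ bound are uncontrolled, so the comparison with $u_n(x_1)-u_n(x_2)$ required by \eqref{7.6} does not close.
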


  By  Remark \ref{re7.2},  $(a_6)$ implies $(a_5)$.
We conjecture   that, more generally,  Theorem \ref{th7.4} remains true if  $a(x)$ satisfies $(a_5)$  instead of $(a_6)$.

\begin{proof}
Assume, by contradiction, that \eqref{1.1} possesses a sequence of positive regular solutions, $\{(\l_n,u_n)\}_{n\geq 1}$,  such that
\begin{equation}
\label{7.20}
\lim_{n\to \infty} \l_n = \infty
\quad \text{and} \quad
\liminf_{n\to \infty} u_n(0) >0.
\end{equation}
By Lemma \ref{le4.4}, for sufficiently large $n$ there exists a unique $x_n\in (0,1)$ such that $u_n(x_n)=M$. From Lemma \ref{levi.8}, we   know that $\lim_{n\to \infty}x_n=z$.
Without   loss of generality, we can  suppose that, for every $n\ge1 $,
$x_n\in (z-\eta, z+\eta)$. We claim that, in addition,
\begin{equation}
\label{7.21}
   \lim_{n\to \infty} u_n(z)=M.
\end{equation}
To prove this, we will distinguish, for each $n$, between two different cases: either $x_n>z$, or $x_n\le z$.
Suppose that $x_n>z$. Then, integrating in $[z,x_n]$ the
identity
\begin{equation}
\label{7.22}
\left( \frac{1}{\sqrt{1+(u'_n(x))^2}}\right)'=\l_n a(x)f(u_n(x))u_n'(x)
\end{equation}
yields
\[
 \int_z^{x_n}a(x)f(u_n(x))u'_n(x)\,dx =
  \frac{1}{\l_n}\left( \frac{1}{\sqrt{1+(u_n'(x_n))^2}}-\frac{1}{\sqrt{1+(u_n'(z))^2}}\right)
  \le \frac{1}{\l_n}.
\]
Thus, we infer from the first limit in \eqref{7.20} that
\begin{equation}
\label{7.23}
  \lim_{n\to \infty} \int_z^{x_n}a(x)f(u_n(x))u'_n(x)\,dx=0.
\end{equation}
Moreover, we have that
\begin{align*}
\int_z^{x_n}a(x)f(u_n(x))u'_n(x)\,dx & =\int_z^{x_n}(-a(x))f(u_n(x))(-u'_n(x))\,dx
\\ &
\geq
B\int_z^{x_n}f(u_n(x))(-u'_n(x))\,dx
= B \int_M^{u_n(z)}f(s)\,ds.
\end{align*}
Consequently,
the following inequalities hold
\begin{equation}
 0\le B\int_M^{u_n(z)}f(s)\,ds \leq \int_z^{x_n}a(x)f(u_n(x))u'_n(x)\,dx.
\end{equation}
Similarly, if $x_n\le z$,  we  can obtain  that
\begin{equation}
0\le A\int^M_{u_n(z)}f(s)\,ds \leq -\int^z_{x_n}a(x)f(u_n(x))u'_n(x)\,dx.
\end{equation}
Since $A>0$ and $B>0$,
from \eqref{7.23} we can  conclude  that $\lim_{n\to \infty} \int_M^{u_n(z)}f(s)\,ds=0$. Therefore, \eqref{7.21} holds.  Next, integrating \eqref{7.22} in $[z-\eta,z+\eta]$ yields
\[
\int_{z-\eta}^{z+\eta}a(x)f(u_n(x))u'_n(x)\,dx = O(\l_n^{-1})\quad \hbox{as}\;\; n\to \infty,
\]
or, equivalently,
\[
  \int_{z-\eta}^{z}a(x)f(u_n(x))(-u'_n(x))\,dx =-\int_{z}^{z+\eta}a(x)f(u_n(x))(-u'_n(x))\,dx + O(\l_n^{-1}).
\]
  Thus,  arguing as above, we get
\[
 A  \int_{u_n(z)}^{u_n(z-\eta)}f(s)\,ds \leq
  \|a\|_{L^\infty(0,1)}
 \int_{u_n(z+\eta)}^{u_n(z)}f(s)\,ds +O(\l_n^{-1}).
\]
Therefore, letting $n\to \infty$ in this   estimate, \eqref{7.21} and Corollaries \ref{covi.1} and  \ref{covi.2} imply
\begin{equation}
\label{7.24}
 A \int_{M}^{\infty}f(s)\,ds \leq
   \|a\|_{L^\infty(0,1)} \int_{0}^{M}f(s)\,ds<\infty.
\end{equation}
Since  $A>0$,
\eqref{7.24} entails $\int_{M}^{\infty}f(s)\,ds<\infty$,  which is impossible, as the assumption $q\in (0,1)$, made in $(f_1)$,  implies that $\int_M^\infty f(s) \,ds =\infty$. This contradiction shows that \eqref{1.1} cannot admit,  for large $\l>0$, positive regular solutions separated away from $0$.
\end{proof}

Under conditions $(f_1)$ and $(a_2)$, the existence of solutions separated away from zero for sufficiently large $\l>0$  is guaranteed, thanks to Lemma  \ref{le6.2}, when $p\in (0,1)$  by \cite[Thm. 1.2]{LOR1}, or when $p=1$ by \cite[Thm. 1.4,  Rem. 1.9]{LOR1},
Whereas in case $p>1$, thanks to \cite[Thm. 1.5]{LOR1}, it is  known that  \eqref{1.1} admits, at least, two positive solutions for sufficiently large $\l>0$.
According to Theorem \ref{th7.4}, under conditions $(f_2)$, $(a_2)$ and $(a_6)$, all the  solutions of \eqref{1.1} for sufficiently large $\l>0$ are singular, except the ones perturbing from zero, whose existence was  discussed in Section \ref{s5}.

\begin{rem}
The proof of Theorem \ref{th7.4} actually provides us
with singular solutions as $\l \to \infty$ for a much wider
family  of functions $f(u)$ than those satisfying $(f_1)$. Indeed, to fix ideas suppose that
\begin{equation}
\label{7.25}
  a(x)=\left\{ \begin{array}{ll} A & \quad \hbox{in}\;\; [0,z),\\
  -B & \quad \hbox{in}\;\; (z,1],\end{array}\right.
\end{equation}
for two positive constants, $A, B>0$, such that
\[
  \int_0^1 a(x)\,dx = Az-B(1-z)<0=(A+B)z-B<0.
\]
Let $\{(\l_n,u_n)\}_{n\geq 1}$ be a sequence of positive regular solutions of \eqref{1.1}
satisfying \eqref{7.20}.
Then, integrating in $[0,1]$ the identity \eqref{7.22} yields $\int_0^1 a(x)f(u_n(x))u'_n(x)\,dx=0$
for all $n\geq 1$. Thus,
\[
  \int_0^za(x)f(u_n(x))u'_n(x)\,dx =-\int_z^1 a(x)f(u_n(x))u'_n(x)\,dx
\]
and hence, by \eqref{7.25},    we find that, for every $n\geq 1$,
\begin{equation}
\label{7.26}
  A \int_{u_n(z)}^{u_n(0)}f(s)\,ds = B \int_{u_n(1)}^{u_n(z)}f(s)\,ds.
\end{equation}
Consequently, letting $n\to \infty$,  from  the analysis   done in Section \ref{s6}, we infer that
\begin{equation}
\label{7.27}
  A \int_{M}^\infty f(s)\,ds = B \int_0^M f(s)\,ds.
\end{equation}
Therefore, \eqref{7.27} is necessary in order  that \eqref{1.1} can admit a regular solution
for sufficiently large $\l>0$. Obviously, it fails to be true when $f(u) $ satisfies $(f_1)$ with $q\in (0,1)$. Yet,  even when $f(u)$ has a sufficiently fast decay at
infinity so that $\int_M^\infty f(s)\,ds<\infty$, the identity \eqref{7.27} will never be satisfied, unless
\begin{equation*}
   B= \frac{\int_{M}^\infty f(s)\,ds}{\int_0^M f(s)\,ds} A.
\end{equation*}
In all these cases, it is possible to show that \eqref{1.1} cannot admit a regular solution for sufficiently large $\l>0$, regardless the decay rate of $f(u)$ at infinity. This result sharpens Theorem \ref{th7.4} in the special case   when  $a(x)$ satisfies \eqref{7.25}.
\par
Note that if $f(u)$ and $a(x)$ satisfy $(f_1)$ and $(a_2)$, then the identity \eqref{7.26}
restricts the size of $u_n(0)=\|u_n\|_{L^\infty(0,1)}$ so that $(\l_n,u_n)$ can be a regular solution of \eqref{1.1}. Thus, under these assumptions,
any sufficiently large solution must be singular.
\end{rem}

More generally, when $a(x)$ satisfies $(a_5)$,  instead of  $(a_6)$, the next result holds.

\begin{proposition}
\label{pr6.1}
Assume  $(f_1)$, $(a_2)$, $(a_5)$ and \eqref{6.18}. Let $\{(\l_n,u_n)\}_{n\geq 1}$ be a sequence of positive solutions of \eqref{1.1} satisfying \eqref{7.20}.
Suppose, in addition, that there exist constants $\eta>0$ and $C>0$ such that
\begin{equation}
\label{7.28}
  \l_n f(u_n(x))\geq C \quad \hbox{if}\;\; 0<|x-z|<\eta.
\end{equation}
Then, for sufficiently large $n$, $(\l_n,u_n)$ is a singular  solution of \eqref{1.1} with $u_n(z^-) > u_n(z^+)$.
\end{proposition}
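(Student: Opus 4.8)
The plan is to apply the regularity criterion of Theorem~\ref{th7.1}, part~(b), to the function $h_n(x):=\l_n a(x)f(u_n(x))$ for $n$ large: its conclusion $u_n(z^-)>u_n(z^+)$ forces $u_n\notin W^{2,1}(0,1)$, hence $(\l_n,u_n)$ is singular with the claimed jump. So the whole task is to verify, for $n$ large, the hypotheses of~(b). By $(a_2)$, $a>0$ a.e.\ on $(0,z)$ and $a<0$ a.e.\ on $(z,1)$; since $u_n$ is a positive solution and $f$ satisfies $(f_1)$ one has $f(u_n)\ge0$; and \eqref{7.28} gives $\l_n f(u_n(x))\ge C>0$ on a punctured $\eta$-neighbourhood of $z$. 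Hence $h_n$ satisfies $(h_1)$ with $\d_1=z$, $\d_2=1-z$, and, writing $h_n(x)=a(x)\bigl[\l_n f(u_n(x))\bigr]$, one sees that $h_n>0$ a.e.\ on $(z-\eta,z)$, $h_n<0$ a.e.\ on $(z,z+\eta)$, and $h_n\ge0$ (resp.\ $\le0$) a.e.\ on the whole of $(0,z)$ (resp.\ $(z,1)$); this yields $\int_x^z h_n(t)\,dt>0$ for every $x\in(0,1)\setminus\{z\}$, i.e.\ \eqref{7.3} holds and~(b) is the relevant alternative.

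To control the integrals in \eqref{7.6} I would use \eqref{7.28} once more: for $x$ in $(z-\eta,z)$ or in $(z,z+\eta)$,
\[
\int_x^z h_n(t)\,dt=\l_n\int_x^z a(t)f(u_n(t))\,dt\ \ge\ C\int_x^z a(t)\,dt\ >\ 0,
\]
the last integral being positive for $x$ near $z$ by $(a_2)$. Hence, for any admissible pair $x_1\in(z-\eta,z)$, $x_2\in(z,z+\eta)$,
\[
\int_{x_1}^{x_2}\Bigl(\int_x^z h_n(t)\,dt\Bigr)^{-1/2}dx\ \le\ C^{-1/2}\int_{x_1}^{x_2}\Bigl(\int_x^z a(t)\,dt\Bigr)^{-1/2}dx\ =:\ K(x_1,x_2),
\]
and $(a_5)$ makes $K(x_1,x_2)$ finite (so the first two conditions of \eqref{7.6} hold) and, importantly, independent of $n$. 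Thus the proof reduces to the last inequality in \eqref{7.6}: it suffices to fix one pair $(x_1,x_2)$ as above and to show $u_n(x_1)-u_n(x_2)\ge K(x_1,x_2)$ for all large $n$.

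For the left endpoint this is easy: by \eqref{7.20} and Lemma~\ref{le6.2}, $u_n(0)\to\infty$, and since $u_n$ is concave on $[0,z)$ with $u_n(z^-)\ge0$ the chord inequality gives $u_n(x_1)\ge\tfrac{z-x_1}{z}\,u_n(0)\to\infty$. For the right endpoint I would show that $u_n(x_2)$ remains bounded; in fact $u_n(x)\to0$ for every $x\in(z,1)$. By Lemma~\ref{levi.1} and monotonicity of $u_n$ there is a threshold $c\in[z,1]$ with $u_n(x)\to\infty$ for $x<c$ and $u_n(x)\to0$ for $x>c$. If $c>z$ (and, say, $c\le x_\o$), pick $x^-\in(z,c)$ and $x^+\in(c,x_\o)$ with $-a\ge c_0>0$ a.e.\ on $[x^-,x^+]$ (possible by \eqref{6.18}); integrating the identity $\bigl(1/\sqrt{1+(u_n')^2}\bigr)'=\l_n a(x)f(u_n)u_n'$ over $[z,1]$ (legitimate because $u_n$ has first derivative in $L^1(z,1)$) and using the change of variable $s=u_n(t)$ one gets
\[
1\ \ge\ \l_n\int_z^1(-a)\,f(u_n)\,(-u_n')\,dt\ \ge\ \l_n c_0\int_{u_n(x^+)}^{u_n(x^-)}f(s)\,ds\ \longrightarrow\ \infty,
\]
because $u_n(x^-)\to\infty$, $u_n(x^+)\to0$ and $\int_M^{\infty}f=\infty$ (here $q\in(0,1)$ in $(f_1)$ is used, exactly as in the proof of Theorem~\ref{th7.4}); this contradiction forces $c=z$. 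Consequently $u_n(x_1)-u_n(x_2)\to\infty$, so it eventually exceeds the fixed number $K(x_1,x_2)$, \eqref{7.6} holds, and Theorem~\ref{th7.1}(b) yields $u_n(z^-)>u_n(z^+)$, completing the argument.

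The step I expect to be the real obstacle is precisely this last one: controlling the behaviour of $u_n$ to the right of $z$ with only $(f_1)$ at hand, since the monotonicity of $f$ (available under $(f_2)$ and heavily used in Section~\ref{s6}) is missing. The argument above disposes of the case $c\le x_\o$ cleanly; the remaining possibility $c\in(x_\o,1]$ must be excluded separately, exploiting the balance $\int_0^1 a(x)f(u_n(x))\,dx=0$ together with the fact that $x_\o$ is the zero of $x\mapsto\int_0^x a$, so as to force $u_n$ to drop below $M$ inside a subinterval of $(z,1)$ on which $a$ is bounded away from $0$, and then invoking once more the change of variable $s=u_n(t)$ and $\int_M^\infty f=\infty$.
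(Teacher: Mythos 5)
Your overall strategy coincides with the paper's: both proofs apply Theorem~\ref{th7.1}(b) to $h_n(x)=\l_n a(x)f(u_n(x))$, use \eqref{7.28} together with $(a_5)$ to obtain the $n$\emph{-independent} bound
$\int \bigl(\int_x^z h_n\bigr)^{-1/2}dx \le C^{-1/2}\int\bigl(\int_x^z a\bigr)^{-1/2}dx<\infty$,
and then reduce everything to showing that the drop $u_n(x_1)-u_n(x_2)$ across $z$ eventually dominates that fixed constant. Your verification of $(h_1)$, of \eqref{7.3}, and of the first two conditions in \eqref{7.6} is exactly what the paper does. The only divergence is in the last step: the paper simply quotes Corollaries~\ref{covi.1} and~\ref{covi.2} to get $u_n(z-\eta)\to\infty$ and $u_n(z+\eta)\to 0$, and concludes.

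That last step is also where your proposal is incomplete, and you say so yourself: your direct argument disposes of the blow-up on the left (concavity plus $u_n(0)\to\infty$ is fine and matches Corollary~\ref{covi.1}) and of the case where the threshold $c$ of the dichotomy lies in $(z,x_\o]$, but the case $c\in(x_\o,1]$ is only sketched, so as written the proof does not close. You should be aware, however, that your diagnosis of \emph{why} this is delicate is accurate and in fact points at a weakness of the statement itself: Corollaries~\ref{covi.1} and~\ref{covi.2} (and the underlying Lemmas~\ref{levi.3}--\ref{levi.8}, which use the identity \eqref{6.13} and the monotonicity of $f$ on $(M,\infty)$) are proved under $(f_2)$, whereas the Proposition assumes only $(f_1)$; so the paper's own proof tacitly relies on the stronger hypothesis. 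If you are willing to invoke those corollaries as the paper does — equivalently, to read the hypothesis as $(f_2)$ — your argument terminates at once and is the paper's proof; if you insist on $(f_1)$ alone, you must still rule out that $u_n\to\infty$ on all of $[0,1]$ (where your $s=u_n(t)$ substitution yields no contradiction because $\int_{u_n(x^+)}^{u_n(x^-)}f$ need not blow up when both limits diverge), and that residual case is a genuine gap in the proposal.
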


\begin{proof}
Let us set, for every $n\ge1$,
\begin{equation}
h_n(x) = \l_na(x) f(u_n(x)) \quad   \text{for a.e. } x\in [0,1].
\end{equation}
For each $n$, the function $h_n(x) $ satisfies  assumption $(h_1)$ and
\begin{equation}
\int_x^z h_n(t)\, dt \ge C \int_x^z a(t) \, dt >0  \quad \hbox{if}\;\; 0<|x-z|<\eta.
\end{equation}
In addition, $\int_x^z h_n(t)\, dt >0$ for all $x\in [0,1] \setminus \{z\}$,   and hence
\begin{equation}
\int_{z-\eta}^{z+\eta} \left(\int_x^z h_n(t)\, dt\right) ^{-\frac{1}{2}}dx
\leq
 \frac{1}{\sqrt{C}} \int_{z-\eta}^{z+\eta}  \left( \int_x^z a(t)\,dt \right)^{-\frac{1}{2}} dx < \infty.
\end{equation}
Moreover,   by  Corollaries \ref{covi.1} and  \ref{covi.2}, we already  know that
$\lim_{n\to\infty}u_n(z-\eta) = \infty$ and $\lim_{n\to\infty}u_n(z+\eta) =0$, which implies that, for
sufficiently large $n$,
\begin{equation}
u_n(z-\eta) - u_n(z+\eta) \ge \int_{z-\eta}^{z+\eta} \left(\int_x^z h_n(t)\, dt\right) ^{-\frac{1}{2}}dx .
\end{equation}
 Therefore, the conclusion can be inferred from Theorem \ref{th7.1} (b).
 \end{proof}

\begin{rem}
By the definition of $x_n$, we have that
$$
  \lim_{n\to \infty} \left( \l_n f(u_n(x_n))\right)=\lim_{n\to \infty} \left( \l_n f(M)\right)=\infty
$$
and, due to Lemma \ref{levi.8}, $\lim_{n\to \infty}x_n=z$. Thus, the condition \eqref{7.28} seems rather natural to hold. Unfortunately, we were not able to
exclude the existence of some sequence $\{y_{n_k}\}_{k\geq 1}$
such that $\lim_{k\to \infty} y_{n_k}=z$ and $\lim_{k\to \infty}\left( \l_{n_k} f(u_{n_k}(y_{n_k}))\right) = 0$ in the general case when $a(x)$ satisfies $(a_5)$.
  So, it remains an open problem to characterize the existence of positive singular solutions of \eqref{1.1} when $F(u)$  is sublinear at infinity, unlike what we were able to do  in \cite{LGO-JDE, LGO-20} for potentials which are linear or superlinear at infinity.
\end{rem}


\begin{thebibliography}{99}


\bibitem{Amann}
    H. Amann,
    Fixed point equations and nonlinear eigenvalue problems in ordered Banach spaces,
    \emph{SIAM Rev.} \textbf{18} (1976), 620--709.

\bibitem{AL98}
    H. Amann and J. L\'{o}pez-G\'{o}mez,
    A priori bounds and multiple solutions for superlinear indefinite elliptic problems,
    \emph{ J. Differential Equations} \textbf{146} (1998), 336--374.

\bibitem{AmFuPa}
  L. Ambrosio, N. Fusco and D. Pallara,
\emph{Functions of Bounded Variation and Free Discontinuity Problems},
Clarendon Press, Oxford, 2000.

\bibitem{Anz83}
 G. Anzellotti,
 The Euler equation for functionals with linear growth,
\emph{Trans. Amer. Math. Soc.} \textbf{290}  (1985),   483--501.

\bibitem{BDGM}
 E. Bombieri, E. De Giorgi and M. Miranda, Una maggiorazione a priori relativa alle ipersuperfici minimali non parametriche,
\emph{Arch. Ration. Mech. Anal.} \textbf{32} (1969), 255--267.


\bibitem{BoHaObOmJDE} D. Bonheure, P. Habets, F. Obersnel and P. Omari,
Classical and non-classical solutions of a prescribed curvature equation,
 \emph{J. Differential Equations} \textbf{243} (2007),  208--237.


\bibitem{BoHaObOmTS} D. Bonheure, P. Habets, F. Obersnel and P. Omari,
Classical and non-classical positive solutions of a prescribed curvature equation with singularities,   \emph{Rend. Istit. Mat. Univ. Trieste} \textbf{39} (2007), 63--85.

\bibitem{BL}
    K.J. Brown and S.S. Lin,
    On the existence of positive eigenfunctions for an eigenvalue problem with indefinite weight function,
   \emph{J. Math. Anal. Appl.}   \textbf{75} (1980), 112--120.

    \bibitem{BuGr}
M. Burns and  M. Grinfeld,
Steady state solutions of a bi-stable quasi-linear equation with saturating flux,
\emph{European J. Appl. Math.} \textbf{22} (2011),   317--331.




\bibitem{CaDMLePa}  M. Carriero, G. Dal Maso, A.  Leaci and E. Pascali,
 Relaxation of the nonparametric Plateau problem with an obstacle,
 \emph{J. Math. Pures Appl.} \textbf{67} (1988),   359--396.


\bibitem{CMM96}
P. Cl\'ement, R. Man\'asevich and E. Mitidieri,
On a modified capillary equation,
 \emph{J. Differential Equations} \textbf{124} (1996), 343--358.




\bibitem{CZ91}
C.V. Coffman and W.K. Ziemer,
A prescribed mean curvature problem on domains without radial symmetry,
 \emph{SIAM J. Math. Anal.} \textbf{22} (1991), 982--990.


\bibitem{CF}
P. Concus  and  R. Finn,
On a class of capillary surfaces,
 \emph{J. Analyse Math.}  \textbf{23} (1970), 65--70.



\bibitem{CDCO}
C. Corsato, C. De Coster and P. Omari,
The Dirichlet problem for a prescribed anisotropic mean curvature equation: existence, uniqueness and regularity of solutions,
\emph{J. Differential Equations}   \textbf{260} (2016), 4572--4618.



\bibitem{CDCOOS}
C. Corsato, C. De Coster,  F. Obersnel, P. Omari and A. Soranzo,
 A prescribed anisotropic mean curvature equation modeling the corneal shape: a paradigm of nonlinear analysis,
\emph{Discrete Contin. Dyn. Syst. Ser. S}  \textbf{11} (2018),  213--256.


\bibitem{COZ}
C. Corsato, P. Omari and F. Zanolin,
Subharmonic solutions of the prescribed curvature equation
\emph{Commun. Contemp. Math.}   \textbf{18} (2016), 1550042, 1--33.



\bibitem{CR}
     M. G. Crandall and P. H. Rabinowitz,
     Bifurcation from simple eigenvalues,
     \emph{J. Funct. Anal.} \textbf{8} (1971), 321--340.


\bibitem{DCH}
  C. De Coster  and P.  Habets,
 {\em Two-point boundary value problems: lower and upper solutions,}
 Elsevier, Amsterdam, 2006.

 \bibitem{Em}
M. Emmer, Esistenza, unicit\`a e regolarit\`a  nelle superfici di equilibrio nei capillari,
\emph{Ann. Univ. Ferrara} \textbf{18} (1973), 79--94.



\bibitem{FZ}
G. Feltrin and F. Zanolin,
Multiplicity of positive periodic solutions in the superlinear indefinite case via coincidence degree, \emph{J. Differential Equations} \textbf{262} (2017), 4255-4291.


\bibitem{FLG}    M. Fencl and J. L\'{o}pez-G\'{o}mez, Global bifurcation diagrams of
positive solutions for a class of 1-D superlinear indefinite problems, arXiv:2005.09369;19.05.2020.


\bibitem{FRLG}
   S. Fern\'andez-Rinc\'on and J. L\'opez-G\'omez,
   Spatially heterogeneous Lotka-Volterra competition,
   \emph{Nonlinear Anal.} \textbf{165} (2017), 33--79.

\bibitem{FRLGB}
S. Fern\'andez-Rinc\'on and J. L\'opez-G\'omez, The Picone identity: A device to get optimal uniqueness results and global dynamics in Population Dynamics, arVix:1911.05066;20.11.2019.


\bibitem{Fi80}
R. Finn, The sessile liquid drop. I. Symmetric case,
\emph{Pacific J. Math.} \textbf{88} (1980), 541--587.


\bibitem{Fi}
R. Finn, \emph{Equilibrium Capillary Surfaces}, Springer, New York, 1986.



\bibitem{Ge}
C. Gerhardt, Boundary value problems for surfaces of prescribed mean curvature,
\emph{J. Math. Pures Appl.} \textbf{58} (1979),  75--109.

\bibitem{Ge85}
C. Gerhardt,
Global $C^{1,1}$-regularity for solutions of quasilinear variational inequalities,
\emph{Arch. Ration. Mech. Anal.} \textbf{89} (1985),  83--92.



\bibitem{Giu}
E. Giusti, Boundary value problems for non-parametric surfaces of prescribed mean curvature,
\emph{Ann. Sc. Norm. Super. Pisa Cl. Sci.}   \textbf{3} (1976),  501--548.


\bibitem{GL00}
 R. G\'{o}mez-Re\~{n}asco and J. L\'{o}pez-G\'{o}mez,
 The effect of varying coefficients on the dynamics of a class of
superlinear indefinite reaction diffusion equations,
 \emph{J. Differential Equations}  \textbf{167} (2000), 36--72.

\bibitem{GL01}
 R. G\'{o}mez-Re\~{n}asco and  J. L\'{o}pez-G\'{o}mez,
 The uniqueness of the stable positive solution for a class of
superlinear indefinite reaction diffusion equations,
 \emph{Differential Integral Equations} \textbf{14} (2001), 751--768.

 \bibitem{GMT}
E. Gonzalez, U. Massari and I. Tamanini,
Existence and regularity for the problem of a pendent liquid drop,
\emph{Pacific J. Math.} \textbf{88} (1980),  399--420.



\bibitem{Hu85}
G. Huisken,
Capillary surfaces over obstacles,
\emph{Pacific J. Math.}   \textbf{117} (1985),  121--141.


\bibitem{KuRo}  A. Kurganov and P.  Rosenau,
 On reaction processes with saturating diffusion,
 \emph{Nonlinearity} \textbf{19} (2006), 171--193.


\bibitem{LU}
O.A. Ladyzhenskaya and N.N. Ural'tseva,  Local estimates for gradients of solutions of non-uniformly elliptic and parabolic equations,
\emph{Comm. Pure Appl. Math.} \textbf{23} (1970), 677--703.


\bibitem{Le}
V.K. Le,
Some existence results of non-trivial solutions of the precribed mean curvature equation,
  \emph{Adv. Nonlinear Stud.} {\bf 5} (2005), 133--161.

\bibitem{LS}
    J. Leray and J. Schauder,
    Topologie et \'equations fonctionelles,
    \emph{Ann. Sci. \'Ec. Norm. Sup\'er.} \textbf{51} (1934), 45--78.



\bibitem{LG01}
    J. L\'{o}pez-G\'{o}mez,
    \emph{Spectral Theory and Nonlinear Functional Analysis},
   Chapman and Hall/CRC Press, Boca Raton, 2001.


\bibitem{LG13}
          J. L\'{o}pez-G\'{o}mez,
          \emph{Linear Second Order Elliptic Operators},
          World Scientific, Singapore, 2013.



\bibitem{LGO-19}
     J. L\'opez-G\'omez and P. Omari, Global components of positive bounded variation solutions of a one-dimensional indefinite quasilinear Neumann problem, \emph{Adv. Nonlinear Stud.} \textbf{19} (2019),
437--473.

\bibitem{LGO-JDE} J. L\'opez-G\'omez and P. Omari, Characterizing the formation of singularities in a
superlinear indefinite problem related to the mean curvature operator,
\emph{J. Differential Equations}   \textbf{269} (2020), 1544--1570.

\bibitem{LGO-20} J. L\'opez-G\'omez and P. Omari, Singular versus regular solutions in a quaslilinear indefinite problem with an asymptotically linear potential, \emph{Adv. Nonlinear Stud.} \textbf{20} (2020), 557--578.

\bibitem{LOR1}
     J. L\'opez-G\'omez, P. Omari and S. Rivetti,
     Positive solutions of one-dimensional indefinite capillarity-type problems
     \emph{J. Differential Equations}  \textbf{262} (2017), 2335--2392.

\bibitem{LOR2}
 J. L\'opez-G\'omez, P. Omari and S. Rivetti,
 Bifurcation of positive solutions for a one-dimensional indefinite quasilinear Neumann problem,
 \emph{Nonlinear Anal.} \textbf{155} (2017), 1--51.


\bibitem{Ma}
M.  Marzocchi,
Multiple solutions of quasilinear equations involving an area-type term,
 \emph{J. Math. Anal. Appl.}  {\bf 196} (1995), 1093--1104.



\bibitem{Na}
 M.  Nakao,
 A bifurcation problem for a
quasi-linear elliptic boundary value problem,
\emph{Nonlinear Anal.}  \textbf{14} (1990), 251--262.

\bibitem{NS2}
W.M. Ni and J. Serrin,
Existence and non-existence theorems for quasilinear  partial differential
equations. The anomalous case,
\emph{Accad. Naz. Lincei - Atti dei Convegni} \textbf{77} (1985),  231--257.


\bibitem{ObOmDIE}
F. Obersnel and P. Omari, Existence and multiplicity results for the prescribed mean curvature equation via lower and upper solutions,
\emph{ Differential Integral Equations} \textbf{22} (2009),  853--880.


 \bibitem{ObOmDCDS}
F. Obersnel and  P. Omari, Existence, regularity and boundary behaviour of bounded variation solutions of a one-dimensional capillarity equation, \emph{Discrete Contin. Dyn. Syst.} \textbf{33} (2013),   305--320.


 \bibitem{ObOm20}
F. Obersnel and  P. Omari, Revisiting the sub- and super-solution method for the classical radial solutions of the mean curvature equation, \emph{Open Math.} \textbf{18} (2020),   1185--1205.


\bibitem{ObOmRiNA}
F. Obersnel, P. Omari and S. Rivetti,
Existence, regularity and stability properties of periodic solutions of a capillarity equation in the presence of lower and upper solutions,
\emph{Nonlinear Anal. Real World Appl.} \textbf{13}  (2012), 2830--2852.


\bibitem{ObOmRi}
F. Obersnel, P. Omari and S. Rivetti, Asymmetric Poincar\'e inequalities and solvability of capillarity problems,
\emph{J. Funct. Anal.} \textbf{267} (2014),  842--900.



\bibitem{Picone}
     M. Picone,
    Sui valori eccezionali di un parametro da cui dipende un'equazione differenziale ordinaria del second'ordine,
    \emph{Ann. Sc. Norm. Super. Pisa Cl. Sci.} \textbf{11} (1910), 1--144.


\bibitem{Rabgb}
     P.H. Rabinowitz,
     Some global results for nonlinear eigenvalue problems,
     \emph{J. Funct. Anal.} \textbf{7} (1971), 487--513.

\bibitem{Ra71}
P.H.\ Rabinowitz, A global theorem for nonlinear eigenvalue problems and applications, in  \emph{Contributions to Nonlinear Functional Analysis,}
Proc. Sympos. Math. Res. Center, Univ. Wisconsin, Madison,
 Academic Press, New York (1971),  11-36.

 \bibitem{Se}
J. Serrin, The problem of Dirichlet for quasilinear elliptic differential equations with many independent variables,
\emph{Philos. Trans. Roy. Soc. London} \textbf{264} (1969) 413--496.

\bibitem{Se88}
J. Serrin,
Positive solutions of a prescribed mean curvature problem,
in {\it Calculus of variations and partial differential equations} Trento 1986,
Lecture Notes in Math. \textbf{1340}, Springer, Berlin  (1988).


\bibitem{Te}
R. Temam, Solutions g\'en\'eralis\'ees de certaines \'equations du type hypersurfaces minima,
\emph{Arch. Ration. Mech. Anal.} \textbf{44} (1971/72), 121--156.


\end{thebibliography}
\end{document}